\documentclass{amsart}
\usepackage[utf8]{inputenc}
\usepackage{amsmath,calligra,mathrsfs}
\usepackage[margin=2.5cm]{geometry}
\usepackage[english]{babel}
\usepackage{csquotes}
\usepackage[utf8]{inputenc} 
\usepackage[T1]{fontenc} 
\usepackage{amssymb,amsthm,mathtools,dsfont}
\usepackage{hyperref}
\usepackage{xcolor}
\hypersetup{
    colorlinks = true,
    linkcolor={red!80},
    citecolor={green!60!black},
    urlcolor={blue!80!black},
    pdfauthor=author
}
\usepackage[final]{microtype}
\usepackage{comment,mdframed,multicol}
\usepackage{array}
\usepackage{stmaryrd}
\usepackage{caption}
\usepackage{subcaption}
\usepackage[pdftex]{graphicx}
\usepackage{tipa}
\usepackage{adjustbox}
\usepackage{mathdots}
\usepackage{leftindex}
\usepackage{braket}
\usepackage[shortlabels]{enumitem}
\numberwithin{equation}{section}

\theoremstyle{definition}
\newtheorem{defn}{Definition}[section]
\newtheorem{ex}[defn]{Example}

\theoremstyle{plain}
\newtheorem{thm}[defn]{Theorem}
\newtheorem{lem}[defn]{Lemma}
\newtheorem{cor}[defn]{Corollary}
\newtheorem{prop}[defn]{Proposition}

\newtheorem{conj}[defn]{Conjecture}
\newtheorem*{conj*}{Conjecture}
\newtheorem{thmx}{Theorem}
\newtheorem{corx}[thmx]{Corollary}

\theoremstyle{remark}
\newtheorem{rmk}[defn]{Remark}

\newcommand{\Z}{\mathbb Z}
\newcommand{\Q}{\mathbb Q}
\newcommand{\R}{\mathbb R}
\newcommand{\C}{\mathbb C}
\newcommand{\K}{\mathbb K}

\def\GL{\mathop{\rm GL}\nolimits}

\def\cA{{\mathcal A}}

\def\cC{{\mathcal C}}
\def\cD{{\mathcal D}}

\def\cH{{\mathcal H}}

\def\cO{{\mathcal O}}
\def\cP{{\mathcal P}}

\def\Eff{\mathop{\rm Eff}\nolimits}
\def\Amp{\mathop{\rm Amp}\nolimits}

\def\Nef{\mathop{\rm Nef}\nolimits}
\def\Mov{\mathop{\rm Mov}\nolimits}

\def\Pic{\mathop{\rm Pic}\nolimits}

\def\bPic{\mathop{\rm \textbf{Pic}}\nolimits}

\def\clMov{\mathop{\overline{\rm Mov}}\nolimits}

\def\Aut{\mathop{\rm Aut}\nolimits}
\def\Spec{\mathop{\rm Spec}\nolimits}
\def\psaut{\mathop{\rm PsAut}\nolimits}
\def\bsloc{\mathop{\rm Bs}\nolimits}
\def\Supp{\mathop{\rm Supp}\nolimits}
\def\etale{\mathop{\rm \acute{e}t}\nolimits}
\def\Gal{\mathop{\rm Gal}\nolimits}
\def\Brauer{\mathop{\rm Br}\nolimits}

\newcommand{\id}{\text{id}}
\newcommand{\codim}{\operatorname{codim}}
\newcommand{\Hom}{\text{Hom}}
\newcommand{\coker}{\text{coker} ~}

\newcommand{\abs}[1]{\left\lvert#1\right\rvert}
\newcommand{\enstq}[2]{\left\{#1~\middle|~#2\right\}}

\usepackage[all]{xy}
\usepackage{tikz-cd}

\usepackage[
maxbibnames=99,
maxalphanames=4,
backend=biber,
sorting=nyt,
giveninits=true,
useprefix=true
]{biblatex}
\renewbibmacro{in:}{}

\newcommand{\Addresses}{{% additional braces for segregating \footnotesize
  \bigskip
  \footnotesize

  ~~\textsc{Institut Élie Cartan de Lorraine, Université de Lorraine et CNRS, F-54000 Nancy, France.}\par\nopagebreak
  ~~~~\textit{E-mail address}: \texttt{aurelien.faucher@univ-lorraine.fr}
}}
\title[The relative cone conjecture for fibrations in varieties with well-clipped cones]{The relative movable cone conjecture for K-trivial fibrations in varieties with well-clipped movable cones}
\author{Aurelien Faucher}
\date{\today}

\begin{document}

\begin{abstract}
We prove the weak relative Kawamata–Morrison movable cone conjecture for $K$-trivial fibrations whose very general fibre is a quotient, by a finite group of automorphisms acting freely in codimension one, of a product of certain Calabi-Yau pairs whose underlying varieties have well-clipped movable cones, a notion recently introduced by Cécile Gachet. Our main result applies in particular when the fibre is a finite product of an abelian variety, smooth rational surfaces underlying klt Calabi-Yau pairs, projective irreducible holomorphic symplectic manifolds and Enriques manifolds, both of a known type. As a consequence, there are only finitely many minimal models over the base, up to isomorphism. When the relative movable cone is non-degenerate, we obtain the full relative movable cone conjecture.
\end{abstract}

\maketitle

\section{Introduction}
It is well known that the birational geometry of a variety can be studied using cones. To cite just one of the most famous theorems in this regard, the Cone Theorem states that the $K_X$-negative part of the Mori cone of a smooth projective variety $X$ over a field $F$ is locally rational polyhedral and, provided that the characteristic of $F$ is zero, any extremal $K_X$-negative ray corresponds to a contraction morphism \cite[Theorem 3.7]{Kollár_Mori_1998}. This theorem is particularly useful when the $K_X$-negative part of the Mori cone coincides with the entire cone, e.g. for Fano varieties, as in this case we then know that the cone of curves is rational polyhedral. For a projective variety $X$ with a trivial canonical divisor, the $K_X$-positive part of this cone is not, in general, as easily describable since it may well have a circular part. This is the case, for example, for abelian surfaces of Picard rank at least three, or for certain K3 surfaces. Even if we no longer have such an explicit description, the Kawamata-Morrison cone conjecture predicts the existence of a rational polyhedral fundamental domain for the cone of effective nef classes under the action of the automorphism group of $X$. Over time, the statement of this conjecture \cite{morrison96beyond} has been greatly generalised, and following \cite{kawamata97cone} and \cite{totaro10conepairs}, we state a relative version for pairs. The various notions appearing in the following statements are standard and will be recalled in Section \ref{sect_birgeom}.

\begin{defn}\label{defn_weakfund}
    Let $G$ be a group with identity element $e$ acting on a topological space $X$. A \emph{weak fundamental domain} for $X$ under the action of $G$ is a subset $\cD \subset X$ satisfying the following conditions:
    \begin{enumerate}
        \item $X = \bigcup_{g \in G} g\cD$.
        \item For each $g \in G$, either $g\cD = \cD$ or $g\cD \cap \cD^{\circ} = \emptyset$, where $\cD^\circ$ is the interior of $\cD$.
    \end{enumerate}
    If the only element $g \in G$ satisfying $g\cD = \cD$ is $e$, then $\cD$ is called a \emph{fundamental domain}.
\end{defn}

\begin{conj}[The (weak) Kawamata-Morrison Cone conjecture]\label{conj_relKM}
Let $f : (X,\Delta) \to S$ be a klt $K$-trivial fiber space.

\begin{enumerate}
\item There exists a rational polyhedral cone $\Pi$ which is a (weak) fundamental domain under the action of the group of relative automorphisms $\Aut(X/S,\Delta)$ preserving the boundary on 
\[\Nef^+(X/S) := \operatorname{Conv}\left(\Nef(X/S) \cap N^1(X/S)_\Q \right).\]

\item There exists a rational polyhedral cone $\Pi'$ which is a (weak) fundamental domain under the action of the group of relative pseudoautomorphisms $\psaut(X/S,\Delta)$ preserving the boundary on 
\[\Mov^+(X/S) := \operatorname{Conv}\left( \clMov(X/S) \cap N^1(X/S)_\Q \right).\]
\end{enumerate}
\end{conj}

\begin{defn}
	With the notations introduced above, item $(1)$ and item $(2)$ will be, respectively, referred to as \emph{the (weak) relative nef cone conjecture} and \emph{the (weak) relative movable cone conjecture} for $(X,\Delta)$ over $S$. When $S$ is the spectrum of $\C$, thoses items will simply be referred to as the \emph{absolute (weak) movable cone conjecture} and the \emph{absolute (weak) nef cone conjecture}.
\end{defn}

It is also common to encounter formulations of this conjecture for the effective nef cone $\Nef^e(X/S) := \Nef(X/S) \cap \Eff(X/S)$ and the effective (closed) movable cone $\clMov^e(X/S) := \clMov(X/S) \cap \Eff(X/S)$. For a more comprehensive overview, we refer to \cite{LOP2018ConeConjecture}. In addition to the motivation of obtaining a description of the Mori cone of $K$-trivial varieties analogous to that for Fano varieties as discussed above, this conjecture leads to profound consequences for the birational geometry of $K$-trivial varieties. In particular, it implies the finiteness of small $\Q$-factorial modifications; see, for instance, \cite{gachet2024effective}.

The conjecture in the absolute setting has been established in a number of cases, but remains open in full generality. The case of smooth surfaces is covered by the work of Sterk \cite{sterk1985finiteness} (inspired by ideas of Looijenga), Namikawa \cite{Namikawa1985} and Kawamata \cite{kawamata97cone}. The case of klt surface pairs was proved by Totaro \cite{totaro10conepairs}. Prendergast-Smith proved the cone conjecture for abelian varieties over an algebraically closed field \cite{Prendergast-Smith2012}. The works of Markman \cite{markman2011survey}, Markman–Yoshioka \cite{markman2015proof} and Amerik–Verbitsky \cite{amerik2017morrison, amerik2020collections} establish the conjecture for irreducible holomorphic symplectic manifolds (for projective $\Q$-factorial primitive symplectic varieties $X$ with $b_2(X) \ge 5$, see \cite{lehn2024morrison}, and for projective irreducible symplectic orbifolds $Y$ with $b_2(Y) = 4$, see \cite{menet2020k}). Unfortunately, despite the efforts of many mathematicians, the case of Calabi–Yau manifolds remains open even in dimension three. For recent progress, we refer to \cite{gachet2024nef}, \cite{lutz2024morrison} and the references therein. By the Beauville-Bogomolov decomposition theorem \cite{beauville1983varietes, bogomolov1974decomposition}, the last three types of varieties mentioned play a central role in the classification of $K$-trivial varieties: they form the basic building blocks from which any $K$-trivial manifold arises up to an étale cover. While the conjecture is known for at least two of these three building blocks, there is currently no general descent result along étale coverings. Nevertheless, substantial progress has been made by Gachet in \cite{gachet2025well}. To mention just one of her results, the following was one of the main motivations for this work.

\begin{thm}[Theorem 1.4 - \cite{gachet2025well}]\label{thm_cecile}
    Let $(X,\Delta)$ be a klt Calabi-Yau pair defined over $\C$ that decomposes as
    \[X = A \times \prod_{i=1}^r Y_i \times \prod_{j=1}^s S_j, ~~\Delta = \sum_{j=1}^s p_j^*\Delta_j, \]
    where $A$ is an abelian variety, each $Y_i$ is a primitive symplectic variety with canonical singularities and $b_2(Y_i) \geq 5$, and each $S_j$ is a smooth rational surface underlying a klt Calabi-Yau pair $(S_j,\Delta_j)$. Then the movable cone conjecture holds for every quotient pair $(X/G, \Delta_G)$ of $(X,\Delta)$ by a finite subgroup $G$ of $\Aut(X,\Delta)$. Moreover, there are finitely many isomorphism classes of pairs $(Y,\Delta_Y)$ obtained by small $\Q$-factorial modifications of $(X/G,\Delta_G)$, and the nef cone conjecture holds for each of them.
\end{thm}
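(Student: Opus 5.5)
The plan follows Gachet's strategy: prove the cone conjecture on the product, where the movable cone splits into factors, and then descend it to the quotient through the notion of well-clipped movable cones.

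\textbf{Step 1: the product $X$.} Here $h^1(\cO_{S_j})=0$ and each $Y_i$ has $h^1(\cO)=0$. So $N^1(X)$ decomposes as $N^1(A)\oplus\bigoplus_i N^1(Y_i)\oplus\bigoplus_j N^1(S_j)$, up to the relevant Künneth identifications; correspondences between distinct factors vanish because the non-abelian factors have no $H^1$. From this we expect the movable cone to be the product
\[
\clMov(X)=\Nef(A)\times\prod_i\clMov(Y_i)\times\prod_j\clMov(S_j),
\]
with $\clMov(S_j)=\Nef(S_j)$ since $S_j$ is a surface. The pseudoautomorphism group contains $\Aut(A)\times\prod_i\psaut(Y_i)\times\prod_j\Aut(S_j,\Delta_j)$ with finite index. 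We then combine the known results factor by factor:
\begin{itemize}
\item Prendergast-Smith's theorem for $A$;
\item Lehn--Mongardi--Pacienza for primitive symplectic $Y_i$ with $b_2\ge5$;
\item Totaro's theorem for the klt surface pairs $(S_j,\Delta_j)$.
\end{itemize}
The product of the resulting rational polyhedral fundamental domains is a rational polyhedral fundamental domain for the product group.

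\textbf{Step 2: descent to $X/G$.} Since $G$ is finite, $N^1(X/G)_\R\cong N^1(X)^G_\R$ and $\clMov(X/G)=\clMov(X)^G$. Pseudoautomorphisms of $X/G$ lift, after passing to a finite-index subgroup, to elements of the normalizer $N(G)$ in $\psaut(X,\Delta)$. The task is therefore to produce a rational polyhedral fundamental domain for $N(G)$, or for the centralizer, acting on the $G$-invariant slice of $\Mov^+(X)$. Slicing a fundamental domain for the full group does not do this, because the invariant subspace may be degenerate with respect to the chamber structure. This is exactly where the well-clipped property is needed. The cone is cut out by locally finitely many rational hyperplanes: the chamber walls of the symplectic factors and the $(-2)$- and $(-1)$-curves on the $S_j$, while the abelian factor is homogeneous. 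We then apply a Looijenga-type criterion to the restricted action. It requires a rational polyhedral cone $\Pi\subset\Mov^+(X)^G$ with $N(G)\cdot\Pi\supseteq\Mov^{\circ}(X)^G$. We obtain $\Pi$ by taking the $G$-average of a fundamental domain from Step 1 and intersecting with the invariant subspace.

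\textbf{Step 3: finiteness.} Finiteness of isomorphism classes of small $\Q$-factorial modifications follows from the movable cone statement. The nef chambers meeting $\Pi$ are finite in number by local polyhedrality, and each chamber yields one model up to pseudoautomorphism. The nef cone conjecture for each model then follows by applying the same descent to the stabilizer of that chamber.

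\textbf{Main obstacle.} The hard step is Step 2. We must show that the arithmetic group $N(G)$ acting on the invariant subspace is still large enough to have a rational polyhedral fundamental domain, and that the averaged domain covers the invariant movable cone. I would first try to prove that the image of $N(G)$ in $\GL(N^1(X)^G)$ is commensurable with the arithmetic stabilizer of the invariant lattice in each factor. For the lattice factors ($A$, $Y_i$) this comes from the Torelli-type descriptions. For the surface factors it would come from the Weyl-group description of $\Aut(S_j,\Delta_j)$ modulo finite groups.
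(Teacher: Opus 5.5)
Your overall plan matches how the paper uses this statement. The paper only cites it, but Step~2 of the proof of Theorem~\ref{thm_A} runs the same chain. It splits $N^1$ and $\clMov$ for the product, applies the factor-wise cone conjectures, and gets the cone conjecture for the product from \cite[Corollary~2.18]{gachet2025well}. It then shows $\clMov(X)$ is well-clipped as a direct sum of well-clipped cones \cite[Lemma~3.5]{gachet2025well}. Finally it descends via \cite[Theorem~1.6]{gachet2025well}, applied once the cone conjecture for $X$ and well-clippedness of $\clMov(X)$ are known. The descent uses $N^1(X/G)_\R\cong N^1(X)^G_\R$, $\Mov(X/G)=\Mov(X)\cap N^1(X/G)_\R$, and the fact that the invariant slice of a well-clipped cone is well-clipped \cite[Proposition~4.5]{gachet2025well}.

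The genuine gap is your Step~2, which is the only step not covered by known results: the averaged cone gives no covering. Put $\rho=\frac{1}{|G|}\sum_{g\in G}g$ and take $x\in\Mov(X)^G$. You can pick $\gamma\in\psaut(X,\Delta)$ with $\gamma x\in\Pi$. Since $gx=x$, you get $\rho(\gamma x)=\frac{1}{|G|}\sum_{g\in G}(g\gamma g^{-1})x$, an average of several points of the orbit of $x$. This has no reason to lie in the $N(G)$-orbit of $x$ unless $\gamma$ normalizes $G$, which is the same obstruction you identified for slicing. Moreover, by Proposition~\ref{prop_poltype} and Looijenga's lemma, the inclusion $N(G)\cdot\rho(\Pi)\supseteq\Mov(X)^G$ is already the whole conclusion. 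Averaging only names a candidate domain. No general principle is known that transfers polyhedral type from $(C,\Gamma)$ to $(C^G,N_\Gamma(G))$, which is why a hypothesis like well-clippedness is needed.

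Well-clippedness is also not ``cut out by locally finitely many rational hyperplanes''; that property alone would not help. What matters is the following:
the ambient cone $\cA$ is self-dual homogeneous, with the clipped summands hyperbolic and defined over $\Q$;
each wall is the mirror of a $q$-reflection preserving $V_\Z$;
negative normals of distinct walls satisfy $q(e_i,e_k)\ge 0$.
These crystallographic, non-obtuse conditions make $\cC$ a chamber of a discrete reflection group inside $\Aut(\cA,V_\Z)$. That lets one pass to an arithmetic group acting on a self-dual homogeneous cone, where restricting to $G$-fixed parts is under control, and then return to the clipped cone. Your commensurability idea points in this direction, but the domain then comes from that structure, not from averaging a domain upstairs.

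Two further points. The direction you need is descent of elements of $N(G)$ to $\psaut(X/G,\Delta_G)$, which is automatic, not lifting from $X/G$. And Step~3 is too quick: the nef-chamber decomposition is a priori locally finite only in the big cone. As soon as $X$ has two factors, $\Pi$ contains non-big classes (translates of pullbacks of ample classes from a single factor). So finiteness of the chambers meeting $\Pi$ needs a separate argument.
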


Less is known in the relative setting. We mention the work of Kawamata in dimension three \cite{kawamata97cone}, as well as the works \cite{li2025relative, li2023relative} and \cite{moraga2024geometric}, which establish weak versions of the relative cone conjecture for fibrations in surfaces. For a fibration $X \to S$ whose very general fibre is a projective irreducible holomorphic symplectic manifold satisfying the abundance conjecture, the relative movable cone conjecture is established in \cite[Theorems 1.3 and 1.4]{horing2024relative}. In particular, the conjecture holds whenever the very general fibre belongs to one of the known deformation classes. Under similar assumptions, the relative cone conjecture was also proved in \cite{faucher2025cone} for fibrations whose very general fibre $X_s$ is a projective primitive symplectic variety with $\Q$-factorial terminal singularities and $b_2(X_s) \ge 5$; see also \cite{fu2025finiteness}. This result also follows from \cite{gachet2025well}.

The main theorem of this paper establishes the weak relative movable cone conjecture for $\Q$-factorial terminal $K$-trivial fibrations whose very general fibre is a quotient, by a finite group of automorphisms acting freely in codimension one, of a product of certain klt Calabi-Yau pairs for which the movable cone conjecture holds and whose closed movable cone is well-clipped and contained in the effective cone.
\begin{thmx}\label{thm_A}
    Let $f : (X,\Delta) \to S$ be a $K$-trivial fibration between quasi-projective normal varieties such that $(X,\Delta)$ is a $\Q$-factorial terminal pair. Assume that the very general fibre $(X_s,\Delta_s := \Delta \vert_{X_s})$ is isomorphic to $(W/G,\Delta_{W/G})$, where $W = A \times \prod_{i=1}^p Y_i \times \prod_{j=1}^q S_j$ is a product of 
    \begin{itemize}
        \item [(i)] an abelian variety $A$,
        \item [(ii)] projective primitive symplectic varieties $Y_i$ with $\Q$-factorial terminal singularities such that $b_2(Y_i) \geq 5$, $\Mov^+(Y_i) \subset \Eff(Y_i)$, and for every klt pair $(Y_i,D_i)$, any $(K_{Y_i} + D_i)$-MMP terminates with a good minimal model,
        \item [(iii)] smooth rational surfaces $S_j$ underlying klt Calabi–Yau pairs $(S_j,\Delta_j)$.
    \end{itemize}
    Moreover, $G$ is a finite subgroup of $\Aut(W,\sum_{j=1}^q p_j^*\Delta_j)$ acting freely in codimension one, and $\Delta_{W/G}$ is the unique $\mathbb{Q}$-divisor on $W/G$ such that
    \[K_W + \sum_{j=1}^q p_j^*\Delta_j = \gamma^*(K_{W/G}+\Delta_{W/G}),\]
    where $\gamma : W \to W/G$ is the quotient morphism. Then the weak relative movable cone conjecture holds for $(X,\Delta)$ over $S$, and there are only finitely many minimal models over $S$ up to isomorphism. Moreover, if $\Mov(X/S)$ is non-degenerate, then the relative movable cone conjecture holds as well.
\end{thmx}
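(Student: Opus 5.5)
The plan is to reduce the relative statement to the absolute statement for the very general fibre, and then use Gachet's well-clipped machinery to descend along the restriction map. Write $F := X_s$ for a very general fibre. The restriction map $r : N^1(X/S)_\R \to N^1(F)_\R$ is injective on the relevant cones after passing to the image of $\Pic$. The image $r(N^1(X/S))$ is the subspace invariant under the monodromy group $\Gamma \subset \GL(N^1(F))$, a finite-index subgroup of a monodromy action. The first step is to establish the standard comparisons for $K$-trivial fibrations with terminal $\Q$-factorial total space, as in \cite{horing2024relative} and \cite{faucher2025cone}. These say that $r(\clMov(X/S)) = \clMov(F) \cap r(N^1(X/S))$. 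They also say that a pseudoautomorphism of $F$ preserving $\Delta_s$ and commuting with monodromy, up to finite index, lifts to a relative pseudoautomorphism of $(X,\Delta)$ over $S$. Concretely, a movable class on $F$ extends to a relative class. Running a relative MMP, which terminates with a good minimal model by the assumptions on the $Y_i$ and by relative abundance for $K$-trivial fibrations over the generic point, identifies the chambers of $\Mov(X/S)$ with relative minimal models.

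The second step is the absolute input on $F \cong W/G$. By Theorem \ref{thm_cecile}, extended to the terminal primitive symplectic factors with $\Mov^+(Y_i)\subset\Eff(Y_i)$ as in \cite{gachet2025well}, the movable cone conjecture holds for $(W/G,\Delta_{W/G})$. Moreover, its movable cone is well-clipped in Gachet's sense: roughly, $\Mov^+(F)$ is cut out from a cone that is homogeneous under an arithmetic group $\Gamma_F$ by a locally finite family of rational hyperplanes, and $\psaut(F,\Delta_s)$ acts through a finite-index subgroup of $\Gamma_F$. Here we use that $G$ acts freely in codimension one, so $N^1(W/G) = N^1(W)^G$ and $\clMov(W/G)=\clMov(W)^G$. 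This lets us invoke Gachet's descent of well-clipped cones to $G$-invariants.

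The third step, which I expect to be the main obstacle, is to pass from $\clMov(F)$ to its slice by the monodromy-invariant subspace $V := r(N^1(X/S))$. We need the group $\psaut(X/S,\Delta)$, acting on $V$, to contain a finite-index subgroup of the stabiliser of $V$ in $\Gamma_F$ intersected with the centraliser of monodromy. We then apply the Looijenga-type statement for invariant slices of well-clipped cones (Gachet's result that well-clipped cones are stable under taking fixed subspaces of finite groups, or more generally under restriction to rational subspaces with arithmetic stabilisers). The monodromy group need not be finite, which is the delicate point. I would first show, as in \cite{faucher2025cone}, that the monodromy image in $\GL(N^1(F))$ is finite on the transcendental-free part. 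For the abelian factor this uses Deligne's semisimplicity and the fact that the monodromy preserves the ample cone. For the $Y_i$ it uses that monodromy lies in $O(H^2)$ and preserves a Kähler class. For the surfaces it uses that the lattice is hyperbolic with a preserved ample class. After a finite étale base change over an open subset of $S$, monodromy is therefore trivial, and $V$ becomes the fixed space of a finite group acting on a well-clipped cone. Gachet's finite-group descent then yields a rational polyhedral weak fundamental domain $\Pi'$ for $\psaut(X/S,\Delta)$ on $\Mov^+(X/S)$. Only a weak domain is obtained, because elements acting trivially on $V$ may be nontrivial. In the non-degenerate case, the interior of $\Pi'$ is nonempty in $N^1(X/S)$, and the standard refinement of Looijenga's argument (choosing a point with trivial stabiliser) upgrades $\Pi'$ to a genuine fundamental domain.

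Finally, finiteness of minimal models over $S$ follows in the standard way. Every relative minimal model corresponds to a chamber $\Nef(X'/S)$ in $\Mov(X/S)$ meeting the interior. The rational polyhedral $\Pi'$ meets only finitely many such chambers, since its rational generators are semiample by the relative abundance established in the first step. Hence there are only finitely many chambers up to the $\psaut(X/S,\Delta)$-action, and so finitely many models up to isomorphism over $S$.
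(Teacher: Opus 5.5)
\paragraph{Where the proposal breaks.} Your second step, and the finite-group descent in your third step, essentially reproduce what the paper does for the \emph{generic} fibre: the conjecture and well-clippedness for $W/G$, transport to $X_{\overline{\eta}}\simeq X_s$, and Gachet's descent to $X_\eta$. The genuine gap is the passage from $X_\eta$ to $X/S$. You bypass it by asserting that $r : N^1(X/S)_\R\to N^1(X_s)_\R$ is injective on the relevant cones, so that $\Mov^+(X/S)$ becomes the slice $\clMov(F)\cap r(N^1(X/S)_\R)$. This is false. The kernel of $r$ contains classes of vertical divisors, for instance components of reducible fibres over divisors of $S$. Moreover, as soon as $X_s$ has positive irregularity (e.g.\ $G$ trivial and $A\neq 0$), $\Aut^0(X_\eta,\Delta_\eta)$ is a positive-dimensional abelian variety. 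Its elements act trivially on $N^1(X_s)_\R$ but not on $N^1(X/S)_\R$. Indeed, for $B$ movable and semiample over $S$, the classes in $N^1(X/S)_\R/V$ restricting to $[B_\eta]$ form an affine space $[B]+N$, which can be positive-dimensional. These translations act on it by translations, via the homomorphism $h\mapsto h\cdot B_\eta-B_\eta$ and Theorem \ref{thm_li49}.

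These directions are what can make $\clMov(X/S)$ degenerate, and they are why the domain is only weak. By Proposition \ref{prop_li3.6}, its stabiliser consists of the elements acting trivially on $N^1(X/S)_\R/W$, where $W$ is the maximal subspace of $\clMov(X/S)$. It does not consist of the elements acting trivially on $r(N^1(X/S)_\R)$, as you claim. Kawamata's elliptic fibration, recalled in the introduction, already has $\dim N^1(X/S)_\R=2>1=\dim N^1(X_\eta)_\R$ and a degenerate $\clMov(X/S)$; your picture cannot produce this. Shrinking $S$ to kill monodromy does not help: for $U$ small, $N^1(X_U/U)_\R\simeq N^1(X_\eta)_\R$ by Lemma \ref{lem_lins}, so it discards exactly the kernel you need to control. (No Deligne-type argument is needed either, since the Galois action on $N^1(X_{\overline{\eta}})$ is automatically finite.) What is missing is the lifting machinery of Theorems \ref{thm_existrpc} and \ref{thm_limovconj}:
\begin{enumerate}
\item lift a rational polyhedral $P_\eta$ to $f$-movable divisors without vertical components;
\item work modulo the vertical subspace $V$;
\item tile the fibres $[B]+N$ by $\Aut^0$-translates of a polytope;
\item apply Li's results on $W$, which is defined over $\Q$.
\end{enumerate}

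\paragraph{Missing inputs.} This machinery needs two inputs that your proposal does not supply. The first is good minimal models for effective klt perturbations of $(W/G,\Delta_{W/G})$; there is no off-the-shelf ``relative abundance for $K$-trivial fibrations'' to invoke. The paper proves this on $W$ by splitting $\R$-divisors and MMP steps across the factors (Lemma \ref{lem_goodminprod}, Corollary \ref{cor_termprodgood}). That argument needs $h^1(\cO)=0$ on every factor other than $A$. It then descends to $W/G$ through a $G$-equivariant MMP dominated by an ordinary MMP on $W$. This is where termination of MMPs on the $Y_i$ and freeness of $G$ in codimension one are actually used, not for $N^1(W/G)_\R\simeq N^1(W)^G_\R$. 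Theorem \ref{thm_hxli} then produces good minimal models over $S$.

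The second input is $P_\eta\subset\Eff(X_\eta)$. This is where the hypothesis $\Mov^+(Y_i)\subset\Eff(Y_i)$ enters; it is not an input of Gachet's theorem. It passes through $\Mov^+(W)\subset\Eff(W)$, $G$-invariants and Galois descent, and is needed to apply Theorem \ref{thm_geopol}.

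Relatedly, your finiteness argument does not work as stated. Semiampleness of the generators of $\Pi'$ does not bound the number of chambers $\Pi'$ meets. What does bound it is the finite decomposition of a rational polyhedral subcone of $\Eff(X/S)$ given by Theorem \ref{thm_geopol}, combined with Kawamata's decomposition of small $\Q$-factorial modifications into flops.
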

Theorem \ref{thm_A} can be seen as a relative version of Theorem \ref{thm_cecile} of Gachet. We briefly comment on the additional assumptions, which differ from those in Gachet’s result. The termination of every MMP for klt pairs on the primitive symplectic factors is essential in our proof, as it is needed to establish the existence of good minimal models on $W$ (see Section \ref{sect_prod} for more details). Similarly, we require the action of $G$ to be free in codimension one in order to establish the same result on $W/G$ (see Section \ref{sect_proof}). These additional assumptions are satisfied in many relevant cases; for instance, we obtain the following corollary.

\begin{corx}\label{cor_B}
    Let $f : (X,\Delta) \to S$ be a $K$-trivial fibration between quasi-projective normal varieties such that $(X,\Delta)$ is a $\Q$-factorial terminal pair. Assume that the very general fibre $(X_s,\Delta_s)$ is isomorphic to $(W,\Delta_W)$, where
    \[W = A \times \prod_{i=1}^p Y_i \times \prod_{k=1}^\ell Z_k \times \prod_{j=1}^q S_j,\]
    $A$ is an abelian variety, each $Y_i$ is a projective irreducible holomorphic symplectic manifold of a known type, each $Z_k$ is an Enriques manifold of a known type, each $S_j$ is a smooth projective rational surface underlying a klt Calabi-Yau pair $(S_j,\Delta_j)$, and $\Delta_W := \sum_{j=1}^q p_j^*\Delta_j$. Then the weak relative movable cone conjecture holds for $(X,\Delta)$ over $S$, and there are only finitely many minimal models over $S$ up to isomorphism. Moreover, if $\Mov(X/S)$ is nondegenerate, then the relative movable cone conjecture also holds.
\end{corx}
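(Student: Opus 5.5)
The plan is to deduce Corollary \ref{cor_B} from Theorem \ref{thm_A} by exhibiting the very general fibre $(W,\Delta_W)$ as a quotient of a product of the allowed type, with the quotient group acting freely in codimension one. The only factors not already of type (i)--(iii) are the Enriques manifolds $Z_k$. By definition, an Enriques manifold $Z_k$ has a universal cover $\widetilde Z_k \to Z_k$ which is finite étale, Galois with cyclic group $H_k$, and whose total space is a projective irreducible holomorphic symplectic manifold (or a product of such; for the known types, $\widetilde Z_k$ is a single IHS manifold of a known deformation type). Set
\[\widetilde W := A \times \prod_{i=1}^p Y_i \times \prod_{k=1}^\ell \widetilde Z_k \times \prod_{j=1}^q S_j, \qquad G := \prod_{k=1}^\ell H_k,\]
with $G$ acting factorwise on the $\widetilde Z_k$ and trivially on the other factors. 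Then $G$ acts freely on $\widetilde W$, hence freely in codimension one, and it preserves $\sum_j p_j^*\Delta_j$ because it acts trivially on the surface factors. Moreover $\widetilde W/G \cong W$. Since $\widetilde W \to W$ is étale, the divisor $\Delta_{\widetilde W/G}$ defined in Theorem \ref{thm_A} is simply $\sum_j p_j^*\Delta_j = \Delta_W$.

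Next I will check hypothesis (ii) of Theorem \ref{thm_A} for each $Y_i$ and each $\widetilde Z_k$. These are smooth, hence $\Q$-factorial terminal, projective primitive symplectic varieties. Every known deformation type (K3$^{[n]}$, Kum$_n$, OG6, OG10) has $b_2 \ge 7$, so $b_2 \ge 5$. It remains to verify two further conditions.

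\begin{enumerate}
\item The inclusion $\Mov^+(Y) \subset \Eff(Y)$. For an IHS manifold $Y$, a rational class in $\clMov(Y)$ is either big or lies on the boundary of the positive cone and is isotropic. For known types, the SYZ / abundance statement of Matsushita, Bayer–Macrì, Markman, Yoshioka, and Mongardi–Onorati guarantees that such a class is effective, being (up to a birational model) the pullback of an ample class from a Lagrangian fibration base. Big classes are effective.
\item Termination of MMPs with good minimal models for klt pairs $(Y,D)$. Since $K_Y \sim 0$, this is an MMP for $D$. Termination follows from the movable cone conjecture for IHS manifolds (Markman, Amerik–Verbitsky) combined with finiteness of wall-crossings, as in Lehn–Mongardi–Pacienza. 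Goodness, i.e. semiampleness of the nef divisor on the output, reduces to the same abundance / SYZ statement for known types.
\end{enumerate}

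I expect item (2), namely citing or assembling a clean reference that gives \emph{every} $(K_Y+D)$-MMP terminating with a good minimal model for known types, to be the main obstacle. If no single statement covers it, I would first try to reduce it to the SYZ conjecture plus the finiteness of minimal models already contained in Theorem \ref{thm_cecile}.

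With (i)–(iii) verified and $G$ acting freely, the fibre $(X_s,\Delta_s) \cong (\widetilde W/G,\Delta_{\widetilde W/G})$ satisfies all the hypotheses of Theorem \ref{thm_A}, and the conclusions of Corollary \ref{cor_B} follow verbatim from it.
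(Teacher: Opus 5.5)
Your reduction is the paper's own. You replace each $Z_k$ by its universal cover, let $G=\prod_k \pi_1(Z_k)$ act factorwise and hence freely, note that $\Delta_{\widetilde W/G}=\Delta_W$, and check hypothesis (ii) of Theorem \ref{thm_A} for the IHS factors using the SYZ conjecture for known types. For $\Mov^+\subset\Eff$ the paper simply cites \cite[Lemma 2.17]{denisi2022pseudo}, and your extra check $b_2\ge 5$ is correct. The genuine gap is in your item (2), where you expected it. The claim that ``goodness reduces to the same abundance/SYZ statement'' does not work as stated. A $(K_V+D)$-MMP on an IHS manifold $V$ generally contains divisorial contractions, so its output $V^{\min}$ is a singular symplectic variety, not an IHS manifold. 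SYZ for known types therefore says nothing about $V^{\min}$ directly.

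The missing idea, which the paper uses, is to pass to a smooth crepant model. By \cite[Theorem 1.2]{lehn2016deformations} the MMP terminates, and by \cite[Lemma 4.1]{lehn2016deformations} every $V_i$ occurring in it admits a crepant resolution $T_i\to V_i$ by an IHS manifold. Since $T_n$ is birational to $V$, it is deformation equivalent to $V$ by \cite[Theorem 4.6]{huybrechts1999compact}, hence again of known type. So SYZ applies to the nef pullback of $D^{\min}$ to $T_n$, with base point freeness handling the big case. Semiampleness then descends to $V^{\min}$ along the birational morphism $T_n\to V^{\min}$. The Huybrechts step is what keeps you inside the known types; the ``birational model'' in your item (1) needs the same remark. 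Your proposed fallback via Theorem \ref{thm_cecile} would not close the gap. That result controls small $\Q$-factorial modifications of a fixed pair, so it does not cover the varieties produced after a divisorial contraction, and it gives no semiampleness of $K+D$ on the output. Note also that the termination theorem is due to Lehn--Pacienza, not Lehn--Mongardi--Pacienza.
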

Enriques manifolds, introduced and studied independently by Boissière-Nieper-Wißkirchen-Sarti \cite{boissiere2011higher} and Oguiso-Schröer \cite{oguiso2011enriques, oguiso2011periods}, naturally fit into the framework of Theorem \ref{thm_A}, since they are finite étale quotients of projective irreducible holomorphic symplectic manifolds. For simplicity, we say that an Enriques manifold is of known type if its universal cover is deformation equivalent to one of the known irreducible holomorphic symplectic types. As a consequence of our results, we recover, to the best of our knowledge, all known results (or special cases thereof) concerning the relative movable cone conjecture, including \cite[Corollary 1.9]{li2023relative}, \cite[Theorem 1.3]{horing2024relative}, \cite[Theorem 1.2]{moraga2024geometric}, \cite[Corollary 8.5]{engel2025boundedness}, and \cite[Theorem B]{faucher2025cone}. The proof relies on the following general result, which is inspired by and strengthens \cite[Theorems 1.4 and 1.8]{li2023relative}. For the definition of effective klt perturbations, see Definition \ref{def_klteffpert}.

\begin{thmx}\label{thm_C}
    Let $f : (X,\Delta) \to S$ be a $K$-trivial fibration between quasi-projective normal varieties such that $(X,\Delta)$ is a $\Q$-factorial terminal pair. Let $X_\eta$ be the generic fibre of $f$. Assume the following conditions.
    \begin{enumerate}
        \item For a very general point $s \in S$, good minimal models exist for effective klt perturbations of $(X_s, \Delta \vert_{X_s})$.
        \item There exists a rational polyhedral cone $P_\eta \subset \Eff(X_\eta)$ such that $\Mov(X_\eta) \subset \psaut(X_\eta, \Delta_\eta) \cdot P_\eta$.
    \end{enumerate}
    Then the weak relative movable cone conjecture holds for $X/S$ and there are only finitely many minimal models over $S$ up to isomorphism. Moreover, if $\Mov(X/S)$ is non-degenerate, then the relative movable cone conjecture holds as well.
\end{thmx}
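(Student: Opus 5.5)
The strategy follows Li–Zhao \cite{li2023relative}: transfer the fundamental-domain statement from the generic fibre to $X/S$ through the restriction maps between Néron–Severi spaces, and use the existence of good minimal models to control the movable cone. The argument has three steps.

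\emph{Step 1: comparing $N^1(X/S)$ with $N^1(X_\eta)$.} Since $X$ is $\Q$-factorial, the restriction map $r\colon N^1(X/S)_\Q \to N^1(X_\eta)_\Q$ is surjective. Its kernel is spanned by classes of divisors that are vertical over $S$, and on a $K$-trivial fibration such classes are numerically trivial over $S$ up to the exceptional ones. After possibly shrinking $S$ and replacing it by a suitable modification, I expect $r$ to identify $\clMov(X/S)$ with $\clMov(X_\eta)$ and $\Eff(X/S)$ with $\Eff(X_\eta)$ modulo vertical classes. Every relative pseudoautomorphism restricts to a pseudoautomorphism of $X_\eta$. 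Conversely, an element of $\psaut(X_\eta,\Delta_\eta)$ spreads out to a birational self-map of $X$ over $S$; it is an isomorphism in codimension one over an open subset of $S$. The key claim is that it becomes a genuine element of $\psaut(X/S,\Delta)$, possibly after composing with a finite sequence of flops. To prove this I would use terminality together with $K_X+\Delta\equiv_S 0$: a birational map between relative minimal models of the same $K$-trivial pair can only contract divisors that are vertical, and such vertical divisors are already controlled by Step 2.

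\emph{Step 2: good minimal models and the cone $\Pi'$.} Take $D$ in $P_\eta$, lift it to a $\Q$-divisor on $X$, and consider the pair $(X,\Delta+\epsilon D)$. By hypothesis~(1), the very general fibre of this pair has a good minimal model. Using the results of Hacon–Xu or Lai on the existence of relative good minimal models when general fibres have one, the pair $(X,\Delta+\epsilon D)$ has a good minimal model over $S$. Running the MMP for the finitely many generators of the rational polyhedral cone $P_\eta$, and using finiteness of models for a polytope of boundaries (BCHM-type results, with klt boundaries carried through the good minimal model statement), I obtain a finite decomposition of the preimage cone $\tilde P := r^{-1}(P_\eta)\cap \Eff(X/S)$ into chambers. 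Each chamber is the pullback of the nef cone of a minimal model $X_i/S$ under a small birational map $X \dashrightarrow X_i$. Intersecting with $\clMov(X/S)$, I then take $\Pi'$ to be the rational polyhedral cone generated by $\tilde P\cap \Mov^+(X/S)$, together with a rational polyhedral generating set for the vertical part, which is finite-dimensional and rational.

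\emph{Step 3: weak fundamental domain and finiteness.} Hypothesis~(2) combined with Step 1 gives $\Mov^+(X/S)\subset \psaut(X/S,\Delta)\cdot\Pi'$. By Looijenga's criterion, as recalled in the paper's preliminaries (or by the lemma of Li–Zhao), the existence of a rational polyhedral $\Pi'$ with $\psaut\cdot\Pi' \supseteq \Mov^+$ yields a weak rational polyhedral fundamental domain. When $\Mov(X/S)$ is non-degenerate, the kernel of the action on $N^1(X/S)$ is finite, and the weak domain can be refined to a genuine fundamental domain. For finiteness of minimal models, note that any minimal model $X'/S$ is a small $\Q$-factorial modification whose pulled-back ample cone meets $\Mov(X/S)$. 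Up to the $\psaut$-action, that cone meets $\Pi'$, and $\Pi'$ is covered by the finitely many chambers from Step 2. Hence $X'$ is isomorphic over $S$ to one of the $X_i$.

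The main obstacle is Step 1, in particular lifting pseudoautomorphisms of $X_\eta$ to $\psaut(X/S,\Delta)$ and handling divisors that are vertical over $S$, since these may break the identification of the cones. I would try first to reduce to the case where every vertical divisor is numerically trivial over $S$ or exceptional, by running a relative MMP that contracts such divisors. Since $X$ is $\Q$-factorial terminal and $K$-trivial over $S$, this should only change $X$ by small modifications and leave the movable cone unaffected.
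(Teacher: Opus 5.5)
\textbf{There is a genuine gap, in the identification claimed in your Step~1, on which Steps~2 and~3 rest.}

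The kernel of $r\colon N^1(X/S)_\R\to N^1(X_\eta)_\R$ is \emph{not} spanned by vertical classes in general. Let $V$ be the span of vertical divisors. The induced map $N^1(X/S)_\R/V\to N^1(X_\eta)_\R$ need not be injective: a divisor $D$ with $D_\eta\equiv 0$ but $D_\eta\not\sim_\Q 0$ can be non-trivial on curves in special fibres without being a combination of vertical divisors. For example, take an elliptic fibration with sections $O,P$ such that $P_\eta-O_\eta$ is non-torsion and $O,P$ meet different components of a reducible fibre over a codimension-two point (compare Kawamata's degenerate example recalled in the introduction).

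This is precisely the subspace $N$ in Step~3 of the proof of Theorem~\ref{thm_existrpc}. Modulo $V$, the relatively effective classes over $[B_\eta]$ form the whole affine space $[B]+N$, while no non-zero class of $N$ is relatively effective. The directions of $V$ can be absorbed, since $\pm V\subset\Eff(X/S)$; those of $N$ cannot.

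Consequently, whenever $N\neq 0$, your cone $\tilde P=r^{-1}(P_\eta)\cap\Eff(X/S)$ contains translates $[B]+N$ (modulo $V$) but no non-zero direction of $N$. It is therefore not closed, and a fortiori not rational polyhedral. This breaks your argument in two places:
\begin{itemize}
\item Theorem~\ref{thm_geopol} does not apply to $\tilde P$, so nothing makes your $\Pi'$ a rational polyhedral cone inside $\Eff(X/S)$. Taking closures reintroduces $N$, which is not effective, and you then lose the geography of models needed both for Lemma~\ref{lem_existsubcone} and for your finiteness argument.
\item The step ``hypothesis (2) combined with Step~1 gives $\Mov^+(X/S)\subset\psaut(X/S,\Delta)\cdot\Pi'$'' is unfounded. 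Hypothesis (2) only moves classes on $X_\eta$; nothing in your argument moves a class along its fibre under $r$.
\end{itemize}
Shrinking or modifying $S$, or contracting vertical divisors, is not a legitimate way around this. These operations change $N^1(X/S)$ and $\Mov(X/S)$, and a divisorial contraction is not a small modification.

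\textbf{The missing idea} is to let $\Aut^0(X_\eta,\Delta_\eta)$, an abelian variety by Theorem~\ref{thm_xu}, act by translations on these fibres.
\begin{itemize}
\item For $[B]\in\Mov(X/S)_\Q$ over $P_\eta$, pass to a good minimal model of $(X,\Delta+\epsilon B)$ over $S$ on which $B$ becomes semiample.
\item Corollary~\ref{cor_phiD} and Lemma~\ref{lem_li48} make $h\mapsto h\cdot B_\eta-B_\eta$ a group homomorphism $\Aut^0(X_\eta,\Delta_\eta)\to\Pic(X_\eta)$.
\item Theorem~\ref{thm_li49} shows that every rational class $[D]\in[B]+N$ satisfies $k([D]-[B])=[h\cdot B-B]$ modulo $V$, for some $h$ and some integer $k\geq 1$.
\item Finitely many $h_i$ then give a polytope $[B]+\sum_i[0,1]\,[h_i\cdot B-B]$ whose $\langle h_i\rangle$-translates tile $[B]+N$.
\item The linear map $\tau\mapsto G_\tau$ on $\Aut^0(X/S,\Delta)\otimes\Q$ allows a uniform choice of the $h_i$ for a whole lift $P\subset\Mov(X/S)$ of $P_\eta$.
\end{itemize}
This produces a rational polyhedral $\Pi$ with $\Mov(X/S)_\Q\subset\psaut(X/S,\Delta)\cdot(\Pi+V)$. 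Only then does Lemma~\ref{lem_existsubcone}, applied to $\Pi+V\subset\Eff(X/S)$, yield the cone $Q$.

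\textbf{Smaller corrections.}
\begin{itemize}
\item Lifting pseudoautomorphisms is not an obstacle. For a terminal $K$-trivial fibration, restriction already gives $\psaut(X/S,\Delta)\simeq\psaut(X_\eta,\Delta_\eta)$: terminality prevents the crepant extension from contracting any divisor, vertical or not, so no flops are involved.
\item In the degenerate case Looijenga's criterion is unavailable. You must know that the maximal subspace $W\subset\clMov(X/S)$ is defined over $\Q$ before applying Proposition~\ref{prop_li3.6}.
\item In the non-degenerate case, the relevant point is that the stabiliser of the domain in the image group $\Gamma$ is trivial, not that some kernel is finite.
\end{itemize}
Once $Q$ exists, your finiteness argument for minimal models (which are small $\Q$-factorial modifications here) is essentially the paper's.
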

We emphasize that Theorem \ref{thm_C} and its proof originate in the works \cite{li2023relative, li2025relative}. Our contribution consists in relaxing the assumptions in \cite[Theorems 1.4 and 1.8]{li2023relative}; more precisely, whereas Li and Li-Zhao require the existence of good minimal models for effective klt pairs in relative dimension $\dim(X/S)$, we weaken this assumption by replacing it with item (1) of Theorem \ref{thm_C}. The construction of the weak rational polyhedral fundamental domain is explicit enough to show that, when the relative movable cone is non-degenerate, the movable cone conjecture (and not only its weak form) holds. The proof of Theorem \ref{thm_A} then proceeds by showing that good minimal models exist for effective klt perturbations of the very general fiber, and by applying \cite{gachet2025well} to obtain the movable cone conjecture for $\Mov^+(X_\eta)$. This relies on the notion of a well-clipped cone introduced by Gachet, which we recall in Section \ref{sect_prelconv}, together with the fact that the (closed) movable cones of the following varieties are well-clipped: abelian varieties, projective irreducible holomorphic symplectic manifolds, projective terminal $\Q$-factorial primitive symplectic varieties $W$ with $b_2(W)\ge 5$, and smooth projective surfaces underlying klt Calbi-Yau pairs (see \cite[Example 3.6]{gachet2025well} and Example \ref{ex_wellclipped}). The results of \cite{gachet2025well} play a crucial role throughout this paper. Once this is established, we show that the movable cone of the generic fibre is contained in the effective cone, and we then apply Theorem \ref{thm_C}. 

It is important to note that the non-degeneracy assumption is essential. Indeed, building on the example \cite[Example 6.8]{reid1983minimal} which goes back to \cite{shepherd1980some}, Kawamata shows in \cite[Example 3.8.(2)]{kawamata97cone} that there exists an elliptic fibration $f : X \to S$ of relative Picard rank two with $X$ and $S$ smooth such that $\clMov(X/S)$ is a degenerate cone. Moreover, in this example, Kawamata shows that
\[\Mov(X/S) = \clMov^e(X/S) \subsetneq \Mov^+(X/S).\]
Therefore, the existence of a rational polyhedral fundamental domain for $\Mov^+(X/S)$ under the action of $\psaut(X/S,\Delta)$ does not necessarily imply the same for $\clMov^e(X/S)$.

\subsection*{Organization}
In Section \ref{sect_prel}, we recall classical definitions from convex and birational geometry. We also take this opportunity to recall results from \cite{li2023relative, li2025relative} on the construction of (weak) rational polyhedral fundamental domains, which will be indispensable for proving our theorems. We further recall the definition of a well-clipped cone introduced by Gachet in \cite{gachet2025well}. In Section \ref{sect_groupscheme}, we present Li's work \cite[Section 4]{li2023relative} on group schemes associated with $K$-trivial fibrations. In Section \ref{sect_gentorel}, we show that the results of \cite{li2023relative} remain valid under a weakening of one of the hypotheses. This section contains technical results needed to establish the relative movable cone conjecture, as well as the proof of Theorem \ref{thm_C}. In Section \ref{sect_prod}, we determine conditions under which a product of varieties admits good minimal models. These results are used in the proofs of Theorem \ref{thm_A} and Corollary \ref{cor_B} which are given in Section \ref{sect_proof}.

\subsection*{Acknowledgements}
This paper is part of my doctoral work. I am deeply grateful to my PhD advisor, Gianluca Pacienza, for introducing me to this problem and for his guidance throughout this project. I also thank him for his careful reading of earlier versions of this paper and for his many helpful comments and suggestions. I would like to thank Francesco A. Denisi and Zhixin Xie for answering my questions and for their invaluable help in establishing the results of Section \ref{sect_prod}. I am also grateful to the authors of \cite{denisi2024mmp} for kindly sharing a preliminary version of their revised paper. Finally, I thank Cécile Gachet for answering my questions about her work \cite{gachet2025well} and for the discussions we had which were extremely beneficial to me.

\subsection*{Conventions}
A \emph{variety} is a geometrically integral, separated scheme of finite type over a field $F$. A very general fibre of a morphism between complex varieties always means a fibre over a very general closed point. Similarly, except for the generic point of a scheme, all points are assumed to be closed unless otherwise specified.

\section{Preliminaries}\label{sect_prel}

\subsection{Convex geometry}\label{sect_prelconv}
In the following, we denote by $V_\Z$ an integral lattice, i.e. a finitely generated free abelian group. For $\K = \Q$ or $\R$, we set $V_\K := V_\Z \otimes_\Z \K$. For simplicity, we denote by $V$ the vector space $V_\R$. If $S$ is a subset of a topological space $X$, we denote the interior of $S$ by $S^\circ$.

\begin{defn}\label{def_cone}
    A \emph{cone} $C \subset V$ is a subset of $V$ stable by multiplication by positive scalars. A convex cone $C \subset V$ is
    \begin{itemize}
        \item \emph{full-dimensional} if the linear span of $C$ is equal to $V$.
        \item \emph{degenerate} if its closure $\overline{C}$ in $V$ contains a non-trivial subvector space of $V$.
    \end{itemize}
    For a degenerate convex cone $C$, the \emph{maximal vector subspace of $\overline{C}$} is defined as the intersection
    \[W(\overline{C}) := \overline{C} \cap \left( -\overline{C} \right).\]
    We say that $W(\overline{C})$ is \emph{defined over $\Q$} if $W(\overline{C}) = W(\overline{C})_\Q \otimes_\Q \R$, where $W(\overline{C})_\Q = W(\overline{C}) \cap V_\Q$. When the context is sufficiently clear, we will simply write $W$ to denote $W(\overline{C})$.
\end{defn}

\begin{defn}
    For a subset $S \subset V$, we denote by Cone$(S)$ the closed convex cone spanned by $S$ in $V$ and by Conv$(S)$ the convex hull of $S$ in $V$. A closed convex cone $C \subset V$ is \emph{polyhedral} (respectively, \emph{rational polyhedral}) if there exists a finite set $S \subset V$ (respectively, a finite set $S \subset V_\Q$) such that $C =$ Cone$(S)$.
\end{defn}

\begin{defn}
    A \emph{(rational) polytope} $Q \subset V$ is a subset which is the convex hull of finitely many (rational) points of $V$. In particular, a polytope is always closed.
\end{defn}
In the following, $\Gamma$ will denote a group acting (on the left) linearly on $V$. This is equivalent to the existence of a group morphism $\rho : \Gamma \to \GL(V)$. With regard to notations, for any $x \in V$ and any $\gamma \in \Gamma$, we will write $\gamma x$ to denote the element $\rho(g)(x)$. We say that $\Gamma$ acts \emph{faithfully} on $V$ if $\rho$ is injective. The following definitions are taken from \cite[Section 4]{looijenga2014discrete} and \cite[Section 3]{li2025relative}.

\begin{defn}\label{def_poltype}
    Let $C \subset V$ be a full-dimensional open convex cone, and $\Gamma$ a group acting faithfully and linearly on $V$. Assume that the action leaves $C$ and some lattice in $V_\Q$ invariant. Set 
    \[C^+ := \operatorname{Conv}(\overline{C} \cap V_\Q).\]
    We say that $(C^+, \Gamma)$ is of \emph{polyhedral type} if there exists a polyhedral cone $\Pi \subset C^+$ such that 
    \[C \subset \Gamma \cdot \Pi,\]
    where $\Gamma \cdot \Pi = \enstq{\gamma x}{\gamma \in \Gamma, x \in \Pi}$.
\end{defn}
\begin{rmk}
    With the notations and assumptions as above, the hypothesis on $C$ being open is not necessary. Indeed, since $\overline{C^\circ} = \overline{C}$, it follows that $C^+ = (C^\circ)^+$ and we can work with the interior of $C$.
\end{rmk}

\begin{prop}[Proposition-Definition 4.1 - \cite{looijenga2014discrete}, Proposition 3.2 - \cite{li2023relative}]\label{prop_poltype}
    Let $C \subset V$ be a full-dimensional non-degenerate open convex cone, and $\Gamma$ a group acting faithfully and linearly on $V$. Assume that the action leaves $C$ and some lattice in $V_\Q$ invariant. The following properties are equivalent.
    \begin{enumerate}
        \item There exists a polyhedral cone $\Pi \subset C^+$ such that $C^+ = \Gamma \cdot \Pi$.
        \item There exists a polyhedral cone $\Pi \subset C^+$ such that $C \subset \Gamma \cdot \Pi$, i.e. $(C^+,\Gamma)$ is of polyhedral type.
        \item There exists a polyhedral cone $\Pi \subset C^+$ such that $C \cap V_\Q \subset \Gamma \cdot \Pi$.
        \item For every $\Gamma$-invariant lattice $L \subset V_\Q$, $\Gamma$ has finitely many orbits in the set of extreme points of $\operatorname{Conv}(C \cap L)$.
    \end{enumerate}
    In case (2), we actually have $C^+ = \Gamma \cdot \Pi$. 
\end{prop}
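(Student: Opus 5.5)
The plan is to prove the cycle of implications $(1)\Rightarrow(2)\Rightarrow(3)\Rightarrow(4)\Rightarrow(1)$, and to obtain the final claim $C^+=\Gamma\cdot\Pi$ along the way. The first two steps are formal. If $C^+=\Gamma\cdot\Pi$, then $C\subset C^+$ gives $(2)$, and $(2)$ trivially gives $(3)$.

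For $(3)\Rightarrow(4)$, I fix a $\Gamma$-invariant lattice $L$ and let $\Pi$ be as in $(3)$. I write $\Pi=\operatorname{Cone}(v_1,\dots,v_m)$ with $v_k\in C^+$. Any extreme point $e$ of $\operatorname{Conv}(C\cap L)$ lies in $C\cap L\subset V_\Q$, so $e=\gamma x$ with $x\in\Pi$. Replacing $e$ by $\gamma^{-1}e$, which is again extreme because $\Gamma$ preserves $C$ and $L$, I may assume $e\in\Pi\cap C\cap L$. The key observation is that, since $C$ is open and non-degenerate, the extreme points of $\operatorname{Conv}(C\cap L)$ are bounded away from the origin. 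More precisely, every extreme point lying in the closed cone $\Pi$ is confined to a bounded region. To see this, note that $\operatorname{Conv}(C\cap L)$ contains $\Pi'+\operatorname{Conv}(C\cap L)$ for suitable lattice points in the interior. So every point of $\Pi$ far enough from the origin is a convex combination of lattice points of $C$ plus something in $C$, hence not extreme. This needs care near the boundary rays of $\Pi$ that lie in $\partial C$. I would handle it by approximating: each $v_k$ is a limit of rational points of $\overline C$, and for $x\in\overline C\cap V_\Q$, a large multiple $Nx$ plus a fixed interior lattice point $c_0$ lies in $C\cap L$. Then $\operatorname{Conv}(C\cap L)\supset c_0+\Pi$ after scaling, so extreme points in $\Pi$ lie in the bounded set $\Pi\setminus(c_0+\Pi)$, or a finite union of such sets. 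A bounded set contains finitely many lattice points, which gives finitely many orbits.

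For $(4)\Rightarrow(1)$, I choose a $\Gamma$-invariant lattice $L$, which exists by hypothesis, and let $e_1,\dots,e_n$ be orbit representatives of the extreme points of $K:=\operatorname{Conv}(C\cap L)$. I expect this to be the main obstacle. I would first show, using non-degeneracy of $C$, that $K$ is locally polyhedral: a closed convex set whose every compact slice meets only finitely many extreme points. Then I would show that every point of $C^+$ lies in a cone spanned by finitely many extreme points forming a face of $K$, and that there are only finitely many $\Gamma$-orbits of such faces. Taking $\Pi$ to be the cone over the union of representative faces would give $C^+=\Gamma\cdot\Pi$.

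To control the faces, I would argue that each bounded face of $K$ containing $e_i$ involves only extreme points within bounded distance of $e_i$. This should follow from a Looijenga-style discreteness argument: the stabiliser-free count is finite because there are finitely many lattice points in a ball. Rational points of $\overline C$ are handled as in Looijenga's Proposition-Definition 4.1 by noting $\mathbb{R}_{>0}K\supset C^+\setminus\{0\}$. Since this is exactly \cite[Proposition-Definition 4.1]{looijenga2014discrete}, in the paper I would ultimately cite it for this hardest step, after verifying that the hypotheses match.
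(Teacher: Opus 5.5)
The paper gives no proof of this statement. It quotes \cite[Proposition-Definition 4.1]{looijenga2014discrete} and \cite[Proposition 3.2]{li2023relative}, so ultimately citing Looijenga is the right move, and your formal steps $(1)\Rightarrow(2)\Rightarrow(3)$ are fine. The arguments you supply yourself for the other steps, however, have genuine gaps.

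\emph{The step $(3)\Rightarrow(4)$.} The region $\Pi\setminus(c_0+\Pi)$ is unbounded as soon as $\dim\Pi\ge 2$, since it contains a strip along each facet of $\Pi$. It can also contain infinitely many points of $C\cap L$. For instance, take $C$ the open positive quadrant of $\R^2$, $L=\Z^2$, $\Pi=\operatorname{Cone}((1,1),(2,1))$ and $c_0=(1,1)$: every $(2n,n)$ lies in $(\Pi\cap C\cap L)\setminus(c_0+\Pi)$. If $\Pi$ has a ray in $\partial C$, no finite family of translates $c_j+\Pi$ with $c_j\in C$ covers $\Pi$ outside a bounded set, so ``approximating'' cannot repair this.

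To make $\Pi$ rational, you should \emph{enlarge} it rather than approximate it. Each generator lies in $C^+$, so it is a finite convex combination of points of $\overline C\cap V_\Q$. An approximating cone need not contain $\Pi$, and the covering hypothesis would then be lost.

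The missing idea is to use the openness of $C$ face by face. Subdivide the rational cone into simplicial cones $\operatorname{Cone}(v_1,\dots,v_d)$ with $v_j\in\overline C\cap L$. Suppose an extreme point $e$ of $\operatorname{Conv}(C\cap L)$ lies in the relative interior of the face spanned by $(v_j)_{j\in J}$. That relative interior meets the open cone $C$, so it is entirely contained in $C$, because $\lambda C+(1-\lambda)\overline C\subset C$ for $0<\lambda\le 1$. Hence if $e=\sum_{j\in J}s_jv_j$ had some $s_j>1$, then $e\pm v_j\in C\cap L$, and $e$ would not be extreme. All coordinates of $e$ therefore lie in $(0,1]$, which is a bounded set, and finiteness follows.

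\emph{The step $(4)\Rightarrow(1)$.} The assertion $\R_{>0}K\supset C^+\setminus\{0\}$ is false. Since $K=\operatorname{Conv}(C\cap L)\subset C$, one has $\R_{>0}K=C$, which misses every nonzero rational point of $\partial C$ (e.g.\ $(1,0)$ in the quadrant example). For the same reason, a cone spanned by finitely many extreme points of $K$ lies in $C\cup\{0\}$. So your claim that every point of $C^+$ lies in such a cone fails. Reaching $C^+\setminus C$ needs a genuinely different argument for the rational boundary points, for example via the recession directions of the unbounded faces of $K$. This is exactly the non-formal part of Looijenga's result.

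\emph{The final sentence of the proposition.} It concerns the \emph{same} $\Pi$ as in (2). Your cycle only yields, via $(4)\Rightarrow(1)$, some new cone $\Pi'$ with $\Gamma\cdot\Pi'=C^+$. Showing that $C\subset\Gamma\cdot\Pi$ already forces the rational boundary part of $C^+$ into $\Gamma\cdot\Pi$ is a separate statement, not something obtained ``along the way''.

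You should either cite \cite{looijenga2014discrete} for the whole proposition, including this last claim, as the paper does, or supply correct arguments for these steps.
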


\begin{lem}[Theorem 3.8, Application 4.14 - \cite{looijenga2014discrete}, Lemma 3.5 - \cite{li2025relative}]
    Let $C \subset V$ be a full-dimensional open convex cone, and $\Gamma$ a group acting faithfully and linearly on $V$, leaving $C$ and some lattice in $V_\Q$ invariant. Assume that $C$ is non-degenerate and that $(C^+, \Gamma)$ is of polyhedral type. Then, under the action of $\Gamma$, $C^+$ admits a rational polyhedral fundamental domain.
\end{lem}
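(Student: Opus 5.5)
Since $C$ is non-degenerate and $(C^+,\Gamma)$ is of polyhedral type, Proposition \ref{prop_poltype} gives a polyhedral cone $\Pi\subset C^+$ with $C^+=\Gamma\cdot\Pi$. The plan is to cut $\Pi$ down to a genuine fundamental domain by a Dirichlet--Voronoi construction of Looijenga type, with respect to a rational point of the dual cone.

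\textbf{Step 1: a rational point of the dual cone and discreteness.} Since $C$ is open and non-degenerate, the dual cone
\[C^\vee=\{\xi\in V^*\mid \xi\ge 0 \text{ on } \overline{C}\}\]
has non-empty interior. Choose $\xi$ in the interior of $C^\vee$ lying in the dual of the invariant lattice $L$, and arrange that $\xi$ has trivial stabilizer in $\Gamma$. To get the trivial stabilizer, note that stabilizers of points of $(C^\vee)^\circ$ are finite, because $\Gamma$ preserves a lattice and a compact slice of $\overline C$. One then avoids the finitely many relevant fixed hyperplanes near a given point, which is possible because $\Gamma$ acts faithfully. The key input is discreteness: for each $x\in C^+$ and each $t>0$, the set $\{\gamma\in\Gamma \mid (\gamma\xi)(x)\le t\}$ should be finite. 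For $x$ in the open cone $C$ this follows from the lattice invariance, since $\Gamma\xi$ is a set of integral points in $(C^\vee)^\circ$ and $x$ is strictly positive on $\overline{C^\vee}\setminus\{0\}$. For rational boundary points of $C^+$, the polyhedral-type hypothesis, via finitely many orbits of extreme points as in Proposition \ref{prop_poltype}(4), is what controls these orbits.

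\textbf{Step 2: the domain.} Define
\[\cD=\{x\in C^+ \mid \xi(x)\le (\gamma\xi)(x)\ \text{for all } \gamma\in\Gamma\}.\]
First, the $\Gamma$-translates of $\cD$ cover $C^+$. For $x\in C^+$, the minimum of $(\gamma\xi)(x)$ over $\gamma$ is attained by discreteness, say at $\gamma_0$, and then $\gamma_0^{-1}x\in\cD$. Second, translates have disjoint interiors: if $x\in\cD^\circ\cap g\cD^\circ$, then $\xi(x)=(g\xi)(x)$ holds on an open set, which forces $g\xi=\xi$, hence $g=e$.

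\textbf{Step 3: rational polyhedrality (main obstacle).} The hard part is showing that $\cD$ is cut out by finitely many of the inequalities. Using $C^+=\Gamma\cdot\Pi$, it suffices to show that only finitely many $\gamma$ satisfy $\gamma\Pi\cap\cD\ne\emptyset$. Given this, $\cD\cap\gamma\Pi$ is defined inside the rational polyhedral cone $\gamma\Pi$ by finitely many rational linear inequalities, because the minimum over $\Gamma$ on the compact rational slice of $\Pi$ is attained among finitely many $\gamma$, again by discreteness. Then $\cD$ is a finite union of rational polyhedral cones, and it is convex as an intersection of half-spaces with $C^+$, so $\cD$ is rational polyhedral.

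To prove the finiteness claim, I would follow Looijenga: take the finitely many rational generators $v_1,\dots,v_m$ of $\Pi$ and bound $(\gamma\xi)(v_k)$ in terms of $\xi$. If $\gamma^{-1}y\in\cD$ for some $y\in\Pi$, then $\xi(\gamma^{-1}y)\le(\gamma'\xi)(\gamma^{-1}y)$ for all $\gamma'$. Combining this with a uniform comparison constant between $\xi$ and the minimum of the orbit functionals on $\Pi$, which exists by compactness of a slice and positivity, one obtains a bound showing that only finitely many $\gamma$ qualify. Making this comparison uniform on boundary rays of $\Pi$ that lie in $\partial C$ is the delicate point; I expect to handle it by invoking the reduction-theory estimates of \cite{looijenga2014discrete} rather than reproving them.
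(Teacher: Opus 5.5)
Your overall strategy, a Dirichlet domain $\cD$ attached to a rational $\xi$ in the interior of the dual cone with trivial stabilizer, is the construction behind the result the paper quotes. The paper gives no argument of its own, only the citation to Looijenga and Li--Zhao. However, the finiteness statements you use at the boundary are false, and Step 3 rests on them.

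\emph{Why the boundary finiteness fails.} Suppose $x\in C^+\setminus C$ has infinite stabilizer $\Gamma_x$. Then $(\gamma\xi)(x)=\xi(x)$ for every $\gamma\in\Gamma_x$, so $\{\gamma \mid (\gamma\xi)(x)\le t\}$ is infinite for $t\ge\xi(x)$. Polyhedral type does not exclude this. It happens at every rational isotropic ray (cusp) when $\Gamma$ is an arithmetic subgroup of the orthogonal group of a $\Q$-isotropic hyperbolic lattice of rank at least $3$. It also happens for the fibre class of an elliptic fibration with infinite Mordell--Weil group on a K3 surface. These are typical instances of the lemma.

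\emph{Consequences for Step 3.} The reduction is wrong. Since $\Gamma\cD=C^+$, some translate $x'$ of such a point lies in $\cD$. If $x'\in\gamma_1\Pi$, then $x'\in h\gamma_1\Pi\cap\cD$ for every $h\in\Gamma_{x'}$. So infinitely many translates of $\Pi$ meet $\cD$, and ``only finitely many $\gamma$ with $\gamma\Pi\cap\cD\neq\emptyset$'' cannot be proved. For the same reason, compactness of a slice of $\Pi$ cannot turn pointwise finiteness into a finite set of relevant $\gamma$, because pointwise finiteness fails precisely on $\Pi\cap\partial C$.

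\emph{Step 2.} The covering argument survives, but for a different reason than the one you give. Write $x=\sum_i\lambda_i x_i$ with $\lambda_i>0$ and $x_i\in\overline C\cap\frac1N L$. Then $(\gamma\xi)(x_i)\in\frac1N\Z_{>0}$, so $(\gamma\xi)(x)$ takes only finitely many values below any bound. Hence the minimum is attained, though in general by infinitely many $\gamma$.

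\emph{What is missing.} You need a uniform statement near $\partial C$: that $\cD$ is cut out in $C^+$ by finitely many of the inequalities $\xi\le\gamma\xi$. This requires controlling how translates of $\Pi$ accumulate at boundary faces with infinite stabilizers. One possibility is to count only translates meeting $\cD$ in points of $C$, and then treat the boundary separately. That is exactly the nontrivial content of Looijenga's result, which you defer. So at best your proposal reduces to the same citation the paper uses, and the intermediate claim through which you reach it is false.
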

For the moment, the existence of fundamental domains is known under the assumption that the cone is non-degenerate. The work of \cite{li2023relative, li2025relative}, and more precisely \cite[Proposition 3.6]{li2023relative} and \cite[Proposition 3.8]{li2025relative}, allow these results to be generalized to ensure the existence of weak fundamental domains for degenerate open convex cones. Using the notation of Definition \ref{def_cone}, let $C \subset V$ be an open convex cone and denote by $W$ the maximal vector subspace contained in $\overline{C}$. When $W$ is defined over $\Q$, the vector space $V/W$ inherits a natural lattice structure by the isomorphism
\[V/W \simeq V_\Q / W_\Q \otimes_\Q \R.\]
Let $p : V \to V/W$ be the quotient map. For every subset $S \subset V$, we denote by $\widetilde{S}$ the image $p(S)$ of $S$ under $p$. By maximality of $W$, $\Gamma$ necessarily preserves $W$ and thus induces an action $\tilde{\rho} : \Gamma \to \GL(V/W)$ that leaves $\widetilde{C}$ invariant.

\begin{prop}[Proposition 3.6 - \cite{li2023relative}]\label{prop_li3.6}
    Let $C \subset V$ be a full-dimensional open convex cone, and $\Gamma$ a group acting linearly on $V$ leaving $C$ and some lattice in $V_\Q$ invariant. Assume that the maximal vector subspace $W \subset \overline{C}$ is defined over $\Q$. Denote by $\widetilde{\Gamma}$ the subgroup $\tilde{\rho}(\Gamma)$ of $\GL(V/W)$. If $(\widetilde{C}^+, \widetilde{\Gamma})$ is of polyhedral type, then there is a rational polyhedral cone $\Pi \subset C^+$ such that $\Gamma \cdot \Pi = C^+$, and for each $\gamma \in \Gamma$, either $\gamma \Pi \cap \Pi^\circ = \emptyset$ or $\gamma \Pi = \Pi$. Moreover,
    \[\enstq{\gamma \in \Gamma}{\gamma \Pi = \Pi} = \enstq{\gamma \in \Gamma}{\gamma \text{ acts trivially on } V/W}.\]
\end{prop}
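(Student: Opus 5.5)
The plan is to lift a rational polyhedral fundamental domain from the quotient $V/W$ back to $V$, adding the whole subspace $W$ as a factor. Write $p : V \to V/W$ for the projection.

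\textbf{Step 1: the quotient cone.} First I check that $\widetilde{C}$ is an open convex cone in $V/W$ which is non-degenerate and full-dimensional. It is open because $p$ is an open map, and full-dimensional because $p$ is surjective. For non-degeneracy, note that $\overline{C} + W = \overline{C}$, since $\overline{C}$ is a closed convex cone containing the subspace $W$. Hence $\overline{\widetilde{C}} = p(\overline{C})$; this holds because $\overline{C}$ is saturated with respect to $W$, so its image under $p$ is closed. Any line $\R v \subset p(\overline{C})$ would then pull back to a line in $\overline{C} \cap (-\overline{C})$ not contained in $W$, contradicting the maximality of $W$. Since $W$ is defined over $\Q$, the image of the invariant lattice is a $\widetilde{\Gamma}$-invariant lattice in $(V/W)_\Q$. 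The group $\widetilde{\Gamma}$ acts faithfully on $V/W$ by its very definition. By hypothesis $(\widetilde{C}^+,\widetilde{\Gamma})$ is of polyhedral type, so the Lemma above (Looijenga, Li) gives a rational polyhedral fundamental domain $\widetilde{\Pi}$ for $\widetilde{C}^+$ under $\widetilde{\Gamma}$.

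\textbf{Step 2: lifting $\widetilde{\Pi}$.} Choose finitely many rational points $v_1,\dots,v_m \in C^+$ whose images span $\widetilde{\Pi}$; this is possible because $p(C^+) = \widetilde{C}^+$ and rational points lift to rational points. Choose also a rational basis $w_1,\dots,w_k$ of $W$, and set
\[\Pi := \operatorname{Cone}(v_1,\dots,v_m,\pm w_1,\dots,\pm w_k).\]
This cone is rational polyhedral and equals $p^{-1}(\widetilde{\Pi}) \cap (\text{a neighbourhood})$; more precisely, $\Pi = p^{-1}(\widetilde{\Pi})$, because $W \subset \Pi$. To see that $\Pi \subset C^+$, I need $W \subset C^+$. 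This follows because $W_\Q$ is dense in $W$ and $W \subset \overline{C}$, so $W = \operatorname{Conv}(W_\Q) \subset C^+$. Since $C^+$ is convex and stable under adding elements of $W$, the whole of $\Pi$ lies in $C^+$.

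\textbf{Step 3: covering and the weak fundamental domain property.} Since $\Pi = p^{-1}(\widetilde{\Pi})$ and $p$ is $\Gamma$-equivariant, we have $\gamma\Pi = p^{-1}(\tilde\rho(\gamma)\widetilde{\Pi})$ and $\Pi^\circ = p^{-1}(\widetilde{\Pi}^\circ)$. For covering, take $x \in C^+$. Then $p(x) \in \widetilde{C}^+ = \widetilde{\Gamma}\cdot\widetilde{\Pi}$, so $x \in \Gamma\cdot\Pi$; this gives $C^+ = \Gamma\cdot\Pi$. For the dichotomy, take $\gamma \in \Gamma$. If $\gamma\Pi \cap \Pi^\circ \neq \emptyset$, then $\tilde\rho(\gamma)\widetilde{\Pi} \cap \widetilde{\Pi}^\circ \neq \emptyset$. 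Since $\widetilde{\Pi}$ is a genuine fundamental domain, this forces $\tilde\rho(\gamma) = \id$, and therefore $\gamma\Pi = \Pi$. For the stabilizer, $\gamma\Pi = \Pi$ implies $\tilde\rho(\gamma)\widetilde{\Pi} = \widetilde{\Pi}$, which again forces $\tilde\rho(\gamma) = \id$. Conversely, if $\gamma$ acts trivially on $V/W$, then clearly $\gamma\Pi = \Pi$. This gives the claimed description of the stabilizer.

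\textbf{Main obstacle.} I expect Step 1 to be the delicate part: verifying that $\widetilde{C}$ is genuinely non-degenerate, and that $\overline{\widetilde{C}}$ and $\widetilde{C}^+$ correspond correctly to $p(\overline{C})$ and $p(C^+)$. The key facts here are closedness of $p(\overline{C})$, which uses $\overline{C} = \overline{C} + W$, and surjectivity of $p$ on rational points. Both rely on $W$ being defined over $\Q$.
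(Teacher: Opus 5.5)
Your argument is correct, and it is the standard argument behind Li's Proposition 3.6, which the paper only cites without proof: use $\overline{C}=\overline{C}+W$ to see that $\widetilde{C}$ is non-degenerate with $p(C^+)=\widetilde{C}^+$, apply Looijenga's lemma on $V/W$ to get a genuine fundamental domain $\widetilde{\Pi}$, and take $\Pi=p^{-1}(\widetilde{\Pi})$, generated by rational lifts together with $\pm$ a rational basis of $W$. Two small fixes: delete the stray phrase about ``a neighbourhood'', and note that it is the identity $C^+ + W = C^+$, which you only state at the end of Step 2, that guarantees the rational lifts $v_i$ can be taken inside $C^+$.
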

We conclude this section by recalling the definition of a well-clipped cone introduced by Gachet in her paper \cite{gachet2025well}. To this end, we also rely on the notations and definitions introduced in \cite[Section 2.A]{gachet2025well}.

\begin{defn}
    Let $\cA \subset V$ be an open non-degenerate cone. We say that
    \begin{enumerate}
        \item $\cA$ is \emph{self-dual} if there exists a positive definite quadratic form $\operatorname{tr} : V^{\otimes 2} \to \R$ that induces an identification of $\cA$ with its dual cone.
        
        \item $\cA$ is \emph{homogeneous} if the group of linear automorphisms of $V$ that preserve $\cA$ acts transitively on $\cA$.

        \item $\cA$ is \emph{$\R$-indecomposable} if, for any decomposition $V = V_1 \oplus V_2$ into $\R$-linear subspaces with both $\cA_i := \cA \cap V_i \neq \emptyset$, we have a strict inclusion $\cA_1 + \cA_2 \subsetneq \cA$.
    \end{enumerate}
    We denote by $\Aut(\cA)$ the subgroup of $\GL(V)$ consisting of linear transformations that preserve the cone $\cA$, and by $\Aut(\cA,V_\Z)$ its subgroup that also preserves the lattice $V_\Z$.
\end{defn}

\begin{ex}[Example 2.2 - \cite{gachet2025well}]
    Let $n \geq 3$ and $q$ the quadratic form on $\R^n$ of signature $(1,n-1)$. Fix $h \in \R^n$ such that $q(h) > 0$. The non-degenerate open cone
    \[\cH_n := \enstq{v \in \R^n}{q(v) > 0 ~\text{and}~ q(v,h) > 0}\]
    is an $\R$-indecomposable self-dual homogeneous cone. A cone that identifies up to isomorphism with $\cH_n$ is said to be \emph{of hyperbolic type}, or \emph{hyperbolic}.
\end{ex}

\begin{lem}[Lemma 2.6 - \cite{gachet2025well}]\label{lem_gachet26}
    Let $\cA$ be a self-dual homogeneous cone in $V$. Then, there exists a positive definite $V_\Z$-integral quadratic form $\operatorname{tr}$ on $V$ that is $\Aut(\cA,V_\Z)$-invariant and with respect to which $\cA$ is self-dual.
\end{lem}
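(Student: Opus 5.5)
The plan is to realize $\cA$ as the cone of squares of a Euclidean Jordan algebra and take $\operatorname{tr}$ to be (a rescaling of) the Jordan trace form. Invariance will come from the fact that the automorphisms of $\cA$ fixing the unit are Jordan algebra automorphisms. Integrality will come from rationality of the unit together with clearing denominators.

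\emph{Step 1 (Jordan structure).} Since $\cA$ is open, non-degenerate, self-dual and homogeneous, the Koecher--Vinberg theorem applies. For any $e\in\cA$ it gives a Euclidean Jordan algebra structure $(V,\circ)$ with unit $e$ such that $\cA$ is the interior of $\{x\circ x\}$. The bilinear form $\operatorname{tr}_e(x,y):=\operatorname{Tr}(L_{x\circ y})$ is positive definite, and $\cA$ is self-dual with respect to it. I would also decompose $\cA=\cA_1+\dots+\cA_k$ into $\R$-indecomposable factors, corresponding to simple ideals $V=\bigoplus V_i$. On each $V_i$, every $\operatorname{Aut}(\cA)$-compatible self-dualizing form is a positive multiple of $\operatorname{tr}_e|_{V_i}$. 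This leaves $k$ free scalars to adjust at the end.

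\emph{Step 2 (invariance).} Write $\Gamma:=\Aut(\cA,V_\Z)$. The stabilizer $K_e$ of $e$ in $\Aut(\cA)$ is exactly the Jordan automorphism group. It therefore preserves $\circ$, the operators $L_x$ up to conjugation, and hence $\operatorname{tr}_e$. So the key point is to choose $e$ fixed by $\Gamma$; then $\Gamma\subset K_e$ and $\operatorname{tr}_e$ is $\Gamma$-invariant. A permutation of isomorphic indecomposable factors by $\Gamma$ is harmless, because the scalars in Step 1 can be chosen equal along each $\Gamma$-orbit of factors. To find a fixed point, I would use that $\Gamma$ preserves the lattice and the cone, so it acts on the finite-dimensional space of forms for which $\cA$ is self-dual. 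I would then look for a $\Gamma$-fixed point of $\cA$ among the centers of mass of a $\Gamma$-orbit, using convexity of $\cA$ and compactness of the stabilizer $K_e$.

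\emph{Step 3 (integrality).} Once a fixed $e$ is found, it can be taken in $V_\Q$. This holds because the fixed locus $\cA^\Gamma$ is an open subcone of a rationally defined subspace, so it contains rational points. Then $\operatorname{tr}_e$ has rational Gram matrix in a $\Z$-basis of $V_\Z$, since the structure constants of $\circ$ are rational functions of $e$ and of the rational data preserved by $\Gamma$. Multiplying by a common denominator gives a $V_\Z$-integral form. The scaling keeps positivity, $\Gamma$-invariance and self-duality of $\cA$, which proves the lemma.

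The main obstacle is Step 2, the existence of a $\Gamma$-fixed point in $\cA$. This is immediate when $\Gamma$ is finite, by averaging any rational point of $\cA$ over its orbit. In general it requires showing that $\Gamma$ lies in a compact subgroup of $\Aut(\cA)$. So I would first establish that $\Gamma$, being discrete and preserving a positive definite form, is finite. I expect this finiteness to be the delicate point: for a hyperbolic cone $\cH_n$, the integral automorphism group can be infinite, and an infinite discrete group cannot lie in a compact group, so Step 2 cannot work as written there. At this step I would check carefully in which sense the invariance is meant.
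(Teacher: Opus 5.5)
The paper only quotes this lemma from Gachet, so I am judging your proposal on its own. It has a genuine gap, and it sits exactly where you suspected.

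\textbf{Step 2 cannot be completed.} For the full group $\Gamma=\Aut(\cA,V_\Z)$ the conclusion is false as soon as $\Gamma$ is infinite. A positive definite $\Gamma$-invariant form $\operatorname{tr}$ would give $\Gamma\subset O(\operatorname{tr})\cap\GL(V_\Z)$. That is a discrete subgroup of a compact group, hence finite.

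Your proposed remedy is circular. You suggest first showing that $\Gamma$, ``being discrete and preserving a positive definite form'', is finite, but preserving such a form is the very conclusion you are trying to reach.

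Here is a concrete counterexample. Take $V_\Z=\Z^3$, $q=x_0^2-x_1^2-x_2^2$, and let $\cA$ be the component of $\{q>0\}$ containing $(1,0,0)$. It is homogeneous and self-dual for the standard Euclidean form. The integral isometry of $q$
\[
M=\begin{pmatrix}3&2&-2\\ 2&1&-2\\ 2&2&-1\end{pmatrix}
\]
sends $(1,0,0)$ to $(3,2,2)\in\cA$, so it preserves $\cA$. It is unipotent, since $(M-I)^3=0$, and $M\neq I$, so it has infinite order.

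Consequently $\cA$ has no $\Gamma$-fixed point: such a point $e$ would force $\Gamma\subset K_e$, which is compact. Also, ``centres of mass of an orbit'' make no sense for infinite orbits.

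What Steps 1--2 actually prove is the version for a \emph{finite} subgroup $G\subseteq\Aut(\cA,V_\Z)$, which is what quotients by finite groups require. Averaging a rational point of $\cA$ over $G$ gives a $G$-fixed $e\in\cA\cap V_\Q$. Then $G\subseteq K_e=\Aut(V,\circ_e)$, so $G$ preserves $\operatorname{tr}_e$.

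\textbf{Step 3 has a second gap.} Knowing $e\in V_\Q$ does not make $\operatorname{tr}_e$ rational. The product $\circ_e$ is determined by $\cA$, which may sit irrationally in $V$. For a finite group there is also no ``rational data preserved by $\Gamma$'' to fall back on.

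For instance, in $V_\Z=\Z^2$ let $\cA$ be the open cone spanned by $v_1=(1,\alpha)$ and $v_2=(1,\beta)$, with $\alpha>0>\beta$ algebraically independent.
\begin{itemize}
\item $\cA$ is self-dual and homogeneous, and $\Aut(\cA,V_\Z)$ is trivial, so invariance plays no role here.
\item A positive definite form $Q$ makes $\cA$ self-dual only if $Q(v_1,v_2)=0$.
\item In terms of the Gram entries $a,b,c$ of $Q$, this reads $a+b(\alpha+\beta)+c\alpha\beta=0$, which has no nonzero rational solution.
\end{itemize}

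So you need an explicit rationality hypothesis on $\cA$. For example, assume the Jordan structure for some rational unit is defined over $\Q$; this holds for ample cones of abelian varieties and positive cones of Beauville--Bogomolov forms. Under that hypothesis, the structure for any $e\in\cA\cap V_\Q$ is a rational isotope, $\operatorname{tr}_e$ has rational Gram matrix, and clearing denominators finishes the argument.

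A minor point: in Step 1, the self-dualizing forms on $V_i$ are multiples of $\operatorname{tr}_e|_{V_i}$ only among $K_e$-invariant forms, since every $\operatorname{tr}_{e'}$ with $e'\in\cA$ also self-dualizes. You do not need that claim anyway.
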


\begin{defn}[Well-clipped cone, Definition 3.1 - \cite{gachet2025well}]
Let $\cA \subset V$ be a self-dual homogeneous cone. A full-dimensional convex cone $\cC \subset V$ is said to be \emph{well-clipped in $\cA$} if there exists a family of hyperplanes $(H_i)_{i \in I}$ such that
\[\cC^\circ = \cA \cap \bigcap_{i \in I} H_{i,+},\]
where $H_{i,+}$ is one of the two connected components of $V \backslash H_i$, and the following conditions are satisfied:

\begin{enumerate}
\item Decompose $\cA = \bigoplus_{j\in J} \cA_j$ into a direct sum of $\R$-indecomposable
summands. Every hyperplane $H_i$ is of the form
\[H_i = H_i \cap \operatorname{Span}_\R \cA_{j(i)} \oplus \bigoplus_{k \neq j(i)} \operatorname{Span}_\R(\cA_k),
\]
and the remaining cone $\mathcal A_{j(i)}$ is of hyperbolic type and is defined over $V_{\mathbb Q}$.
Moreover, fix an $\Aut(\cA,V_\Z)$-invariant and $V_\Z$-integral quadratic form $q$
which is a direct sum of hyperbolic forms on the linear spans of the $\left(\cA_{j(i)}\right)_{i \in I}$ and of a positive definite form as in Lemma \ref{lem_gachet26} on the other summands' spans.

\item For every $i\in I$, the $q$-orthogonal reflection $\sigma_i$ with fixed hyperplane $H_i$
preserves the lattice $V_\Z$.

\item For every $i,k \in I$ such that $H_i \neq H_k$ and for any vectors $e_i, e_k$ perpendicular to $H_i, H_k$ with negative $q$-squares satisfy $q(e_i,e_k)\ge 0$.
\end{enumerate}

We call a cone \emph{well-clipped} if it is well-clipped in some self-dual homogeneous cone
$\cA \subset V$.
\end{defn}

\begin{ex}\label{ex_wellclipped}
    To illustrate this concept, Gachet provided a list of interesting examples of well-clipped cones \cite[Example 3.6]{gachet2025well}, to which we refer for further details. For the purposes of this paper, we restrict ourselves to those that are relevant to our setting.
\begin{enumerate}
    \item The closed movable cone of a projective irreducible holomorphic symplectic manifold is well-clipped.
    \item The closed movable cone of a projective primitive symplectic variety with terminal $\Q$-factorial singularities is well-clipped.
    \item The closed movable cone of an abelian variety is well-clipped.
    \item The nef cone (which coincides with the closed movable cone) of a smooth projective surface $S$ underlying a klt pair $(S,\Delta)$ with $K_S + \Delta \equiv 0$ is well clipped.
\end{enumerate}
\end{ex}

\subsection{Birational geometry}\label{sect_birgeom}
Although some of the definitions below make sense in a more general sense, we will only be working over the field $\C$ of complex numbers. Using the terminology of \cite[Section 2]{Kollár_Mori_1998}, we recall some definitions from birational geometry in order to introduce the objects needed later. For extensions to the case of pairs with an $\R$-divisor as boundary, we also refer to \cite{fujino2017foundations}. Throughout this paper, a morphism $f : X \to S$ between varieties will be called a \emph{fibration} if it is proper, surjective, and has connected fibres.

\begin{defn}
    Let $f : X \to S$ be a fibration between $\Q$-factorial normal varieties. Two divisors $D$ and $D'$ on $X$ are \emph{$f$-linearly equivalent}, written $D \sim_{f} D'$ or $D \sim_S D'$, if there exists a divisor $B$ on $S$ such that
    \[D - D' \sim f^*B\]
    We denote by $\Pic(X/S)$ the relative Picard group (of $f$), and for $\K = \Q$ or $\R$, we set
    \[\Pic(X/S)_{\K} = \Pic(X/S) \otimes_{\Z} \K.\]
\end{defn}

\begin{defn}
    Let $f : X \to S$ be a fibration between $\Q$-factorial normal varieties. A $\Q$-divisor $D$ on $X$ is
    \begin{itemize}
        \item \emph{$f$-effective}, or \emph{effective over $S$}, if for some sufficiently divisible natural number $m$, the coherent $\cO_S$-module $f_*\cO_X(mD)$ has positive rank.
        \item \emph{$f$-movable}, or \emph{movable over $S$}, if for some sufficiently divisible natural number $m$, we have 
        \[\codim(\Supp(\coker \operatorname{ev})) \geq 2,\] 
        where $\operatorname{ev} : f^*f_*\cO_X(mD) \to \cO_X(mD)$ is the natural evaluation map.
        \item \emph{$f$-semiample}, or \emph{semiample over $S$}, if for some sufficiently divisible natural number $m$, the evaluation map ev is surjective.
    \end{itemize}
    An $\R$-divisor $D$ is $f$-effective (respectively, $f$-movable or $f$-semiample) if it can be written as $D = \textstyle\sum_i a_i D_i$ where the $D_i$ are $f$-effective (respectively, $f$-movable or $f$-semiample) $\Q$-divisors, and the $a_i$ are non-negative real numbers.
\end{defn}
Note that a $\Q$-divisor $D$ on $X$ is $f$-effective if for some sufficiently divisible natural number $m$, the restriction $(mD)\vert_{X_\eta}$ of $mD$ on the generic fibre is effective, where $X_{\eta}$ is the generic fibre of $f$.

\begin{lem}[Lemma 2.2 - \cite{horing2024relative}]\label{lem_effrel}
    Let $f : X \to S$ be a fibration between normal $\Q$-factorial varieties and $D$ an $\R$-divisor on $X$. If $D$ is $f$-effective, then there exists an effective $\R$-divisor $D'$ on $X$ such that 
    \[D \sim_{\R,f} D'.\]
\end{lem}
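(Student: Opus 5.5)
The plan is to reduce to the case of a $\Q$-divisor and then use the generic fibre. By definition, an $f$-effective $\R$-divisor can be written as $D = \sum_i a_i D_i$ with $a_i \geq 0$ real and each $D_i$ an $f$-effective $\Q$-divisor. Suppose each $D_i$ satisfies $D_i \sim_{\Q,f} D_i'$ with $D_i'$ effective. Then $D' := \sum_i a_i D_i'$ is effective and $D \sim_{\R,f} D'$. Indeed,
\[D - D' = \sum_i a_i (D_i - D_i'),\]
and each $D_i - D_i'$ is $\Q$-linearly equivalent to the pullback of a $\Q$-Cartier divisor from $S$. So $D-D'$ is an $\R$-linear combination of principal divisors and pullbacks from $S$, which is exactly $\R$-linear equivalence over $S$. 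It therefore suffices to treat a $\Q$-divisor $D$.

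Now let $D$ be an $f$-effective $\Q$-divisor. Choose $m$ sufficiently divisible so that $mD$ is an integral Cartier divisor (possible since $X$ is $\Q$-factorial) and $f_*\cO_X(mD)$ has positive rank. A coherent sheaf of positive rank on the integral scheme $S$ has nonzero generic stalk. Restricting to a nonempty affine open $U \subset S$, we get $H^0(f^{-1}(U), \cO_X(mD)) = H^0(U, f_*\cO_X(mD)) \neq 0$. Pick a nonzero section $\sigma$ there. It is a rational function $\phi$ on $X$ with $(\operatorname{div}\phi + mD)\vert_{f^{-1}(U)} \geq 0$.

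Set $E := \operatorname{div}\phi + mD$. Its negative part $E^-$ is supported on $f^{-1}(S\setminus U)$. The remaining task is to remove $E^-$ by adding a pullback from $S$. For this I would choose an effective Cartier divisor $B$ on $S$ (for instance, a sufficiently multiple of an ample divisor vanishing along $S\setminus U$, using that $S$ is quasi-projective) such that $\Supp f^*B \supset f^{-1}(S\setminus U)$. Then for $k \gg 0$ the divisor $E + k f^*B$ is effective. This uses two facts: each prime component of $E^-$ lies in $f^{-1}(S\setminus U)$, hence appears in $f^*B$ with positive coefficient, and there are only finitely many such components.

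Finally, $D' := \frac{1}{m}(E + k f^*B)$ is effective, and
\[D - D' = -\tfrac{1}{m}\operatorname{div}\phi - \tfrac{k}{m} f^*B,\]
so $D \sim_{\Q,f} D'$. The main (mild) obstacle is this step of absorbing the negative part over $S \setminus U$. It needs $B$ to be Cartier with support containing $S \setminus U$ whenever that set has codimension one. If $S$ is not $\Q$-factorial, I would take $B$ to be a general member of a very ample linear system containing $S\setminus U$: choose a hypersurface section through the closed set $S \setminus U$ in a projective embedding. Such a $B$ is Cartier, and its pullback is well defined.
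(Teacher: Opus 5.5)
Your proof is correct. The paper does not prove this lemma itself; it only cites \cite[Lemma 2.2]{horing2024relative}. Your argument is the standard one: reduce to $\Q$-divisors, take a nonzero section of $f_*\cO_X(mD)$ over a nonempty open $U\subset S$, and then remove the vertical negative part of $\operatorname{div}\phi+mD$ by adding $k f^*B$, where $B$ is an effective Cartier divisor on $S$ whose support contains the images of the components of that negative part. The paper uses the same device in Step 1 of the proof of Theorem \ref{thm_existrpc}, where it writes $-E\sim_{\Q,S}E'$ with $E'\geq 0$ for a vertical prime divisor $E$.

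One adjustment: the lemma assumes $S$ normal and $\Q$-factorial, but not quasi-projective. So the hypothesis you should use to build $B$ is the $\Q$-factoriality of $S$, not a projective embedding. Your closing remark about $S$ not being $\Q$-factorial addresses a case that cannot occur. Concretely, for each of the finitely many components $P$ of $E^-$, the closed set $f(P)\subsetneq S$ lies in some prime divisor $\Gamma_P$. To find $\Gamma_P$, take the closure of a component of the zero locus of a nonzero function vanishing on $f(P)$ in an affine chart. A Cartier multiple of $\sum_P\Gamma_P$ then serves as $B$. In all of the paper's applications $S$ is quasi-projective, so your hypersurface-section choice of $B$ also works there.
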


\begin{defn}
    Let $f : X \to S$ be a fibration between $\Q$-factorial normal varieties. Two divisors $D$ and $D'$ on $X$ are \emph{$f$-numerically equivalent}, written $D \equiv_{f} D'$ or $D \equiv_S D'$, if $D \cdot C = D' \cdot C$ for every curve $C \subset X$ such that $f(C)$ is a point. We denote by $N^1(X/S)$ the relative Néron-Severi group (of $f$), and for $\K = \Q$ or $\R$, we set
    \[N^1(X/S)_\K = N^1(X/S) \otimes_{\Z} \K.\]
\end{defn}

\begin{defn}\label{def_ktrivfibrespace}
Let $X$ be a normal complex variety and let $\Delta$ be an $\R$-divisor on $X$. Then $(X,\Delta)$ is called a \emph{(log) pair} if $K_X + \Delta$ is $\R$-Cartier. The divisor $\Delta$ is called a \emph{boundary divisor}, or simply a \emph{boundary}. A pair $(X,\Delta)$ is a Calabi-Yau pair if $K_X + \Delta \equiv 0$. A \emph{$K$-trivial fiber space} is a $\Q$-factorial pair $(X,\Delta)$ endowed with a fibration $f : X \to S$ over a quasi-projective variety $S$ such that $K_X + \Delta \equiv_{S} 0$. If $(X,\Delta)$ is klt (respectively, terminal), we say that $f$ is a klt (respectively, terminal) $K$-trivial fiber space.
\end{defn}

\begin{defn}
    Let $f : X \to S$ be a fibration between $\Q$-factorial normal varieties. Inside the vector space $N^1(X/S)_\R$, we define the following convex cones.
    \begin{itemize}
        \item The \emph{relative effective cone} $\Eff(X/S)$, generated by $f$-effective divisors.
        \item The \emph{relative movable cone} $\Mov(X/S)$, generated by $f$-movable divisors, and its closure $\clMov(X/S)$. We set 
        \[\clMov^e(X/S) := \clMov(X/S) \cap \Eff(X/S),\]
        and we define the \emph{modified movable cone} $\Mov^+(X/S)$ as the convex hull of $\clMov(X/S) \cap N^1(X/S)_\Q$ in $N^1(X/S)_\R$.
        \item The \emph{relative ample cone} $\Amp(X/S)$, generated by $f$-ample divisors, and its closure $\Nef(X/S)$, called the \emph{relative nef cone}. We set 
        \[\Nef^e(X/S) := \Nef(X/S) \cap \Eff(X/S),\]
        and we define the \emph{modified nef cone} $\Nef^+(X/S)$ as the convex hull of $\Nef(X/S) \cap N^1(X/S)_\Q$ in $N^1(X/S)_\R$. 
    \end{itemize}
\end{defn}

\begin{defn}
    A birational map $g : X \dashrightarrow Y$ between normal projective varieties is a \emph{contraction} if its inverse $g^{-1}$ does not contract a divisor. In addition, if for two fibrations $f_X : X \to S$ and $f_Y : Y \to S$, we have $f_X = f_Y \circ g$ on the locus where $g$ is defined, we say that $g$ is a \emph{birational contraction over $S$}.
\end{defn}

\begin{defn}
    Let $(X_1, \Delta_1)$ and $(X_2, \Delta_2)$ be two $\Q$-factorial klt pairs. We say that a birational map $\mu : X_1 \dashrightarrow X_2$ is a \emph{small $\Q$-factorial modification of $(X_1, \Delta_1)$ over $S$} if $\mu$ is an isomorphism in codimension $1$ over $S$ and if $\mu_*\Delta_1 = \Delta_2$, where $\mu_*\Delta_1$ is the birational transform of $\Delta_1$ by $\mu$.
\end{defn}

\begin{defn}
    Let $f : (X, \Delta) \to S$ be a fibration between normal quasi-projective varieties. A \emph{pseudoautomorphism of $(X,\Delta)$} is a birational map $\mu : X \dashrightarrow X$ over $S$ that is an isomorphism in codimension $1$ and that satisfies $\mu_*\Delta = \Delta$. The set of pseudoautomorphisms of $(X,\Delta)$ forms a group denoted by $\psaut(X/S,\Delta)$. When $\Delta = 0$, we simply write $\psaut(X/S)$. The set of automorphisms of $(X,\Delta)$ preserving the boundary $\Delta$ under birational transform will be denoted by $\Aut(X/S,\Delta)$.
\end{defn}

\begin{defn}\label{def_minmodel}
    Let $X \to S$ be a fibration with $X$ a normal projective variety. Let $\Delta$ be a divisor on $X$ such that $(X,\Delta)$ is a log-canonical pair. For every birational contraction $\phi : X \dashrightarrow Y$ of normal projective varieties over $S$, we set $\Delta_Y := \phi_* \Delta$. A \emph{minimal model} $(Y,\Delta_Y)$ of $(X,\Delta)$ over $S$ is a pair associated to a birational contraction $\phi : (X,\Delta) \dashrightarrow (Y,\Delta_Y)$ such that
    \begin{itemize}
        \item $Y$ is $\Q$-factorial,
        \item $K_Y + \Delta_Y$ is nef over $S$ (where $K_Y$ is the canonical divisor of $Y$), and
        \item $a(E,X,\Delta) > a(E,Y,\Delta_Y)$ for all $\phi$-exceptional divisors.
    \end{itemize}
    We will say that a minimal model $(Y,\Delta_Y)$ over $S$ is \emph{a good minimal model} if, in addition, $K_Y + \Delta_Y$ is semiample over $S$. A minimal model $(Y, \Delta_Y)$ together with the birational contraction $\phi$ is called a \emph{marked minimal model} of $(X, \Delta)$ over $S$.
\end{defn}

\begin{rmk}\label{rmk_allgood}
    With the notations as above, note that if $(X,\Delta)$ has a good minimal model over $S$, then any minimal model of $(X,\Delta)$ over $S$ is good by \cite[Remark 2.7]{birkar2012existence}.
\end{rmk}

\begin{defn}\label{def_klteffpert}
    Let $(X,\Delta)$ be a $\Q$-factorial klt pair. We say that
    \begin{itemize}
        \item [(i)] \emph{(good) minimal models exist for effective klt perturbations of $(X,\Delta)$} if, for every $\R$-divisor $D$ on $X$ such that $(X,\Delta + D)$ is klt and $\kappa(X,K_X+\Delta + D) \geq 0$, the pair $(X,\Delta + D)$ has a (good) minimal model;
        \item [(ii)] \emph{(good) minimal models exist for effective klt pairs on $X$} if (good) minimal models exist for effective klt perturbations of $(X,0)$. 
    \end{itemize}
     Here, $\kappa(X,B)$ denotes the Iitaka dimension of an $\R$-Cartier divisor $B$ on $X$.
\end{defn}
It will be useful later on to use the following theorem, originally due to \cite{hacon2013existence} and extended to the case of $\R$-divisors by \cite{li2023relative}.

\begin{thm}[Theorem 2.12 - \cite{hacon2013existence}, Theorem 2.23 - \cite{li2023relative}]\label{thm_hxli}
    Let $f : X \to S$ be a surjective projective morphism and let $(X,\Delta)$ be a klt pair. Assume that for a very general point $s \in S$, the pair $(X_s, \Delta_s)$ has a good minimal model. Then $(X,\Delta)$ has a good minimal model over $S$. 
\end{thm}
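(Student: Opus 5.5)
The plan is to first prove the statement when $\Delta$ is a $\Q$-divisor, following Hacon–Xu \cite{hacon2013existence}, and then to reduce the $\R$-divisor case to it by a Shokurov-polytope argument, as in \cite{li2023relative}. Throughout, we may shrink $S$ and pass to the relative setting over a projective compactification. The standing reduction is this: since the hypothesis concerns only a very general fibre, we may throw away countably many proper closed subsets of $S$. So any countable family of "generic" conditions on fibres can be imposed at once, for example invariance of plurigenera and restrictions of $f_*\cO_X(m(K_X+\Delta))$ commuting with base change.

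\textbf{$\Q$-divisor case.} Let $F$ be a very general fibre. Since $(F,\Delta_F)$ has a good minimal model, we get $\kappa_\sigma(K_F+\Delta_F)=\kappa(K_F+\Delta_F)\ge 0$. In particular $K_X+\Delta$ is $f$-effective, and by Lemma \ref{lem_effrel} we may assume $K_X+\Delta\sim_{\Q,S}D\ge 0$.

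Next, after a log resolution, take the relative Iitaka fibration $\phi: X'\to Z$ over $S$ of $K_X+\Delta$. On a very general fibre $G$ of $\phi$ we have $\kappa(K_G+\Delta_G)=0$. The key point is that $\kappa_\sigma$ of the restriction is also $0$; this follows from the equality $\kappa_\sigma=\kappa$ on $F$ together with the behaviour of $\kappa_\sigma$ under restriction to very general fibres. By Gongyo/Druel's result that klt pairs with $\kappa_\sigma=0$ have good minimal models, $(X',\Delta')$ then has a good minimal model over $Z$, with $K+\Delta'\sim_{\Q,Z}0$ on it.

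We then apply the canonical bundle formula (Ambro, Fujino–Mori) to write $K+\Delta'\sim_\Q \phi^*(K_Z+B_Z)$ on that model, with $(Z,B_Z)$ klt and $K_Z+B_Z$ big over $S$. By \cite{BCHM} (big case), $(Z,B_Z)$ has a good minimal model over $S$. Pulling the resulting semiample divisor back shows that $K_X+\Delta$ has a Zariski decomposition over $S$ with semiample positive part. Lai's argument, i.e.\ running the MMP with scaling over $S$ and using that the negative part is contracted, then produces a good minimal model of $(X,\Delta)$ over $S$.

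\textbf{$\R$-divisor case.} Let $V$ be the smallest rational affine subspace of divisors supported on $\Supp\Delta$ that contains $\Delta$. Fix a good minimal model $(Y,\Delta_Y)$ of $(F,\Delta_F)$, with ample model $F\dashrightarrow T$. By Shokurov's polytope theorem, for rational $B\in V$ close to $\Delta$ the divisor $K_Y+B_Y$ is nef and $\equiv_T 0$. Relative abundance for numerically trivial klt pairs then makes $K_Y+B_Y$ semiample over $T$, and since $K_Y+\Delta_Y$ is pulled back from an ample divisor on $T$, it is semiample. Hence we may write
\[
K_X+\Delta=\sum_i r_i\,(K_X+\Delta_i),\qquad r_i>0,\ \sum_i r_i=1,
\]
with each $\Delta_i\in V$ a rational boundary, $(X,\Delta_i)$ klt, and $(F,\Delta_{i}|_F)$ having a good minimal model. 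Since only countably many $\Delta_i$ are involved, this holds on a very general fibre simultaneously for all of them.

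By the $\Q$-case, each $(X,\Delta_i)$ has a good minimal model over $S$. Now run a $(K_X+\Delta)$-MMP over $S$ with scaling. Choosing the $\Delta_i$ close enough to $\Delta$, each step is also $(K+\Delta_i)$-trivial or $(K+\Delta_i)$-negative, again by Shokurov's polytope. The MMP terminates because it is simultaneously an MMP for the finitely many $\Q$-pairs, all of which admit good minimal models. On the output, $K+\Delta$ is a positive combination of divisors $K+\Delta_i$ that are semiample over $S$, using Remark \ref{rmk_allgood}, and hence is itself semiample.

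\textbf{Main obstacle.} I expect the hardest step to be the transfer $\kappa_\sigma(F)=\kappa(F)\Rightarrow \kappa_\sigma=0$ on the general fibre of the relative Iitaka fibration, together with controlling the very general conditions. If this cannot be done directly, I would try instead to use Nakayama's $\sigma$-decomposition over $S$, restricted to $F$.
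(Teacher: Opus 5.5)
The paper gives no proof of this statement. It is quoted from \cite{hacon2013existence} for $\Q$-boundaries and \cite{li2023relative} for $\R$-boundaries, so your sketch has to stand on its own, and it has genuine gaps.

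\textbf{The $\Q$-case.} Your skeleton (relative Iitaka fibration, canonical bundle formula, BCHM on the base, Lai's argument) is the standard one, but you have misplaced the difficulty. The step you call the main obstacle is actually easy. Over a very general $s$, identify $\phi$ with the Iitaka fibration of $F$. Then the good minimal model of $F$, restricted over a general point of its ample model, gives a klt model of $(G,\Delta_G)$ on which $K+\Delta\sim_\Q 0$. Hence $\kappa_\sigma(K_G+\Delta_G)=0$.

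The real gap is the next sentence. Druel's and Gongyo's theorems are about a single projective pair. They give at most a good minimal model of $(G,\Delta_G)$, which you essentially already had. They do not give a good minimal model of $(X',\Delta')$ over $Z$. Passing from very general fibres to that relative statement is exactly the relative-Kodaira-dimension-zero case of the theorem you are proving, and it is where Lai's argument is needed. That argument runs an MMP with scaling over $Z$, controlled over the generic point of $Z$ by the fibre's minimal model. It then contracts the leftover effective divisor $\sim_{\Q,Z}K+\Delta'$, which is vertical and degenerate (very exceptional) over $Z$. As written, this step is asserted, not proved.

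\textbf{The $\R$-reduction.} There are two further problems.
\begin{enumerate}
\item Your neighbourhood of $\Delta$ in $V$ comes from Shokurov's polytope on the minimal model of one fixed fibre $F$, and the $\Delta_i$ are chosen from it. To apply the $\Q$-case to $(X,\Delta_i)$ you need good minimal models on very general fibres. Countability of the $\Delta_i$ does not give this, because the neighbourhood attached to a fibre may shrink as the fibre varies. You need a uniformity statement. For example: for very general $s$, the pairs $(X_s,\Delta_s)$ are abstractly isomorphic to $(X_{\overline{\eta}},\Delta_{\overline{\eta}})$, compatibly with the components of $\Delta$. This is the device used in Step 3 of the proof of Theorem \ref{thm_A}.
\item More seriously, the termination argument fails.
  \begin{itemize}
  \item Shokurov's polytope controls $K+B$ near a boundary for which $K+\Delta$ is nef. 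Along a $(K+\Delta)$-MMP, $K+\Delta$ is not nef, and $\sum_i r_i(K+\Delta_i)\cdot R<0$ only forces $(K+\Delta_i)\cdot R<0$ for some $i$.
  \item Nothing prevents rays with $(K+\Delta)\cdot R$ arbitrarily close to $0$ on which some $K+\Delta_i$ is positive. So no choice of the $\Delta_i$ made in advance controls an a priori infinite sequence of steps.
  \item Even granting the sign condition, a sequence containing $(K+\Delta_i)$-trivial steps, with no scaling adapted to $K+\Delta_i$, is not a $(K+\Delta_i)$-MMP with scaling. Existence of good minimal models is only known to force termination of an MMP with scaling.
  \end{itemize}
\end{enumerate}

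So you never produce a minimal model of $(X,\Delta)$. Your final step is fine once one exists. Choose the $\Delta_i$ after the minimal model $Y$; Shokurov's polytope on $Y$ then makes $Y$ a minimal model of each $(X,\Delta_i)$, and Remark \ref{rmk_allgood} gives semiampleness. Existence itself needs a real argument, for instance running the Lai--Hacon--Xu fibration argument directly with real coefficients.
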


\section{Group schemes associated to K-trivial fiber spaces}\label{sect_groupscheme}
In this section, we recall some facts concerning both the automorphism functor and the Picard functor. These results will be useful to us for $K$-trivial fibrations thanks to the work of \cite[Section 4]{li2023relative}, which we will recall. For a more complete overview, see also \cite{fantechi2005fundamental}. In the following, if $S$ is a scheme and if $X,T$ are two $S$-schemes, we denote by $X_T$ the $T$-scheme $X \times_S T$. If $F'/F$ is a field extension, for $S = \Spec(F)$ and $T = \Spec(F')$, we will write more simply $X_{F'}$ or $X \otimes_F F'$ to denote $X_T$. Following the notation of \cite{li2023relative}, for a scheme $X$ and a field $F$, we write $[x] \in X(F)$ for an $F$-valued point of $X$ and $x \in X$ for its image (i.e. the underlying point of $X$). Although we also use square brackets to denote numerical equivalence classes of divisors, the context should prevent any confusion.

\begin{defn}
    Let $X$ be a proper scheme over a field $F$. Denote by (Sch/$F$) the category of schemes over $F$. The \emph{automorphism functor} of $X$ is the contravariant functor
    \[\mathfrak{A}ut_{X/F} : (\text{Sch}/F ) \to (\text{Grp}), ~~\mathfrak{A}ut_{X/F}(T) := \Aut(X_T/T)\]
    where $\Aut(X_T/T)$ is the group of all automorphisms of $X_T$ over $T$. If $i : Z \to X$ is a closed subscheme of $X$, we define the subfunctor $\mathfrak{A}ut_{X/F, Z}$ of $\mathfrak{A}ut_{X/F}$ by
    \[\mathfrak{A}ut_{X/F, Z}(T) := \enstq{g \in \mathfrak{A}ut_{X/F}(T)}{g \circ i_T : Z_T \to X_T ~\text{factors through}~ i_T}.\]
\end{defn}

The first statement of the following theorem is \cite[Theorem 3.7]{matsumura1967representability}, and the second one follows from \cite[Theorem II.1.3.6]{demazure70groupes}.
\begin{thm}\label{thm_autexist}
    Let $X$ be a proper scheme over a field $F$ and let $Z \to X$ be a closed subscheme of $X$. Then, the following statements hold.
    \begin{itemize}
        \item The functor $\mathfrak{A}ut_{X/F}$ is representable by a group scheme \textbf{Aut}$_F(X)$ locally of finite type over $F$.
        \item The functor $\mathfrak{A}ut_{X/F, Z}$ is representable by a group scheme \textbf{Aut}$_F(X, Z)$ which is a closed subgroup scheme of \textbf{Aut}$_F(X)$.
    \end{itemize}
\end{thm}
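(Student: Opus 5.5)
The statement is Theorem \ref{thm_autexist}, which the paper attributes to Matsumura--Oort and Demazure--Gabriel. The proof I would give reduces to representability of Hilbert-type functors. I will treat the two bullets separately. The second bullet will be derived from the first by showing that the stabiliser condition is a closed condition.

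For the first bullet, first assume $X$ is projective over $F$. An automorphism $g$ of $X_T$ over $T$ is determined by its graph $\Gamma_g \subset (X\times_F X)_T$. This graph is a closed subscheme, flat and proper over $T$, whose two projections to $X_T$ are isomorphisms. So $\mathfrak{A}ut_{X/F}$ is a subfunctor of $\mathrm{Hilb}_{X\times X/F}$, which is representable by Grothendieck, as a disjoint union of projective schemes indexed by Hilbert polynomials. The condition that $p_1$ and $p_2$ restricted to the universal family are isomorphisms is open on the base. This follows from properness together with the openness of the locus where a proper morphism has finite fibres and is flat and unramified of degree one. Hence $\mathfrak{A}ut_{X/F}$ is represented by an open subscheme of $\mathrm{Hilb}$, and so it is locally of finite type. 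The group law comes from composition of graphs via Yoneda. For $X$ only proper, I would not use Hilbert schemes of projective schemes. Instead I would follow Matsumura--Oort: use representability of the Hilbert functor for proper algebraic spaces (Artin), or argue with the deformation-theoretic criteria. This step, representability in the merely proper case, is where I expect the main obstacle; in practice I would cite \cite[Theorem 3.7]{matsumura1967representability}. For all uses in this paper the varieties are projective, so the Hilbert-scheme argument suffices.

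For the second bullet, let $G=\mathbf{Aut}_F(X)$ and let $g_{\mathrm{univ}}$ be the universal automorphism of $X_G$. Consider the closed subscheme $Z_G\subset X_G$ and the composite morphism
\[
Z_G \xrightarrow{\ g_{\mathrm{univ}}\circ i_G\ } X_G .
\]
A $T$-point $g$ lies in $\mathfrak{A}ut_{X/F,Z}(T)$ exactly when the pullback to $T$ of this composite factors through $Z_T$. Equivalently, the composite $\mathcal{I}_{Z}\to \mathcal{O}_{X}\to g_*\mathcal{O}_{Z}$ must vanish after base change to $T$. Since $Z\to G$ is proper and $\mathcal{O}_Z$ is coherent, the vanishing locus of a homomorphism $\mathcal{I}\to \mathcal{F}$ of coherent sheaves, with $\mathcal{F}$ flat over the base, is represented by a closed subscheme. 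This is the standard result of \cite[Theorem II.1.3.6]{demazure70groupes} (cf. EGA III, Hom-functors). Here $\mathcal{O}_{Z_G}$ is flat over $G$ because $Z_G=Z\times_F G$ is a base change over a field. So $\mathfrak{A}ut_{X/F,Z}$ is represented by a closed subscheme of $G$.

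It remains to show that this closed subscheme is a subgroup scheme. On $T$-points, the factorisation condition is stable under composition. It is also stable under inverses, because the induced endomorphism of $Z_T$ is a closed immersion into itself compatible with $g$. Both stabilities are checked on $T$-points, and Yoneda then gives the group structure.
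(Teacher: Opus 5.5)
The paper gives no proof of its own here; it quotes \cite[Theorem 3.7]{matsumura1967representability} and \cite[Theorem II.1.3.6]{demazure70groupes}. Your route is more explicit. For projective $X$ you realise $\mathfrak{A}ut_{X/F}$ as the open subfunctor of graphs in $\mathrm{Hilb}_{X\times X/F}$, which is Grothendieck's argument; in the merely proper case you rely on Matsumura--Oort, as the paper does. You obtain closedness of the stabiliser from the vanishing-locus lemma for a map into a sheaf flat over the base of a proper morphism, applied to $\mathcal{I}_{Z_G}\to (g_{\mathrm{univ}}\circ i_G)_*\mathcal{O}_{Z_G}$. These steps are sound. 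The target is flat over $G$ because it is the structure sheaf of a closed subscheme isomorphic to $Z_G$. Its formation commutes with base change because $g_{\mathrm{univ}}\circ i_G$ is a closed immersion.

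The gap is in the final step. The closed subscheme $H\subset \mathbf{Aut}_F(X)$ you construct represents, a priori, only the \emph{submonoid} functor $\{g : g(Z_T)\subseteq Z_T\}$. To get stability under inversion, you need the induced map $h\colon Z_T\to Z_T$ with $i_T\circ h=g\circ i_T$ to be an isomorphism, so that $g^{-1}\circ i_T=i_T\circ h^{-1}$. Knowing that $h$ is a closed immersion of $Z_T$ into itself does not give this. In general such a map need not be invertible: the surjection $k[x_1,x_2,\dots]\to k[x_1,x_2,\dots]$ given by $x_1\mapsto 0$, $x_{i+1}\mapsto x_i$ has non-zero kernel.

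A finiteness argument is required. It suffices to check inversion on the universal point, i.e.\ over $T=H$, which is locally noetherian. Over an affine open $U\subset H$, consider the chain $Z_U\supseteq h(Z_U)\supseteq h^2(Z_U)\supseteq\cdots$ of closed subschemes of the noetherian scheme $Z_U$; it stabilises. Since $h^n$ is an isomorphism onto its image, the equality $h^n(h(Z_U))=h^n(Z_U)$ forces $h(Z_U)=Z_U$, so $h$ is an isomorphism. Alternatively, for arbitrary $T$, apply this argument on each fibre. Then $h$ is flat by the fibrewise flatness criterion, and a surjective flat closed immersion of finite presentation is an isomorphism. With this added, your argument is complete.
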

Note that the $F$-points of \textbf{Aut}$_F(X, Z)$ are exactly the automorphisms of $X$ over $F$ preserving $Z \to X$. When $(X,\Delta)$ is a pair, we will denote by \textbf{Aut}$_F(X, \Delta)$ the group scheme \textbf{Aut}$_F(X, \Supp \Delta)$, where $\Supp \Delta$ is endowed with its reduced structure. When the context is sufficiently clear, we will drop the index $F$ and simply write \textbf{Aut}$(X)$ and \textbf{Aut}$(X,\Delta)$.

\begin{defn}
    Let $S$ be a scheme and $X$ an $S$-scheme. The relative Picard functor $\cP ic_{X/S}$ is the contravariant functor defined by
    \[\cP ic_{X/S} : (\text{Sch}/S ) \to (\text{Ab}), ~~\cP ic_{X/S}(T) := \Pic(X_T)/p_T^*\Pic(T),\]
    where $p_T : X_T \to T$ is the projection. We denote its associated sheaf in the big étale topology by $\cP ic_{(X/S)(\etale)}$.
\end{defn}

\begin{thm}[Theorem 9.4.8 - \cite{fantechi2005fundamental}]
    Let $f : X \to S$ be a morphism of schemes. Assume that $f$ is projective Zariski locally over $S$, flat and has integral geometric fibres. Then, the sheaf $\cP ic_{(X/S)(\etale)}$ is representable by a separated group scheme $\bPic(X/S)$ locally of finite type over $S$.
\end{thm}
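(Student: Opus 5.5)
The plan is to follow Grothendieck's construction of the Picard scheme through the Hilbert scheme of relative effective divisors. Since $\cP ic_{(X/S)(\etale)}$ is an étale sheaf and representability by separated schemes descends along étale coverings, I will work étale locally on $S$ and glue. I may also assume $S$ is affine and noetherian, reducing to the noetherian case by a standard limit argument. Then $f$ is projective with a relatively very ample $\cO_X(1)$.

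\textbf{Step 1: a section and the rigidified functor.} The geometric fibres are integral, hence generically smooth. So the smooth locus of $f$ surjects onto $S$, and after an étale base change $f$ admits a section $\sigma$. Flatness and integrality of the geometric fibres give $f_*\cO_X = \cO_S$ universally. Hence the functor of line bundles rigidified along $\sigma$ is a Zariski sheaf, and it coincides with $\cP ic_{(X/S)(\etale)}$. It therefore suffices to represent this rigidified functor, one Hilbert polynomial $\varphi$ at a time. The union over $\varphi$ will then be locally of finite type.

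\textbf{Step 2: divisors and the Abel map.} I will show that relative effective Cartier divisors $\mathrm{Div}_{X/S}$ form an open subscheme of $\mathrm{Hilb}_{X/S}$. The Cartier condition is open because $f$ is flat and the fibres are integral, so a flat family of closed subschemes is a family of divisors on an open set. Given $\varphi$, Castelnuovo–Mumford regularity and boundedness give a single $n$ such that every line bundle $L$ with Hilbert polynomial $\varphi$ has $L(n)$ with vanishing higher direct images and globally generated fibres. Consider the open subscheme $U \subset \mathrm{Div}_{X/S}$ of divisors $D$ whose associated sheaf $\cO(D)$ has vanishing $R^if_*$ for $i>0$. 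The Abel map $U \to \mathrm{Pic}^{\varphi(\cdot+n)}$ is then representable by the projective bundle $\mathbb P(f_*\cO(D))$, which is smooth and surjective with étale-local sections. So $\mathrm{Pic}^{\varphi}$ is the quotient of $U$ by a smooth proper equivalence relation $U\times_{\mathrm{Pic}}U$. Using the étale-local sections, one obtains $\mathrm{Pic}^\varphi$ étale locally as a closed subscheme of $U$, and these pieces glue.

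\textbf{Step 3: separatedness.} I will use the valuative criterion over a DVR $R$. Suppose two line bundles agree on the generic fibre. Their difference is then $\cO(E)$ for a divisor $E$ supported on the special fibre. That fibre is integral, so $E$ is a multiple of the whole fibre, hence a pullback from $\Spec R$. Therefore the two bundles are equal in $\cP ic$.

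\textbf{Main obstacle.} I expect the hardest part to be Step 2, specifically the uniform boundedness needed to choose $n$ for all line bundles with fixed Hilbert polynomial. The quotient by the projective-bundle relation producing an honest scheme, not just an algebraic space, is the other delicate point. For both I would invoke Mumford's regularity bounds and then Grothendieck's effective descent for proper flat equivalence relations on quasi-projective schemes.
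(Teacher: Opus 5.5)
The paper gives no proof of this statement; it only cites Kleiman's account of Grothendieck's theorem. Your architecture is that same route: reduction to noetherian $S$, $\mathrm{Div}_{X/S}$ open in $\mathrm{Hilb}_{X/S}$, an Abel map whose fibres are projective bundles (a nonzero section on an integral fibre is a nonzerodivisor), a quotient by the resulting smooth proper equivalence relation, and separatedness from integrality of the fibres. However, two steps fail as written.

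The first is the single twist $n$ that is supposed to work for every $L$ with Hilbert polynomial $\varphi$. Mumford's regularity theorem bounds the regularity of ideal sheaves of subschemes of $\mathbb P^r$ with fixed Hilbert polynomial. It therefore controls effective divisors, not arbitrary invertible sheaves. A uniform $n$ is equivalent to boundedness of the whole family of such $L$, i.e.\ to $\mathrm{Pic}^{\varphi}$ being of finite type. That is a genuinely harder finiteness statement (the subject of Kleiman's SGA~6, Exp.~XIII) and does not follow from the tools you name. It is also not needed, since the statement only claims \emph{local} finite type. The way around it is as follows. Let $\mathrm{Pic}^{\varphi}_n\subset\mathrm{Pic}^{\varphi}$ be the subfunctor of those $L$ that are $n$-regular on every fibre. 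This subfunctor is open by semicontinuity, and $\mathrm{Pic}^{\varphi}_n\subset\mathrm{Pic}^{\varphi}_{n+1}$ by Mumford's lemma. The $\mathrm{Pic}^{\varphi}_n$ cover $\mathrm{Pic}^{\varphi}$; check this on field-valued points, where Serre vanishing applies. For $L$ in $\mathrm{Pic}^{\varphi}_n$, $L(n)$ is globally generated and $f_*L(n)$ is locally free of rank $\varphi(n)$, commuting with base change. So your Step 2 runs for each fixed $n$, with $U_n$ the preimage of $\mathrm{Pic}^{\varphi}_n$ under $D\mapsto\cO(D)(-n)$. Here $U_n$ is quasi-projective, because for fixed $n$ these $L$ are bounded via a Quot scheme of $\cO^{\oplus\varphi(n)}$. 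Then $\mathrm{Pic}^{\varphi}$ is the increasing union of the open subschemes $\mathrm{Pic}^{\varphi}_n$.

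The second is the descent. It is false that representability by a separated scheme descends along étale coverings. Étale descent data on schemes need not be effective, which is exactly how algebraic spaces that are not schemes arise. Likewise, ``étale locally a closed subscheme of $U$, and these pieces glue'' presupposes that $\mathrm{Pic}^{\varphi}$ is already a scheme. The quotient theorem you mention at the end is the right tool, and it makes the localization unnecessary:
\begin{itemize}
\item Describing the Abel map needs only $f_*\cO_X=\cO_S$ universally, not a section. On $U_n$-points $\cO(D)$ is an honest invertible sheaf, so $R_n:=U_n\times_{\mathrm{Pic}}U_n\to U_n$ is a projective bundle.
\item Hence $R_n\to U_n\times_S U_n$ is a proper monomorphism, i.e.\ a closed immersion, and $R_n$ is a smooth proper equivalence relation on the quasi-projective $S$-scheme $U_n$.
\item The quotient $U_n/R_n$ therefore exists over $S$ itself. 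The map $U_n\to U_n/R_n$ is smooth by fppf descent from $R_n\to U_n$, so it has étale-local sections.
\item Consequently $U_n/R_n$ represents the étale quotient sheaf, which is $\mathrm{Pic}^{\varphi}_n$.
\end{itemize}
If you do localize, you must separately show that the descent datum is effective, e.g.\ via a compatible relatively ample sheaf.

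Your Step 3 is fine. Integrality of $X_k$ makes the uniformizer locally prime in $X_R$, so a Cartier divisor supported on $X_k$ is a multiple of $X_k$. Separatedness may be checked after the étale base change that provides a section. Quasi-separatedness follows from writing $\mathrm{Pic}^{\varphi}$ as an increasing union of separated opens.
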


\begin{rmk}\label{rmk_incpic}
    The situation that will interest us most in what follows is when S is the spectrum of a field $F$. In this case, there is a natural inclusion $\Pic(X) \hookrightarrow \bPic(X/F)(F)$, which is in general not an isomorphism. The failure of surjectivity is measured by the Brauer group $\Brauer(F)$ of $F$ \cite[Corollary 2.5.9]{colliot2021brauer}. Nevertheless, when $F$ is algebraically closed, then this map is indeed an isomorphism.
\end{rmk}
Under the appropriate assumptions that guarantee the existence of both the Picard scheme and the automorphism scheme, the following result is a direct application of \cite[Lemma 9.5.1]{fantechi2005fundamental}.

\begin{prop}\label{prop_concomp}
    Let $F$ be a field, $X$ a projective variety over $F$ and $Z$ a closed subscheme of $X$. Then the connected components
    \begin{itemize}
        \item \textbf{Aut}$^{0}(X)$ of the identity element of the $F$-group scheme \textbf{Aut}$(X)$,
        \item \textbf{Aut}$^{0}(X,Z)$ of the identity element of the $F$-group scheme \textbf{Aut}$(X,Z)$,
        \item $\bPic^0(X/F)$ of the identity element of the $F$-group scheme $\bPic(X/F)$
    \end{itemize}
    are open and closed group subschemes of finite type over $F$. Moreover, they are geometrically irreducible and their formations commute with extending $F$. More precisely, if $F'/F$ is a field extension, then
    \[\bPic^0(X/F) \otimes_F F' \simeq \bPic^0(X_{F'}/F'), ~\textbf{Aut}^0(X,Z) \otimes_F F' \simeq \textbf{Aut}^0(X_{F'},Z_{F'}) ~\text{and}~ \textbf{Aut}^0(X) \otimes_F F' \simeq \textbf{Aut}^0(X_{F'}).\]
\end{prop}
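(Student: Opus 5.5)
The statement to prove is Proposition \ref{prop_concomp}. The plan is to derive all three items from a single general fact, \cite[Lemma 9.5.1]{fantechi2005fundamental}. That lemma says: for a group scheme $G$ locally of finite type over a field $F$, the identity component $G^0$ is an open and closed subgroup scheme, it is of finite type and geometrically irreducible, and its formation commutes with any field extension $F'/F$, i.e. $(G_{F'})^0 = G^0 \otimes_F F'$. So the work reduces to three checks:
\begin{itemize}
    \item[(a)] each of the three objects exists as a group scheme locally of finite type over $F$;
    \item[(b)] the group scheme obtained after base change is the group scheme attached to $X_{F'}$ (and $Z_{F'}$);
    \item[(c)] combining (a) and (b) with the lemma gives the stated isomorphisms.
\end{itemize}

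For (a), I would argue separately for each object.
\begin{itemize}
    \item Since $X$ is a projective variety, it is proper over $F$. Theorem \ref{thm_autexist} then gives that $\mathbf{Aut}(X)$ is locally of finite type, and that $\mathbf{Aut}(X,Z)$ is a closed subgroup scheme of it, hence also locally of finite type.
    \item For the Picard scheme I would apply Theorem 9.4.8 of \cite{fantechi2005fundamental} with $S=\Spec F$. The structure morphism is projective and flat (the base is a field). Its geometric fibre is $X_{\overline F}$, which is integral because a variety is geometrically integral by our conventions. Hence $\bPic(X/F)$ exists, is separated, and is locally of finite type.
\end{itemize}

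For (b), I would compare functors of points.
\begin{itemize}
    \item For automorphisms, for every $F'$-scheme $T$ we have $X_{F'}\times_{F'}T = X_T$. Hence $\mathfrak{A}ut_{X_{F'}/F'}$ is the restriction of $\mathfrak{A}ut_{X/F}$ to $(\mathrm{Sch}/F')$, and so $\mathbf{Aut}_{F'}(X_{F'}) \simeq \mathbf{Aut}_F(X)\otimes_F F'$. The same identity $Z_{F'}\times_{F'}T=Z_T$ shows that the factorization condition defining $\mathfrak{A}ut_{X/F,Z}$ is unchanged, which gives the statement for $\mathbf{Aut}(X,Z)$.
    \item For the Picard scheme, the functor $\cP ic_{X_{F'}/F'}$ is likewise the restriction of $\cP ic_{X/F}$. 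Sheafification in the big étale site commutes with this restriction, because étale covers of an $F'$-scheme are the same whether it is regarded over $F$ or over $F'$. Therefore $\bPic(X_{F'}/F') \simeq \bPic(X/F)\otimes_F F'$.
\end{itemize}

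For (c), I would apply the lemma to each of the three group schemes. This gives the open, closed, finite type and geometric irreducibility claims directly. Passing to identity components in the isomorphisms from (b) then yields the three displayed isomorphisms.

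The step most likely to need care is the compatibility of $\bPic$ with base change. This is where one has to use the étale sheafification rather than the naive functor $\cP ic_{X/S}$, because the naive functor does not commute with base change; compare Remark \ref{rmk_incpic}. The other potentially delicate point is checking geometric integrality of the fibre, but this is built into our definition of variety. Everything else is formal once representability is known.
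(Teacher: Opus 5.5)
Your proposal is correct and is essentially the paper's own argument: the paper simply states that the proposition is a direct application of \cite[Lemma 9.5.1]{fantechi2005fundamental}, once the group schemes are known to exist by Theorem \ref{thm_autexist} and \cite[Theorem 9.4.8]{fantechi2005fundamental}. You additionally check, via functors of points and compatibility of \'etale sheafification with restriction, that the representing group schemes commute with base change, which is a step the paper leaves implicit.
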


\begin{defn}
    Using the same notation and assumptions as above, we define
    \[\operatorname{Aut}^0(X,\Delta) := \textbf{Aut}^0(X,Z)(F).\]
    If $\iota : \Pic(X) \hookrightarrow \bPic(X/F)(F)$ is the natural inclusion of Remark \ref{rmk_incpic}, we also define
    \[\Pic^0(X) := \iota^{-1}\left(\bPic^0(X/F)(F)\right).\]
\end{defn}
A line bundle $L$ lies in $\Pic^0(X)$ if and only if it is algebraically equivalent to $\cO_X$, see for example \cite[Proposition 9.5.10]{fantechi2005fundamental}. When the context is clear enough, we will write more simply $\bPic(X)$ (respectively, $\bPic^0(X)$) to refer to $\bPic(X/F)$ (respectively, $\bPic^0(X/F)$).

\begin{defn}
    Let $f : X \to S$ be a fibration between complex quasi-projective varieties. If $\eta \in S$ denotes the generic point of $S$, we denote by $\overline{\eta} \in S$ the image of the composition $\Spec \overline{\C(S)} \to \Spec \C(S) \to S$, where $\C(S)$ is the function field of $S$. We call $\overline{\eta}$ the \emph{geometric generic point} of $S$, and $X_{\overline{\eta}}$ the \emph{geometric generic fibre}.
\end{defn}
We now state the results on $K$-trivial fibrations that will be needed later. The following theorem was originally stated for $\Q$-divisors in \cite{xu2020homogeneous}, but Li observed in \cite[Remark 4.2]{li2023relative} that the statement holds more generally for $\R$-divisors.

\begin{thm}[Theorem 4.5 - \cite{xu2020homogeneous}]\label{thm_xu}
    Let $(X,\Delta)$ be a projective log pair with klt singularities over an algebraically closed field $F$ of characteristic zero. Assume that $K_X + \Delta \sim_{\R} 0$. Then $\textbf{Aut}^0(X,\Delta)$ is an abelian variety of dimension $h^1(X,\cO_X)$.
\end{thm}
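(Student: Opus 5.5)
The plan is to follow the structure of Xu's argument: first show that $\textbf{Aut}^0(X,\Delta)$ has no nontrivial linear part, and then compute its dimension through the Albanese map.

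By Proposition \ref{prop_concomp}, $G := \textbf{Aut}^0(X,\Delta)$ is a connected group scheme of finite type over $F$. In characteristic zero it is therefore smooth, i.e. a connected algebraic group. By Chevalley's structure theorem there is an exact sequence $1 \to L \to G \to B \to 1$, with $L$ connected linear and $B$ an abelian variety. It therefore suffices to show:
\begin{itemize}
\item[(a)] $L$ is trivial;
\item[(b)] $\dim G = h^1(X,\cO_X)$.
\end{itemize}

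For (a), argue by contradiction. If $L \neq 1$, then $L$ contains a one-dimensional subgroup $H \simeq \mathbb{G}_a$ or $\mathbb{G}_m$, acting faithfully on $X$ and preserving $\Supp \Delta$.
\begin{enumerate}
\item Take an $H$-equivariant log resolution $\pi : X' \to X$, and write $K_{X'} + \Delta' = \pi^*(K_X+\Delta) \sim_\R 0$. By klt, every coefficient of $\Delta'$ is $<1$. Since the action preserves $\Delta$, the divisor $\Delta'$ is $H$-invariant.
\item By Rosenlicht's theorem, after replacing $X'$ by a further equivariant birational model there is a rational quotient $X' \dashrightarrow Y$. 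Its general fibre $C$ is the closure of a general $H$-orbit, hence $C \simeq \mathbb{P}^1$, and it meets the complement of the orbit in one point (for $\mathbb{G}_a$) or two points (for $\mathbb{G}_m$).
\item An $H$-invariant prime divisor dominating $Y$ can only meet a general orbit closure at these boundary points. Vertical components of $\Delta'$ miss a general $C$. Since $C$ is general, it meets the horizontal invariant components transversally at the boundary points. Hence
\[(K_{X'}+\Delta')\cdot C \le -2 + \sum_{\text{boundary points}} \operatorname{coeff} < -2 + 2 = 0.\]
\item This contradicts $K_{X'}+\Delta' \sim_\R 0$.
\end{enumerate}
The delicate points are choosing a model on which the quotient map is a morphism near general orbits, and checking that $\Delta'$ meets $C$ only in the boundary points with multiplicity one. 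I expect this step to be the main obstacle. I would handle it by working over a dense open subset of $Y$, where $X'$ is an $H$-equivariant $\mathbb{P}^1$-bundle compactification of a trivial $H$-torsor.

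For (b), there are two inequalities.
\begin{itemize}
\item \emph{Upper bound.} Once $G = B$ is an abelian variety, fix a point $x$ and consider the orbit map $G \to X$. Composing with the Albanese map $X \to \operatorname{Alb}(X)$ gives a homomorphism $G \to \operatorname{Alb}(X)$ up to translation. Its kernel is finite: otherwise a positive-dimensional abelian subvariety would act trivially on $\operatorname{Alb}(X)$ while moving points in the fibres of the Albanese map. Those fibres are again klt Calabi--Yau with $q=0$ (using Kawamata's description below), and their automorphism groups have trivial identity component. This gives $\dim G \le \dim \operatorname{Alb}(X) = h^1(X,\cO_X)$, where the last equality holds because klt singularities are rational.
\item \emph{Lower bound.} Use Kawamata's theorem that for a klt pair with $K_X+\Delta \sim_\R 0$ the Albanese map is an étale-trivial fibration. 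Thus $X \simeq (F_0 \times \widetilde{A})/\Gamma$ with $\Gamma$ a finite group acting diagonally and by translations on $\widetilde{A}$, and $\Delta$ is the image of a divisor pulled back from $F_0$. Translations of $\widetilde{A}$ commute with $\Gamma$ and preserve $\Delta$, so they descend to a $q$-dimensional abelian subvariety of $\textbf{Aut}^0(X,\Delta)$. Hence $\dim G \ge q = h^1(X,\cO_X)$.
\end{itemize}
Combining (a) with the two inequalities in (b) shows that $\textbf{Aut}^0(X,\Delta)$ is an abelian variety of dimension $h^1(X,\cO_X)$.
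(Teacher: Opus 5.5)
The paper gives no proof of this statement (it is quoted from Xu, with Li's extension to $\R$-boundaries), so your argument has to stand on its own. It has a genuine gap in the upper bound of (b). You claim that the Albanese fibres are klt Calabi--Yau with $q=0$ and that their $\textbf{Aut}^0$ is trivial.

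The first claim is false. Take a bielliptic surface $S=(E\times F)/G$, with $G$ acting on $E$ by translations and $F/G\simeq\mathbb{P}^1$. Its Albanese map is $S\to E/G$, and the fibres are copies of the elliptic curve $F$, which has $q=1$ and $\textbf{Aut}^0(F)\simeq F$. In a decomposition $X\simeq(F_0\times\widetilde A)/\Gamma$ with $\Gamma$ acting on $\widetilde A$ by translations, what vanishes is $H^1(F_0,\cO_{F_0})^{\Gamma}$, not $H^1(F_0,\cO_{F_0})$. The second claim, even when $q(F_0)=0$, is exactly the $q=0$ case of the theorem you are proving, so the fibrewise reduction is circular.

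The finiteness of the kernel has to come from a global argument. One option is the Nishi--Matsumura theorem: a faithful action of a connected algebraic group on a normal projective variety induces a homomorphism to $\operatorname{Alb}(X)$ with affine kernel, which is finite when the group is an abelian variety. Alternatively, argue directly. Let $L$ be very ample and let $K$ be the identity component of the kernel of $g\mapsto[g^*L\otimes L^{-1}]\in\bPic^0(X)$. Then $K$ preserves the class of $L$, so its theta group acts linearly on $H^0(X,L)$. Hence $K$ acts on $X\subset\mathbb{P}(H^0(X,L)^*)$ through a homomorphism $K\to\mathrm{PGL}(H^0(X,L))$, which is trivial because $K$ is an abelian variety and $\mathrm{PGL}$ is affine. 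Faithfulness gives $K=1$, hence $\dim G\le\dim\bPic^0(X)=h^1(X,\cO_X)$, with no fibres involved.

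The obstacle you flag in (a) is also real, not cosmetic. On an equivariant log resolution, vertical components of $\Delta'$ can meet every general orbit closure, several of them at the same boundary point, and tangentially. For $\mathbb{G}_m$ acting on $\mathbb{P}^2$ with weights $(0,1,1)$, every orbit closure is a line through $p=[1:0:0]$. So any two invariant lines through $p$ (vertical, and snc together with $\{x_0=0\}$) meet every general $C$ at $p$. With weights $(0,1,2)$, the general orbit closures are conics all tangent at $p$ to the invariant line $\{x_2=0\}$. The bound with one coefficient $<1$ per boundary point is therefore unavailable there.

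You must pass to the model you sketch: $X''$ which over a dense open $Y^0\subset Y$ is $Y^0\times\mathbb{P}^1$, with $H$ acting on the second factor. This exists because the generic quotient is an $H$-torsor, Zariski-locally trivial since $\mathbb{G}_a$ and $\mathbb{G}_m$ are special. State explicitly that $\Delta''$ is the crepant transform of $\Delta$ (pull back to a common resolution and push forward). Then:
\begin{itemize}
\item $K_{X''}+\Delta''\sim_\R 0$;
\item each coefficient of $\Delta''$ equals $-a(E,X,\Delta)<1$, because klt bounds the discrepancy of every divisor over $X$, so no log resolution is needed;
\item the components meeting a general fibre $C$ are $H$-invariant and horizontal, hence equal to $Y^0\times\{0\}$ or $Y^0\times\{\infty\}$.
\end{itemize}
This gives $0=(K_{X''}+\Delta'')\cdot C=-2+d_0+d_\infty<0$ for $\mathbb{G}_m$, respectively $0=-2+d_\infty<0$ for $\mathbb{G}_a$.

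For the lower bound, make sure you use the version for pairs of the étale triviality of the Albanese map, in which the trivialisation also identifies $\Delta$ with a divisor pulled back from $F_0$. Kawamata's original statement for $K_X\equiv 0$ does not give this.
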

The following lemma is \cite[Lemma 4.4]{li2023relative} reformulated differently with a first part that does not assume that the field over which the varieties are defined is algebraically closed. Recall that if $G$ and $H$ are group schemes, a morphism $G \to H$ is a \emph{homomorphism} if it induces a group homomorphism $G(S) \to H(S)$ for any scheme $S$, where $G(S)$ (respectively, $H(S)$) is the group of $S$-valued points of $G$ (respectively, of $H$).

\begin{lem}\label{lem_li44}
    Let $(X,\Delta)$ be a projective log pair over a field $F$ of characteristic $0$. Let $D$ be a Cartier divisor on $X$ and $G \subset \textbf{Aut}^0(X)$ a closed subgroup scheme. If $G$ is of finite type over $F$ and satisfies $H^0(G,\cO_G) = F$, then the following map
    \[\vartheta_D : G \to \bPic^0(X), ~[g] \mapsto [g^*D - D]\]
    exists and is a homomorphism of algebraic groups. If we assume furthermore that $F$ is algebraically closed, that $K_X + \Delta \sim_{\mathbb{R}} 0$, and that $D$ is big and nef, then the kernel of $\vartheta_D$ is finite.
\end{lem}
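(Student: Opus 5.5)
The first part is formal, and I would prove it by constructing $\vartheta_D$ through the Picard functor. Let $\mu : G \times X \to X$ be the restriction of the universal action, regarded as a $G$-automorphism of $X_G = G \times X$. Then $\mathcal{L} := \mu^*\cO_X(D) \otimes \mathrm{pr}_X^*\cO_X(-D)$ is a line bundle on $X_G$. Its class in $\cP ic_{X/F}(G)$ gives a morphism $G \to \bPic(X/F)$ by the universal property of the Picard scheme. On $F$-points (and on $T$-points) this morphism sends $[g]$ to $[g^*D - D]$.

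The identity $e$ maps to $[\cO_X]$. Since $G \subset \textbf{Aut}^0(X)$ is connected, the image of $G$ lands in $\bPic^0(X)$; here I use that $G$ is connected because it is of finite type with $H^0(G,\cO_G)=F$. Connectedness of the relevant parts is what guarantees the morphism lands in the identity component.

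For the homomorphism property, the identity $(gh)^*D - D = h^*(g^*D - D) + (h^*D - D)$ gives a cocycle, not a homomorphism. So I need the fact that $\textbf{Aut}^0(X)$ acts trivially on $\bPic^0(X)$, i.e. $h^*M \cong M$ for $M \in \Pic^0$. Equivalently, the map $G \times G \to \bPic^0$, $(g,h)\mapsto \vartheta(gh)-\vartheta(g)-\vartheta(h)$, vanishes. I would obtain this by a rigidity argument. The map $G \times G \to \bPic^0(X)$ restricts to $0$ on $G\times\{e\}$ and on $\{e\}\times G$. Since $H^0(G,\cO_G)=F$, $G$ is proper-like enough for the rigidity lemma to apply, giving that the map is constant, hence zero. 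This is the standard argument that a morphism from such a group to a group scheme sending $e \mapsto e$ is a homomorphism up to translation. Alternatively, I would cite \cite[Lemma 4.4]{li2023relative} directly, after checking that its proof never uses algebraic closedness for this part.

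For the second part, with $F$ algebraically closed and $K_X+\Delta\sim_\R 0$, I would first argue that $G$ is an abelian variety, or at least that $G^0_{\mathrm{red}}$ is. This holds because $\textbf{Aut}^0(X)$ (and $\textbf{Aut}^0(X,\Delta)$) is an abelian variety by Theorem \ref{thm_xu}, and a closed subgroup of an abelian variety has an abelian variety as its reduced identity component. It then suffices to show that the kernel $K=\ker\vartheta_D$ has finite identity component. Suppose instead that $K^0_{\mathrm{red}}=B$ is a positive-dimensional abelian subvariety with $b^*D\sim D$ for all $b\in B$.

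The key step, and the main obstacle, is to derive a contradiction from $D$ big and nef. Since $D$ is nef and big and $K_X+\Delta\sim_\R 0$, the basepoint-free theorem makes $mD$ semiample, giving the Iitaka/ample model $\phi: X\to Z$ with $D=\phi^*A$, $A$ ample, and $\phi$ birational. Since $B$ preserves the class of $D$, the action of $B$ linearizes on $H^0(X,mD)$ after passing to a finite cover or using the theta group. Hence $B$ acts on $Z\subset \mathbb{P}(H^0(mD))$ via a connected group, descending to $Z$ compatibly with $\phi$. A connected group action preserving the polarization $A$ on $Z$ lies in a linear algebraic group, namely $\mathrm{Aut}(Z,A)$, acting through $\mathrm{PGL}$. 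Because $B$ is an abelian variety, its image there is trivial. Therefore $B$ acts trivially on $Z$, hence trivially on $X$ since $\phi$ is birational and the action is faithful. This contradicts $\dim B>0$, so $K$ is finite.

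The delicate points are making the action descend to $Z$ and using properness of $B$ to kill its image in a linear group. I would handle the first by noting that $B$ maps $\phi$-fibres to $\phi$-fibres: connected groups act trivially on $N^1$, so $B$ preserves the face of contracted curves, and Blanchard's lemma then gives the descent.
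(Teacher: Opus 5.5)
Your construction of $\vartheta_D$ through the Picard functor, and the connectedness argument for landing in $\bPic^0(X)$, agree with the paper. The finite-kernel part, however, has a genuine gap.

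\emph{Finite kernel.} Your contradiction needs $(\ker\vartheta_D)^0$ to be an abelian variety. You derive this from the claim that $\textbf{Aut}^0(X)$ is an abelian variety by Theorem \ref{thm_xu}. That theorem concerns $\textbf{Aut}^0(X,\Delta)$ (for klt pairs), not $\textbf{Aut}^0(X)$. For four general lines $L_i\subset\mathbb{P}^2$, the pair $(\mathbb{P}^2,\tfrac34\sum_i L_i)$ is klt with $K_{\mathbb{P}^2}+\tfrac34\sum_i L_i\sim_\Q 0$, yet $\textbf{Aut}^0(\mathbb{P}^2)=\mathrm{PGL}_3$. Your $G$ is only an anti-affine closed subgroup of $\textbf{Aut}^0(X)$, and anti-affine groups need not be proper. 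An example is the $\mathbb{G}_m$-torsor of a non-torsion $M\in\Pic^0(E)$ over an elliptic curve $E$, which acts faithfully on $\mathbb{P}_E(\cO_E\oplus M)$. Without properness, nothing in your argument excludes a positive-dimensional affine kernel, and its injection into $\mathrm{PGL}(H^0(X,mD))$ would be no contradiction.

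The missing step is to show $G\subset\textbf{Aut}^0(X,\Delta)$. An anti-affine group has only trivial finite-dimensional representations, so $G$ fixes every section of $\cO_X(-mK_X)$ for its canonical linearization. Write $\Delta$ as a positive combination of effective $\Q$-divisors $\Delta_k$ with $\Supp\Delta_k=\Supp\Delta$ and $K_X+\Delta_k\sim_\Q 0$. Each $m\Delta_k$ lies in $|{-mK_X}|$ and is therefore $G$-invariant, so $G$ preserves $\Supp\Delta$ and Theorem \ref{thm_xu} applies. After that, your semiample-model and theta-group argument is the standard one the paper imports from Li. Both Theorem \ref{thm_xu} and the basepoint-free theorem also need $(X,\Delta)$ klt, which the statement leaves implicit.

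\emph{Homomorphism property.} This step is not justified as written either. The rigidity lemma needs one factor to be proper, since its proof uses that $G\times T\to T$ is closed. The condition $H^0(G,\cO_G)=F$ does not give this, as the torsor example shows, and that example is admissible in the first part of the lemma. The paper instead uses \cite[Proposition 3.3.4]{brion2017structure}, which is exactly where anti-affineness is exploited.

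Your route can be repaired as follows. Over $\overline F$, $\bPic^0(X)$ is a smooth connected commutative algebraic group, an extension of an abelian variety $A$ by an affine group $L$. By Rosenlicht's theorem the composite $G_{\overline F}\to A$ is a homomorphism. Hence $\psi(g,h)=\vartheta(gh)-\vartheta(g)-\vartheta(h)$ factors through $L$. Since $H^0(G_{\overline F}\times G_{\overline F},\cO)=\overline F$, $\psi$ is constant, hence zero.

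Your intermediate claim that $\textbf{Aut}^0(X)$ acts trivially on $\bPic^0(X)$ is false in general. On a cuspidal cubic, $\mathbb{G}_m\subset\textbf{Aut}^0$ acts on $\bPic^0\cong\mathbb{G}_a$ by a nontrivial character. So only the statement for the anti-affine $G$ is available.
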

\begin{proof}
    Denote by $\alpha : G \times X \to X$ the natural group-scheme action of $G$ on $X$ and let $q : G \times X \to X$ be the projection. The line bundle $L := \alpha^*\cO_X(D) \otimes q^*\cO_X(-D)$ defines an element of $\bPic(X)(G)$ which corresponds, by representability of the Picard functor, to the morphism of $F$-schemes
    \[\tilde{\vartheta}_D : G \to \bPic(X), ~[g] \mapsto [g^*D - D].\]
    By \cite[Proposition 3.3.4]{brion2017structure}, this morphism factors through $\bPic^0(X)$ and the resulting morphism
    \[\vartheta : G \to \bPic^0(X)\]
    is a homomorphism of algebraic groups. If $F$ is algebraically closed, $K_X + \Delta \sim_{\R} 0$ and $D$ is big and nef, then the rest of the proof is identical to the second part of the proof of \cite[Lemma 4.4]{li2023relative}.
\end{proof}

\begin{cor}\label{cor_phiD}
   Let $(X,\Delta)$ be a projective log pair over a field $F$ of characteristic $0$ satisfying $K_X + \Delta \sim_\R 0$. Let $D$ be a Cartier divisor on $X$. Then the following map
   \[\vartheta_D : \textbf{Aut}^0(X,\Delta) \to \bPic^0(X), ~[g] \mapsto [g^*D - D]\]
   exists and is a homomorphism of algebraic groups.
\end{cor}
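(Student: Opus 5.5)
The plan is to obtain the corollary as a direct application of Lemma \ref{lem_li44}, taking $G = \textbf{Aut}^0(X,\Delta)$. So we must check that $G$ satisfies the three hypotheses of the first part of that lemma. These are: $G$ is a closed subgroup scheme of $\textbf{Aut}^0(X)$; $G$ is of finite type over $F$; and $H^0(G,\cO_G) = F$. The hypothesis $K_X+\Delta\sim_\R 0$ is used only for the last of these.

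\textbf{Closed subgroup and finite type.} By Theorem \ref{thm_autexist}, $\textbf{Aut}(X,\Delta)$ is a closed subgroup scheme of $\textbf{Aut}(X)$. Hence its identity component $\textbf{Aut}^0(X,\Delta)$ is connected and contains the identity, so it lies in $\textbf{Aut}^0(X)$. It is closed there, because by Proposition \ref{prop_concomp} it is open and closed in $\textbf{Aut}(X,\Delta)$, which is closed in $\textbf{Aut}(X)$. The same proposition also shows that it is of finite type over $F$.

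\textbf{Global functions.} The main step is to show $H^0(G,\cO_G)=F$, and we reduce it to the algebraically closed case.

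\begin{enumerate}
\item Let $\overline F$ be an algebraic closure of $F$. Flat base change gives $H^0(G,\cO_G)\otimes_F\overline F \simeq H^0(G_{\overline F},\cO_{G_{\overline F}})$.
\item By Proposition \ref{prop_concomp}, $G_{\overline F}\simeq \textbf{Aut}^0(X_{\overline F},\Delta_{\overline F})$.
\item For Theorem \ref{thm_xu} to apply, $(X_{\overline F},\Delta_{\overline F})$ should be a klt pair with $K+\Delta\sim_\R 0$. The linear triviality is preserved by base change. The klt condition is not stated explicitly in the corollary. I would read it as implicit in "log pair" in the intended context, since Lemma \ref{lem_li44}'s second part and Theorem \ref{thm_xu} both require it, and klt is preserved under base change to $\overline F$ in characteristic $0$.
\item Theorem \ref{thm_xu} then says $G_{\overline F}$ is an abelian variety. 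A proper, geometrically integral variety has only constant global functions, so $H^0(G_{\overline F},\cO)=\overline F$.
\item Comparing dimensions over $F$ in step 1 gives $\dim_F H^0(G,\cO_G)=1$, so $H^0(G,\cO_G)=F$.
\end{enumerate}

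\textbf{Conclusion and main obstacle.} With these three hypotheses verified, the first part of Lemma \ref{lem_li44} produces the map $\vartheta_D$ and shows it is a homomorphism of algebraic groups. The step I expect to be the real obstacle is the global-functions condition: it is where descent along $\overline F/F$ and Xu's theorem enter, together with the implicit klt assumption. If klt is genuinely not assumed, I would instead try to show directly that $G_{\overline F}$ is proper and connected, hence anti-affine. The other verifications are formal.
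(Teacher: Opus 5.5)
Your argument is correct. It uses the same ingredients as the paper (Proposition \ref{prop_concomp}, Theorem \ref{thm_xu} over $\overline F$, and Lemma \ref{lem_li44}), but you descend along $\overline F/F$ at a different point.

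\emph{The paper's route.} It first builds $\widetilde{\vartheta}_D:\textbf{Aut}^0(X,\Delta)\to\bPic(X)$ over $F$. It then gets the factorization through $\bPic^0(X)$ from connectedness alone. Finally it checks the homomorphism property after base change to $\overline F$. There $\vartheta_{D,\overline F}$ is identified with the map of Lemma \ref{lem_li44} for the abelian variety $\textbf{Aut}^0(X_{\overline F},\Delta_{\overline F})$. This requires two further facts: that $\vartheta_D$ is compatible with base change, and the standard fact that being a homomorphism can be checked after the faithfully flat extension $\Spec\overline F\to\Spec F$.

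\emph{Your route.} You descend the hypothesis instead. Flat base change converts $H^0(G_{\overline F},\cO_{G_{\overline F}})=\overline F$ into $H^0(G,\cO_G)=F$. Then the first part of Lemma \ref{lem_li44}, which the paper deliberately states without assuming $F$ algebraically closed, gives the factorization and the homomorphism property at once over $F$. This avoids the base-change identification. The price is checking that $G$ is a closed subgroup scheme of finite type of $\textbf{Aut}^0(X)$, which you do correctly.

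\emph{The klt assumption.} You are right to flag it: the paper's proof also invokes Theorem \ref{thm_xu}, so it tacitly uses the same assumption. Your proposed fallback would not work without it, however. For non-klt pairs $\textbf{Aut}^0(X,\Delta)$ need not be proper: for $(\mathbb{P}^1,[0]+[\infty])$ it is $\mathbb{G}_m$. So in both arguments it is the klt hypothesis, not a general properness statement, that supplies $H^0(G,\cO_G)=F$.
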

\begin{proof}
    As in the proof of Lemma \ref{lem_li44}, we always have a morphism of $F$-schemes
    \[\widetilde{\vartheta}_D : \textbf{Aut}^0(X,\Delta) \to \bPic(X).\]
    Since $\textbf{Aut}^0(X,\Delta)$ is connected by Proposition \ref{prop_concomp}, this morphism factors through $\bPic^0(X)$. Let 
    \[\vartheta_D : \textbf{Aut}^0(X,\Delta) \to \bPic^0(X)\]
    be the resulting morphism. To verify that it is a homomorphism of algebraic groups, it suffices to check it by passing to the algebraic closure $\overline{F}$ of $F$. By Proposition \ref{prop_concomp}, we have
    \[\bPic^0(X) \otimes_F \overline{F} \simeq \bPic^0(X_{\overline{F}}) ~\text{and}~ \textbf{Aut}^0(X,\Delta) \otimes_F \overline{F} \simeq \textbf{Aut}^0(X_{\overline{F}},\Delta_{\overline{F}}).\]
    Moreover, $\textbf{Aut}^0(X_{\overline{F}},\Delta_{\overline{F}})$ is an abelian variety by Theorem \ref{thm_xu} and we directly check that the base change $\vartheta_{D, \overline{F}}$ of $\vartheta_D$ to $\overline{F}$ is equal to the homomorphism of algebraic groups $\vartheta_{D_{\overline{F}}}$ from Lemma \ref{lem_li44}, and this concludes the proof.
\end{proof}

\begin{lem}[Lemma 4.8 - \cite{li2023relative}]\label{lem_li48}
    Let $\phi : X \dashrightarrow Y$ be a birational map between $\Q$-factorial projective varieties and $\Delta$ a divisor on $X$. If $\phi$ is an isomorphism in codimension one, then there exists an isomorphism of groups
    \[\Aut^0(X_\eta,\Delta_\eta) \to \Aut^0(Y_\eta,\Delta_{Y,\eta}), ~g \mapsto g_Y := \phi \circ g \circ \phi^{-1}.\]
\end{lem}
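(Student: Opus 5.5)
The plan is to prove the statement at the level of group schemes rather than only on $\C(S)$-points. This way the conjugated maps land in the identity component automatically and the non-closed base field causes no trouble.

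Set $G := \textbf{Aut}^0(X_\eta,\Delta_\eta)$. It is of finite type and connected by Proposition \ref{prop_concomp}, hence geometrically irreducible. Let $\alpha : G \times X_\eta \to X_\eta$ be the action, and consider the $G$-birational map
\[\Phi := (\id_G \times \phi) \circ (\id_G, \alpha) \circ (\id_G \times \phi^{-1}) : G \times Y_\eta \dashrightarrow G \times Y_\eta ,\]
where $(\id_G,\alpha)$ is the $G$-automorphism $(g,x) \mapsto (g,gx)$ of $G\times X_\eta$. Since $\phi$ is an isomorphism in codimension one and $\alpha$ is an automorphism over $G$, the map $\Phi$ is a pseudoautomorphism of $G \times Y_\eta$ over $G$. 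It carries $G \times \Supp \Delta_{Y,\eta}$ to itself, because $\phi_*\Delta_\eta = \Delta_{Y,\eta}$ and $\alpha$ preserves $\Supp \Delta_\eta$.

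The key step is to show that $\Phi$ is a morphism, so that it defines a morphism $G \to \textbf{Aut}(Y_\eta,\Delta_{Y,\eta})$. Fix an ample Cartier divisor $H$ on $Y_\eta$ (a multiple of $H$ is Cartier by $\Q$-factoriality), and let $D := \phi^{-1}_* H$ on $X_\eta$. By Lemma \ref{lem_li44} or Corollary \ref{cor_phiD}, applied to $G$ and $D$ (after passing to a Cartier multiple), the class $[g^*D - D]$ lies in $\bPic^0(X_\eta)$. Hence the divisor $\alpha^*D - q^*D$ on $G \times X_\eta$ restricts on every (geometric) fibre over $G$ to a divisor algebraically equivalent to zero. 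Since $\phi$ is an isomorphism in codimension one, divisors and their numerical classes transfer. It follows that $\Phi^* (q^*H)$ is numerically equivalent over $G$ to $q^*H$, fibrewise on each geometric fibre $\{g\}\times Y_{\bar\eta}$, so it is ample over $G$. I would then apply the standard fact: a pseudoautomorphism over a base that pulls back a relatively ample divisor to a relatively ample divisor is an isomorphism. Indeed, both sides are the relative $\operatorname{Proj}$ of the same section algebra, since sections are unchanged by isomorphisms in codimension one between normal varieties. Therefore $\Phi$ is an automorphism over $G$ preserving the boundary, and we obtain a morphism $G \to \textbf{Aut}(Y_\eta,\Delta_{Y,\eta})$. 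It sends $e$ to $\id$ and $G$ is connected, so it factors through $\textbf{Aut}^0(Y_\eta,\Delta_{Y,\eta})$. On $\C(S)$-points it is $g \mapsto \phi\circ g\circ\phi^{-1}$, which is clearly a group homomorphism.

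Finally, running the same argument with $\phi^{-1}$ gives a map in the other direction. The two maps are mutually inverse by construction, so the map is an isomorphism of groups.

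I expect the main obstacle to be the ampleness transfer in the family over $G$ over the non-closed field $\C(S)$. The fibrewise criterion requires care because $G$ need not have many rational points and Remark \ref{rmk_incpic} warns that $\Pic \neq \bPic(\C(S))$. If this becomes delicate, I would first base change to $\overline{\C(S)}$, using that the formation of $\textbf{Aut}^0$ commutes with field extension by Proposition \ref{prop_concomp}, prove the statement there via Theorem \ref{thm_xu} and Lemma \ref{lem_li44}, and then descend. Descent works because $\Phi$ is defined over $\C(S)$, and being a morphism can be checked after faithfully flat base change.
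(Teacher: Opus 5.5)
The paper does not reprove this lemma (it is quoted from \cite{li2023relative}), so your argument has to stand on its own. Its overall structure is right. For a single $g\in\Aut^0(X_\eta,\Delta_\eta)$ the pointwise argument works: $X_\eta$ and $Y_\eta$ are $\Q$-factorial, so the negativity lemma turns $g^*D\equiv D$ into $g_Y^*H\equiv H$, and $g_Y$ is an automorphism. To see that $g_Y$ lies in the identity component you do need a family over $G=\textbf{Aut}^0(X_\eta,\Delta_\eta)$, since $G(\C(S))$ need not be dense in $G$.

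The gap is in the step that carries all the weight: ``since $\phi$ is an isomorphism in codimension one, divisors and their numerical classes transfer''. This transfer is the negativity lemma on a common resolution, and it needs both divisors to be $\Q$-Cartier. You apply it to $\id_G\times\phi : G\times X_\eta\dashrightarrow G\times Y_\eta$ and to the geometric fibres $X_{\overline{\eta}}\dashrightarrow Y_{\overline{\eta}}$. The hypothesis only makes $X_\eta$ and $Y_\eta$ $\Q$-factorial over $\C(S)$. $\Q$-factoriality is not inherited by products, and it is not preserved under extension of the base field: a Weil divisor whose $\Gal(\overline{\C(S)}/\C(S))$-orbit sum is $\Q$-Cartier need not be $\Q$-Cartier itself.

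So nothing you wrote shows that $\Phi^*(q^*H)=(\id_G\times\phi)_*(\alpha^*D)$ is $\Q$-Cartier. Without that, ``numerically equivalent to $q^*H$ over $G$'' and ``ample over $G$'' have no meaning. The relative $\operatorname{Proj}$ of the reflexive algebra $\bigoplus_m\pi_*\cO(m\Phi^*q^*H)$ just gives back $G\times Y_\eta$ through $\Phi$ and says nothing about whether $\Phi$ is regular. In fact $\Phi$ is a morphism if and only if $\Phi^*(q^*H)$ is $\Q$-Cartier and ample over $G$, so this is the whole content of the key step.

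You also cannot read it off from the fibre over $e$, where $\Phi$ restricts to the identity: a priori the codimension-two indeterminacy locus of $\Phi$ can lie inside that fibre. Your fallback of base changing to $\overline{\C(S)}$ makes things worse, since that is exactly where $\Q$-factoriality can be lost.

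What is missing is an argument that $(\id_G\times\phi)_*(\alpha^*D-q^*D)$ is $\Q$-Cartier on $G\times Y_\eta$. In the klt $K$-trivial setting where the lemma is used, one route goes through rational singularities. Take a common resolution $p:V\to X_\eta$, $r:V\to Y_\eta$. Pullback identifies $\bPic^0(X_\eta)$, $\bPic^0(V)$ and $\bPic^0(Y_\eta)$: the maps are injective, the groups have the same dimension $h^1(\cO)$, and we are in characteristic zero. Hence the strict transform under $\phi$ of an algebraically trivial Cartier divisor is again Cartier and algebraically trivial. You would still have to do this in the family, that is, show that $(\id_G\times p)^*(\alpha^*D-q^*D)$, which is trivial on the geometric fibres of $\id_G\times r$, descends (up to a multiple) to a line bundle on $G\times Y_\eta$. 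Alternatively, you could try to prove directly that $G\times Y_\eta$ is $\Q$-factorial.

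Once the $\Q$-Cartier property is established, the rest of your proof goes through. That means relative ampleness checked on fibres, regularity of $\Phi$, factorisation through $\textbf{Aut}^0(Y_\eta,\Delta_{Y,\eta})$ by connectedness, and the inverse built from $\phi^{-1}$.
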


\begin{thm}[Theorem 4.9 - \cite{li2023relative}]\label{thm_li49}
    Let $f : (X,\Delta) \to S$ be a projective klt $K$-trivial fibration and $\pi : X \to Y$ a fibration over $S$. Let $H$ be a nef and big Cartier divisor on $Y$ over $S$ and set $B := \pi^* H$. Then, for any Cartier divisor $\xi$ on $Y_\eta$ such that $\xi \equiv 0$, there exists $h \in \Aut^0(X_\eta,\Delta_\eta)$ and a positive integer $\ell$ such that
    \[h \cdot B_\eta - B_\eta \sim \ell \pi_\eta^* \xi,\]
    where $\pi_\eta$ is the base change of $\pi$ to $\C(S)$.
\end{thm}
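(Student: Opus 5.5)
The plan is to reduce to the geometric generic fibre, where the connected automorphism group is an abelian variety and Lemma \ref{lem_li44} applies, and then descend to $\C(S)$ and clear denominators. Set $F := \C(S)$ and $\overline{F}$ an algebraic closure. The pair $(X_{\overline{\eta}},\Delta_{\overline{\eta}})$ is projective klt with $K+\Delta \sim_\R 0$. Indeed, $K_X+\Delta \equiv_S 0$, and by the usual abundance for numerically trivial klt log canonical divisors on the generic fibre, $K+\Delta \equiv 0$ implies $\sim_\R 0$. So Theorem \ref{thm_xu} gives that $A := \textbf{Aut}^0(X_{\overline{\eta}},\Delta_{\overline{\eta}})$ is an abelian variety. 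Since $B = \pi^*H$ is Cartier on $X$, Corollary \ref{cor_phiD} gives a homomorphism of $F$-group schemes $\vartheta_{B_\eta} : \textbf{Aut}^0(X_\eta,\Delta_\eta) \to \bPic^0(X_\eta)$, $g \mapsto g^*B_\eta - B_\eta$, compatible with base change to $\overline{F}$ by Proposition \ref{prop_concomp}.

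The key step is to show that the image of $\vartheta_{B_{\overline{\eta}}}$ contains $\pi_{\overline{\eta}}^*\bPic^0(Y_{\overline{\eta}})$, up to finite index. I would argue as follows. The fibration $\pi$ is over $S$ and $B$ is pulled back from $Y$, so $A$ acts on $Y_{\overline{\eta}}$ (Blanchard's lemma for the connected group $A$ acting on $X_{\overline{\eta}}$ with $\pi_*\cO = \cO$). Moreover $\vartheta_B$ factors as $\pi^*\circ \vartheta_H$, where $\vartheta_H : A \to \bPic^0(Y_{\overline{\eta}})$ comes from the induced action. Let $A'$ be the image of $A$ in $\textbf{Aut}^0(Y_{\overline{\eta}})$. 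Since $H$ is big and nef and $A'$ is an abelian variety, the argument of Lemma \ref{lem_li44} (finite kernel, now using that $Y$ inherits a klt Calabi–Yau structure via the canonical bundle formula, or directly that $\Aut$-orbits of $A'$ preserving a big class must be finite) gives that $\vartheta_H|_{A'}$ has finite kernel. A dimension comparison is then needed: $\dim A' = \dim \bPic^0(Y_{\overline{\eta}})$, i.e. $h^1(Y,\cO)=h^1(X,\cO)$ restricted appropriately. I expect this to be the main obstacle. I would try to deduce it from the fact that $q(Y)\le q(X)$ and that the Albanese of a klt K-trivial variety is an étale-trivial fibration, so $\mathrm{Alb}(Y_{\overline{\eta}})$ is a quotient of $\mathrm{Alb}(X_{\overline{\eta}}) \simeq A/\text{finite}$ and $A$ surjects onto it. Granting this, $\vartheta_H : A' \to \bPic^0(Y_{\overline{\eta}})$ is an isogeny, hence surjective on $\overline{F}$-points.

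For the descent, let $\xi$ be Cartier on $Y_\eta$ with $\xi \equiv 0$. Some multiple $m\xi$ is algebraically trivial (the Néron–Severi torsion is finite), so $[m\xi] \in \bPic^0(Y_\eta)(F)$. The homomorphism $\vartheta_{H_\eta}: \textbf{Aut}^0(X_\eta,\Delta_\eta) \to \bPic^0(Y_\eta)$ is, over $\overline{F}$, a surjection of abelian varieties. Its fibre over $[m\xi]$ is therefore a torsor under a finite-type kernel $K$, defined over $F$. Its class lies in $H^1(F,K)$, which is killed by an integer since the component group and the abelian part, after multiplication by the exponent, reduce to a torsor under an abelian variety whose class is torsion. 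Hence some $N[m\xi]$ lifts to an $F$-point $h$. Finally, $\pi_\eta^*$ of an equality in $\bPic(Y_\eta)(F)$ gives equality in $\bPic(X_\eta)(F)$, and by Remark \ref{rmk_incpic} the two honest line bundles $h\cdot B_\eta - B_\eta$ and $Nm\,\pi_\eta^*\xi$ differ by a Brauer class. After multiplying once more (replacing $h$ by a power), we get a genuine linear equivalence with $\ell$ a positive integer.
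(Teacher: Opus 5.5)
The paper gives no proof of this statement. It is quoted from \cite{li2023relative}, and the paper only recalls the surrounding toolkit (Proposition~\ref{prop_concomp}, Theorem~\ref{thm_xu}, Lemma~\ref{lem_li44}, Corollary~\ref{cor_phiD}), which is what your outline draws on. So there is no in-paper argument to compare against. Judged on its own, your strategy is sound: over $\overline{F}$ the map $\vartheta_B=\pi^*\circ\vartheta_H$ is onto $\pi^*\bPic^0(Y_{\overline{\eta}})$, and you descend to $F=\C(S)$ at the cost of a multiple. Four points need repair; none is fatal.

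(1) The dimension step. You only need $\dim A'\ge q(Y_{\overline{\eta}})$. Your gloss ``$h^1(Y,\cO)=h^1(X,\cO)$'' is false in general, e.g.\ for $E\times E'\to E$. The missing link in your Albanese sketch is equivariance. A connected group acts on an Albanese variety by translations, and the Albanese maps are compatible with $\pi$. Hence $A$ acts on $\mathrm{Alb}(Y_{\overline{\eta}})$ by translations, through $A'$, via the homomorphism $A\to\mathrm{Alb}(X_{\overline{\eta}})\to\mathrm{Alb}(Y_{\overline{\eta}})$. The first arrow is an isogeny (this is what the isotriviality you quote provides), and the second is onto because $\pi$ is. So $A'\to\mathrm{Alb}(Y_{\overline{\eta}})$ is onto, giving $\dim A'\ge\dim\mathrm{Alb}(Y_{\overline{\eta}})=\dim\bPic^0(Y_{\overline{\eta}})$.

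(2) For the finite kernel you do not need the canonical bundle formula, which would only give a generalized pair on $Y_{\overline{\eta}}$, not a pair as in Lemma~\ref{lem_li44}. Argue directly. Let $K\subset A'$ be connected with $k^*H\cong H$ for all $k\in K$. Then $K$ acts on $\mathbb{P}H^0(Y_{\overline{\eta}},mH)$ through a homomorphism from an abelian variety to a projective linear group, which is trivial. Bigness of $H$ then forces $K$ to act trivially on $Y_{\overline{\eta}}$, so $K=\{1\}$.

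(3) In the descent, $H^1(F,K)$ is torsion but need not have bounded exponent. What you actually use is that the connecting map $\bPic^0(Y_\eta)(F)\to H^1(F,K)$ is a homomorphism into a torsion group. There is a more elementary version. Write $[m\xi]=\vartheta(g)$ with $g$ an $F'$-point for a finite Galois $F'/F$. By commutativity, $\prod_{\sigma\in\Gal(F'/F)}\sigma(g)$ is an $F$-point mapping to $[F':F]\,[m\xi]$. You can also avoid descending the action to $Y_\eta$: work with $\vartheta_{B_\eta}$ from Corollary~\ref{cor_phiD} and check over $\overline{F}$ that its image is $\pi_\eta^*\bPic^0(Y_\eta)$.

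(4) Your last step misreads Remark~\ref{rmk_incpic}. The map $\Pic(X_\eta)\to\bPic(X_\eta)(F)$ is injective, because its kernel is $\Pic(\Spec F)=0$; $\Brauer(F)$ only obstructs surjectivity. The bundles $h\cdot B_\eta-B_\eta$ and $Nm\,\pi_\eta^*\xi$ are honest line bundles with the same image, so they are already linearly equivalent. Hence $\ell=Nm$ works with no further multiple, and replacing $h$ by a power would not remove a Brauer obstruction anyway.
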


\section{From the generic to the relative weak movable cone conjecture}\label{sect_gentorel}
In this section, we ensure that \cite[Theorems 1.4 and 1.8]{li2023relative} remain valid if we weaken a hypothesis in the statements. More precisely, if $f : (X,\Delta) \to S$ is a $K$-trivial fibration, instead of asking for the existence of good minimal models for effective klt pairs of any projective variety of dimension $\dim(X/S)$, we only ask for the existence of good minimal models for effective klt perturbations of the very general fibre $(X_s, \Delta \vert_{X_s})$.

\begin{lem}[Lemma 3.7 - \cite{li2023relative}]\label{lem_lins}
    Let $f : X \to S$ be a fibration between complex quasi-projective varieties and $U \subset S$ an open subset. The following linear maps are well defined.
    \begin{enumerate}
        \item $N^1(X/S)_\R \to N^1(X_U/U)_\R, [D] \mapsto [D_U]$.
        \item $N^1(X/S)_\R \to N^1(X_\eta)_\R, [D] \mapsto [D_\eta]$.
        \item $N^1(X/S)_\R \to N^1(X_{\overline{\eta}})_\R, [D] \mapsto [D_{\overline{\eta}}]$.
        \item $N^1(X_\eta)_\R \to N^1(X_{\overline{\eta}})_\R, [D] \mapsto [D_{\overline{\eta}}]$.
    \end{enumerate}
    Moreover, for any sufficiently small open subset $U \subset S$, we have $N^1(X_U/U)_\R \simeq N^1(X_\eta)_\R$ and they both map injectively to $N^1(X_{\overline{\eta}})_\R$.
\end{lem}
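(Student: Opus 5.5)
The plan is to prove the four well-definedness statements first and then handle the ``moreover'' part by spreading out over a small open subset. Throughout, I will use that $X$ is $\Q$-factorial, so divisors on $X$ are $\Q$-Cartier and restrictions are defined after passing to multiples.

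\emph{Well-definedness of the maps.} For (1), restricting a Cartier divisor $D$ to the open subset $X_U$ gives a Cartier divisor $D_U$. If $D \equiv_S 0$, then $D_U \equiv_U 0$, because every curve contracted by $X_U \to U$ is a curve in $X$ contracted by $f$. For (2) and (3), $D_\eta$ and $D_{\overline\eta}$ are pullbacks of $D$ along $X_\eta \to X$ and $X_{\overline\eta} \to X$. To check that numerical triviality over $S$ is preserved, I would argue by spreading out. A curve $C \subset X_{\overline\eta}$ is defined over some finite extension of $\C(S)$, so it spreads out over a generically finite cover $S' \to S$ to a family of curves $\mathcal C \subset X \times_S S'$. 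Its degree against $D$ is constant in the flat family, and over closed points it is a sum of intersection numbers of $D$ with $f$-contracted curves in $X$. Hence it vanishes when $D \equiv_S 0$. Statement (4) follows by the same argument applied to $D$ on $X_\eta$: spread $D$ out over an open $U$, then apply the case already proved. Linearity is clear in all cases.

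\emph{Isomorphism over small $U$.} I would first show that $N^1(X_U/U)_\R \to N^1(X_\eta)_\R$ is surjective. A Cartier divisor on $X_\eta$ is the restriction of some divisor on $X_U$; take its closure in $X_U$, which is $\Q$-Cartier by $\Q$-factoriality. For injectivity, note that $N^1(X_\eta)$ and $N^1(X_{\overline\eta})$ are finite-dimensional. Choose finitely many divisors $D_1,\dots,D_r$ on $X$ whose classes span $N^1(X/S)$, and finitely many curves on $X_{\overline\eta}$ that detect numerical classes. After shrinking $S$, I can assume $f$ is flat with integral geometric fibres and that all these data spread out. Then a class in $N^1(X_U/U)$ whose restriction to $X_{\overline\eta}$ is numerically trivial pairs to zero with the spread-out test curves. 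The remaining point is to show it is also trivial on every $f$-contracted curve over $U$. This follows if, after shrinking $U$, numerical equivalence on fibres is locally constant, that is, the Néron–Severi group of a very general fibre equals that of the geometric generic fibre. This constancy holds over the complement of a countable union of subvarieties, not necessarily over an open set.

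\emph{Main obstacle.} I expect the hard step to be obtaining injectivity over an \emph{open} $U$ rather than only at very general points. Because of the finite dimensionality, only finitely many relations are involved. I would handle it by choosing, for each of finitely many generators of the kernel candidates, a finite set of test curves and shrinking $U$ finitely many times. Alternatively, I would invoke the identification $N^1(X_\eta) \hookrightarrow N^1(X_{\overline\eta})$ coming from $\Pic(X_\eta) \hookrightarrow \Pic(X_{\overline\eta})^{\Gal}$ modulo torsion and algebraic equivalence. Injectivity of $N^1(X_\eta) \to N^1(X_{\overline\eta})$ itself holds because numerical triviality can be tested after a field extension: intersection numbers are invariant under base change, and every curve on $X_\eta$ base changes to a sum of curves on $X_{\overline\eta}$. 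Composing this with the isomorphism from the previous step gives the final claim.
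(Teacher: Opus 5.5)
The paper does not prove this lemma; it quotes it from \cite{li2023relative}. So I judge your argument on its own terms. Your proof of (1)--(3), which spreads a curve out over a generically finite cover and uses constancy of degrees in flat families, is fine. So is injectivity of $N^1(X_\eta)_\R \to N^1(X_{\overline{\eta}})_\R$.

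The genuine gap is the step you flag yourself: injectivity of $N^1(X_U/U)_\R \to N^1(X_\eta)_\R$ for one \emph{fixed} small open $U$. You reduce it to constancy of N\'eron--Severi groups of the fibres. As you note, that constancy fails on a countable, possibly dense, union of closed subsets. You then propose to repair this with finitely many test curves and finitely many shrinkings of $U$, but that cannot work. For $D$ with $D_\eta \equiv 0$ you must show $D\cdot C = 0$ for \emph{every} curve $C$ in \emph{every} closed fibre over $U$. On the special fibres where the Picard number jumps, there are curve classes that are not specializations of curves on $X_{\overline\eta}$. Pairing $D$ with finitely many spread-out test curves says nothing about them, and you are not allowed to discard a countable union of subvarieties.

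The missing idea is that you do not need constancy of $\mathrm{NS}$. You only need that numerical triviality of a \emph{fixed} divisor coming from the total space is a locally constant condition on the base. One way to get this:
\begin{itemize}
\item Take a resolution $\mu : \widetilde X \to X$ and shrink to $U$ so that $g : \widetilde X_U \to U$ is smooth and projective. Let $A$ be a $g$-ample divisor and $n = \dim(X/S)$.
\item Since $\mu_s : \widetilde X_s \to X_s$ is surjective, $D_s \equiv 0$ if and only if $\mu^*D|_{\widetilde X_s} \equiv 0$.
\item By the Hodge index theorem (Hodge--Riemann relations) on the smooth projective variety $\widetilde X_s$, this holds if and only if $\mu^*D\cdot A^{n-1}$ and $(\mu^*D)^2\cdot A^{n-2}$ vanish on $\widetilde X_s$ (for $n=1$ the first condition alone suffices).
\item These intersection numbers are constant in the flat family $g$. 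They vanish on the generic fibre because $D_\eta \equiv 0$.
\end{itemize}
Hence $D_s \equiv 0$ for all $s \in U$, with $U$ independent of $D$. Equivalently, $c_1(\mu^*D)|_{\widetilde X_s} \in H^2(\widetilde X_s,\Q)$ is a flat section of $R^2 g_*\Q$, so its vanishing at one point propagates over the connected base.

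Your proof of (4) is also circular as written. Spreading $D$ out and ``applying the case already proved'' requires $D_U \equiv_U 0$, which is exactly the unproved step. Argue directly instead. A curve $C'$ on $X_{F'}$, with $F'$ finite over $F=\C(S)$, maps finitely onto a curve $C$ on $X_\eta$, and $D_{F'}\cdot C' = \frac{\deg(C'\to C)}{[F':F]}\, D_\eta\cdot C$.
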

One of the ingredients in the proof of Theorem \ref{thm_A} is the use of the geography of models of a klt $K$-trivial fibration $f : (X,\Delta) \to S$. The next result is an improved version of \cite[Theorem 2.7]{li2025relative}.

\begin{thm}[Theorem 3.3 - \cite{horing2024relative}]\label{thm_geopol}
    Let $f : (X,\Delta) \to S$ be a klt $K$-trivial fibration. Assume that for a very general closed point $s \in S$, good minimal models exist for effective klt perturbations of $(X_s, \Delta \vert_{X_s})$. Then for any rational polyhedral cone $\Pi \subset \Eff(X/S)$, there is a finite decomposition
    \[\Pi = \bigcup_{i=1}^m \Pi_i\]
    together with finitely many $\Q$-factorial birational contractions $\phi_i : X \dashrightarrow X_i$ over $S$ such that, for any $i \in \{1, \dotsc, m\}$, the following conditions are satisfied.
    \begin{enumerate}
        \item $\overline{\Pi_i}$ is a rational polyhedral cone,
        \item For any effective divisor $B$ such that $[B] \in \Pi_i$ and $(X,\Delta + \epsilon B)$ is klt for some $\epsilon > 0$, the birational map 
        \[\phi_i : (X,\Delta + \epsilon B) \dashrightarrow (X_i, \Delta_i + \epsilon B_i)\]
        is a good minimal model over $S$, where $\Delta_i := (\phi_i)_*\Delta$ and $B_i := (\phi_i)_* B$.
    \end{enumerate}
\end{thm}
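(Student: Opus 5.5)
The plan is to follow the proof of \cite[Theorem 2.7]{li2025relative}, which works one polytope at a time. The only change is where the finiteness of models comes from. There it was extracted from good minimal models in relative dimension $\dim(X/S)$; here it will come from Theorem \ref{thm_hxli} applied to the very general fibre.

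First I reduce to a rational polytope. Fix a rational polyhedral cone $\Pi \subset \Eff(X/S)$ and take a rational polytope $P$ that is a cross-section of $\Pi$. By Lemma \ref{lem_effrel}, each vertex class $[B_k]$ of $P$ is represented, up to $\sim_{\R,S}$, by an effective $\R$-divisor $B_k$. Choose $\epsilon>0$ small enough that $(X,\Delta+\epsilon B)$ is klt for every effective $B$ in the rational polyhedral span $\mathcal L$ of the $B_k$. Since $(X,\Delta)$ is terminal, hence klt, this is a small perturbation, and $\mathcal L$ is a rational polytope of boundaries.

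Second, I establish existence of good minimal models over $S$ on $\mathcal L$. For $B\in\mathcal L$ we have $K_X+\Delta+\epsilon B\equiv_S \epsilon B$, which is $f$-effective. After shrinking to a very general $s$, the restriction $B|_{X_s}$ is effective, so $\kappa(X_s,K_{X_s}+\Delta_s+\epsilon B_s)\ge 0$. Hypothesis (1) then says $(X_s,\Delta_s+\epsilon B_s)$ has a good minimal model. Because $\mathcal L$ is spanned by finitely many divisors, one very general $s$ works for all $B$ in $\mathcal L$ simultaneously. Theorem \ref{thm_hxli} therefore gives a good minimal model of $(X,\Delta+\epsilon B)$ over $S$ for every $B\in\mathcal L$. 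One caveat: Theorem \ref{thm_hxli} needs $f$ projective. If $f$ is only proper, I would first pass to a projective setting, which is standard since $X$ is quasi-projective.

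Third, I run the finiteness-of-models argument. The finiteness theorem of Birkar–Cascini–Hacon–McKernan in its relative, big-boundary-free form asks for a good minimal model at every point of a rational polytope of klt boundaries. The formulation used in \cite{li2025relative} (via \cite[Theorem 2.12]{hacon2013existence}-type arguments) gives exactly this. It yields a finite decomposition of $\mathcal L$ into rational polytopes $\mathcal L_i$ (relatively open pieces, whose closures are rational polytopes) and finitely many birational contractions $\phi_i$. On each $\mathcal L_i$, $\phi_i$ is a weak lc model, and it is a good minimal model since $\phi_i$-exceptional divisors are negative. The argument proceeds by induction on $\dim\mathcal L$. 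At each point one takes a good minimal model $\phi$ with ample model $\psi$. One then shows that for boundaries in a neighbourhood, the log canonical divisor becomes nef over the ample model after finitely many flips, using the MMP with scaling over $Z$ (the ample model) and the fact that $K+\Delta$ is numerically trivial over $Z$ near that point. Compactness of $\mathcal L$ then gives finiteness. Pushing forward via $B\mapsto[B]$ produces the cones $\Pi_i$ and the maps $\phi_i$ of the statement; each $\overline{\Pi_i}$ is rational polyhedral, being a cone over a rational polytope.

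The main obstacle is the local step. Near a boundary $B_0$, I must produce good minimal models of nearby pairs from the one for $B_0$ without assuming bigness. The approach is to run an MMP over the ample model $Z_0$ of $(X,\Delta+\epsilon B_0)$. Over $Z_0$, the divisor $K+\Delta+\epsilon B$ is numerically a small multiple of $B-B_0$. Its good-minimal-model existence over $Z_0$ again follows from Theorem \ref{thm_hxli}, applied to the very general fibre over $Z_0$, which sits inside a very general $X_s$. I would check carefully that hypothesis (1) transfers to fibres over $Z_0$, i.e. that good minimal models over $Z_0$ exist, by reapplying Theorem \ref{thm_hxli} relative to $Z_0\to S$.
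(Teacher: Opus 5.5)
\textbf{Gap: your local step relies on a hypothesis you do not have.} The paper does not reprove this statement. It quotes \cite[Theorem 3.3]{horing2024relative} and only remarks that the hypothesis used there (good minimal models for effective klt pairs on $X_s$) can be weakened to (1); both hypotheses concern $X_s$ alone. Your Steps 1--2 use (1) and Theorem \ref{thm_hxli} to get good minimal models over $S$ for all $(X,\Delta+\epsilon B)$ with $B\in\mathcal L$, and that is the right input. Step 3 is where the proof is not there. The finiteness-of-models results of BCHM need a big boundary, you point to no non-big version, and your sketch of a proof fails at the step you flag. To get good minimal models over the ample model $Z_0$, you want to apply Theorem \ref{thm_hxli} to $Y_0\to Z_0$ after ``transferring'' (1) to its very general fibre $G$. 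Hypothesis (1) says nothing about $G$: it concerns klt perturbations of the fixed pair $(X_s,\Delta_s)$, while $G$ is a different, typically lower-dimensional, variety inside the birational model $Y_{0,s}$. The obvious way from $X_s$ to $G$ is to restrict a good minimal model of $(X_s,\Delta_s+\epsilon B_s)$. That only works if the model already lies over $Z_{0,s}$, which is exactly what the local step is meant to prove. And Theorem \ref{thm_hxli} applied over $S$ only ever produces models over $S$.

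\textbf{Missing idea: build the models over $Z_0$ from the models over $S$ you already have.}
\begin{enumerate}
\item Take $Y_0$ to be the output of a $(K_X+\Delta+\epsilon B_0)$-MMP with scaling over $S$; it terminates because a minimal model exists.
\item For $B$ close to $B_0$, its finitely many steps are also $(K_X+\Delta+\epsilon B)$-negative. So $(Y_0,\Delta_{Y_0}+\epsilon B_{Y_0})$ still has a minimal model over $S$.
\item Since $K_{Y_0}+\Delta_{Y_0}+\epsilon B_{0,Y_0}$ is nef over $S$, the length bound for extremal rays (the Shokurov-polytope argument) gives a neighbourhood of $B_0$ in $\mathcal L$ on which every step of a $(K_{Y_0}+\Delta_{Y_0}+\epsilon B_{Y_0})$-MMP over $S$ is $(K_{Y_0}+\Delta_{Y_0}+\epsilon B_{0,Y_0})$-trivial. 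Such an MMP therefore runs over $Z_0$.
\item An MMP with scaling of an ample divisor terminates because a minimal model exists. Its output is a good minimal model over $S$ (by Remark \ref{rmk_allgood}) that lies over $Z_0$.
\end{enumerate}
With this, the induction on $\dim\mathcal L$ only needs good minimal models over the current base, never a hypothesis on fibres over $Z_0$.

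\textbf{Smaller omission.} Condition (2) concerns every effective $B$ with $[B]\in\Pi_i$, not only $B\in\mathcal L$, so ``pushing forward via $B\mapsto[B]$'' needs an argument. Since $K_X+\Delta\equiv_S 0$, the negativity lemma shows that nefness of $K_{X_i}+\Delta_i+\epsilon B_i$ over $S$ and the discrepancy inequalities depend only on $[B]$, and not on $\epsilon$. Goodness then follows from Theorem \ref{thm_hxli} and Remark \ref{rmk_allgood} applied to that particular $B$.
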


\begin{rmk}
    The original statement of \cite[Theorem 3.3]{horing2024relative} is not phrased in this way. The authors assume the existence of good minimal models for effective klt pairs on the very general fiber, but the proof only uses the slightly weaker assumption that good minimal models exist for effective klt perturbations of $(X_s,\Delta \vert_{X_s})$.
\end{rmk}
We can then weaken the assumption of \cite[Lemma 2.5]{li2023relative} to obtain the following lemma.

\begin{lem}\label{lem_existsubcone}
    Let $f : (X, \Delta) \to S$ be a klt $K$-trivial fiber space. Assume that for a very general closed point $s \in S$, good minimal models exist for effective klt perturbations of $(X_s,\Delta \vert_{X_s})$. Let $P \subset \Eff(X/S)$ be a rational polyhedral cone, and let $\Gamma \subset \psaut(X/S,\Delta)$ be a subgroup. Then there exists a rational polyhedral subcone $Q \subset P \cap \Mov(X/S)$ such that
    \begin{enumerate}
        \item $\Gamma \cdot (Q \cap N^1(X/S)_\Q) = (\Gamma \cdot P) \cap \Mov(X/S)_\Q$, and
        \item $\Gamma \cdot Q = (\Gamma \cdot P) \cap \Mov(X/S)$.
    \end{enumerate}
\end{lem}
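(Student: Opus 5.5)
Throughout, we follow the proof of \cite[Lemma 2.5]{li2023relative}, changing only the input from the minimal model program: we use Theorem \ref{thm_geopol} in place of the existence of good minimal models in relative dimension $\dim(X/S)$.

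\textbf{Step 1: chambers of $P$.} Apply Theorem \ref{thm_geopol} to $P$. This gives a finite decomposition $P = \bigcup_{i=1}^m P_i$ into pieces whose closures $\overline{P_i}$ are rational polyhedral, together with $\Q$-factorial birational contractions $\phi_i : X \dashrightarrow X_i$ over $S$. For every effective $B$ with $[B] \in P_i$ and $\epsilon$ small, $\phi_i$ is a good minimal model of $(X,\Delta+\epsilon B)$ over $S$. Since $K_X+\Delta \equiv_S 0$, the divisor $(\phi_i)_*B$ is semiample over $S$ on $X_i$.

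\textbf{Step 2: the chambers inside $\Mov(X/S)$.} Let $I$ be the set of indices $i$ for which $\phi_i$ is an isomorphism in codimension one.
\begin{itemize}
\item For $i \in I$, every class in $P_i$ is the pullback of a class that is semiample over $S$ on $X_i$. It is therefore movable over $S$, so $P_i \subset \Mov(X/S)$.
\item For $i \notin I$, the map $\phi_i$ contracts a divisor $E$. By the negativity lemma, any class in $P_i$ is then represented by a divisor with $E$ in its stable base locus over $S$. Such a class is not movable.
\end{itemize}
This shows $P \cap \Mov(X/S) = \bigcup_{i\in I} P_i$. Set $Q := \operatorname{Cone}\bigl(\bigcup_{i\in I}\overline{P_i}\bigr)$. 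Since $\overline{P_i} \subset \overline{P_i \cap \Mov(X/S)}$, each $\overline{P_i}$ for $i \in I$ lies in $P \cap \clMov(X/S)$.

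\textbf{Main obstacle: showing $Q \subset \Mov(X/S)$, not just $\clMov(X/S)$.} Boundary points of $P_i$ may fall into other chambers, so this needs an argument. I would take the following route.
\begin{itemize}
\item A rational class $[B] \in \overline{P_i} \subset P$ lies in some chamber $P_j$.
\item For rational $[B]$, the good minimal model of $(X,\Delta+\epsilon B)$ is $\phi_j$. It is also dominated by a small modification, since $[B]$ is a limit of movable classes; this uses the description of divisorial components of the stable base locus being closed under limits inside a chamber.
\item Hence $j \in I$, so $\overline{P_i} \cap N^1_\Q \subset \Mov(X/S)$.
\item Since $\Mov(X/S)$ is convex and $Q$ is rational polyhedral, generated by rational rays, we get $Q \subset \Mov(X/S)$.
\end{itemize}
If this proves delicate, I would instead refine the decomposition so that each $P_i$ is closed. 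That is, I would replace $P_i$ by a finite union of relatively open rational faces, each lying in a single chamber, and keep only the faces whose associated contraction is small.

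\textbf{Step 3: the equalities (1) and (2).} From Step 2, $Q = P \cap \Mov(X/S)$, and both inclusions in (2) then follow.
\begin{itemize}
\item Elements of $\Gamma$ are pseudoautomorphisms, so they preserve $\Mov(X/S)$. This gives $\Gamma\cdot Q \subset (\Gamma\cdot P)\cap\Mov(X/S)$.
\item Conversely, let $\gamma x$ be movable with $x \in P$. Then $x = \gamma^{-1}(\gamma x)$ is movable, so $x \in Q$.
\end{itemize}
For (1), the same argument applies on rational points. The action of $\Gamma$ preserves $N^1(X/S)_\Q$, so $\gamma x$ is rational exactly when $x$ is rational.
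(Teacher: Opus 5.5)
Your overall strategy is the paper's. You decompose $P$ via Theorem~\ref{thm_geopol}, keep the chambers lying in the movable cone, set $Q=\operatorname{Cone}(\bigcup_{i\in I}\overline{P_i})$, and use that $\Gamma$ preserves $\Mov(X/S)$. The paper indexes $I$ by the chambers containing a rational movable class, and for those it proves only $P_i\cap N^1(X/S)_\Q\subset\Mov(X/S)$. Your dichotomy is stronger: a small $\phi_i$ gives $P_i\subset\Mov(X/S)$, and a divisorial $\phi_i$ gives $P_i\cap\Mov(X/S)=\emptyset$. This yields $P\cap\Mov(X/S)=\bigcup_{i\in I}P_i$ for real classes, so (2) becomes immediate, whereas the paper only says it is proved ``in exactly the same way''.

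For the divisorial case, do not say a class is ``represented by a divisor with $E$ in its stable base locus''. Argue instead:
\begin{itemize}
\item Theorem~\ref{thm_geopol}(2) applies to \emph{every} effective $B$ with $[B]\in P_i$.
\item If $E\not\subset\Supp B$, then $a(E,X,\Delta+\epsilon B)=a(E,X,\Delta)=a(E,X_i,\Delta_i)\ge a(E,X_i,\Delta_i+\epsilon B_i)$, which contradicts the strict inequality.
\item A class in $\Mov(X/S)$ has an effective representative not containing $E$.
\end{itemize}

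The genuine gap is the step you flagged yourself: showing $\overline{P_i}\subset\Mov(X/S)$ for $i\in I$. The paper passes over this too, deducing $Q\subset\Mov(X/S)$ from \eqref{eq_Qimov}, which only covers points of $P_i$. Your justification does not work as stated:
\begin{itemize}
\item Divisorial components of stable base loci can appear in a limit. Zariski's nef and big class is a limit of ample classes, yet it has a fixed curve.
\item ``$\phi_j$ is dominated by a small modification because $[B]$ is a limit of movable classes'' is exactly what has to be proved.
\item The numerical substitute would be lower semicontinuity of a relative $\sigma_E$, together with identifying $N_\sigma$ with the divisors contracted by a minimal model. You do not set this up.
\item The fallback does not help: the cone over the closures of the kept faces can still contain discarded faces.
\end{itemize}

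There is a direct fix.
\begin{itemize}
\item For $i\in I$ the map $\phi_i$ is small. So $(\phi_i)_*$ identifies $N^1(X/S)_\R$ with $N^1(X_i/S)_\R$ and sends $P_i$ into the closed cone $\Nef(X_i/S)$, hence also sends $\overline{P_i}$ there.
\item Given $[B]\in\overline{P_i}\subset P\subset\Eff(X/S)$, take $B\ge 0$ by Lemma~\ref{lem_effrel}.
\item Then $(X_i,\Delta_i+\epsilon B_i)$ is a minimal model of $(X,\Delta+\epsilon B)$ over $S$: there are no exceptional divisors, so the discrepancy condition is empty.
\item A good minimal model exists by Theorem~\ref{thm_hxli}, so this one is good by Remark~\ref{rmk_allgood}.
\item Hence $B_i$ is semiample over $S$ and $B$ is movable.
\end{itemize}
This gives $\overline{P_i}\subset\Mov(X/S)$ and $Q=P\cap\Mov(X/S)$, and your Step 3 then goes through.
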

\begin{proof}
We argue as in \cite[Lemma 2.5]{li2023relative}.

(1) Let $P = \cup_{i = 1}^m P_i$ be a decomposition of $P$ in subcones as in Theorem \ref{thm_geopol}. The idea behind the construction of $Q$ is to retain only those $P_i$ that contain a rational point of $\Mov(X/S)$, and then take the convex cone generated by them. To make this precise, we first note the following fact:
\begin{equation}\label{eq_Qimov}
    \text{if} ~P_i \cap \Mov(X/S)_\Q \neq \emptyset, ~\text{then}~ P_i \cap N^1(X/S)_\Q \subset \Mov(X/S)_\Q.
\end{equation}
Indeed, take $[D] \in P_i \cap \Mov(X/S)_\Q$. Since $\Mov(X/S) \subset \Eff(X/S)$, we can assume that $D$ is effective by Lemma \ref{lem_effrel}. Let $0 < \epsilon \ll 1$ be such that $(X, \Delta + \epsilon D)$ is klt. For simplicity, denote by $h : X \dashrightarrow Y$ the good minimal model $\phi_i$ associated with $P_i$ as in Theorem \ref{thm_geopol}. We can assume that $h$ is an isomorphism in codimension one by \cite[Lemma 2.3]{li2025relative}. Let $B \geq 0$ such that $[B] \in P_i$. Replacing $\epsilon$ with a smaller positive real number, we can assume that $(X,\Delta + \epsilon B)$ is klt. In particular, $h$ is also a minimal model of $(X,\Delta + \epsilon B)$, hence $K_Y + \Delta_Y + \epsilon B_Y \sim_{\Q,S} \epsilon B_Y$ is semi-ample over $S$. It follows that $B$, as the strict transform of $B_Y$, is movable over $S$. Define $I := \enstq{i = 1, \dotsc, m}{P_i \cap \Mov(X/S)_\Q \neq \emptyset}$ and set 
\[Q := \operatorname{Cone}\left( \cup_{i \in I} \overline{P_i} \right) \subset P,\] 
where the last inclusion holds because $P$ is closed (being polyhedral). Since the $\overline{P_i}$ are rational polyhedral cones, so is $Q$, and $Q \subset \Mov(X/S)$ by (\ref{eq_Qimov}). 

To prove (1), let $[D] \in (\Gamma \cdot P) \cap \Mov(X/S)_\Q$. By definition, there exists $\gamma \in \Gamma$ such that $\gamma [D] \in P$, hence $\gamma [D] \in P_i$ for some $i$. In particulier, $[D] \in \gamma^{-1}Q$, which shows that 
\[(\Gamma \cdot P) \cap \Mov(X/S)_\Q \subset \Gamma \cdot (Q \cap N^1(X/S)_\Q).\]
The reverse inclusion follows from $Q \subset P$.

(2) The equality $\Gamma \cdot Q = (\Gamma \cdot P) \cap \Mov(X/S)$ is proved in exactly the same way.
\end{proof}
Thanks to this lemma, the assumptions of \cite[Theorem 1.3]{li2023relative} can also be weakened, yielding the following theorem.

\begin{thm}\label{thm_existrpc}
    Let $f : (X,\Delta) \to S$ be a terminal $K$-trivial fiber space. Assume that good minimal models exist for effective klt perturbations of the very general fibre $(X_s,\Delta \vert_{X_s})$. If there exists a polyhedral cone $P_\eta \subset \Eff(X_\eta)$ such that
    \[\Mov(X_\eta) \subset \psaut(X_\eta, \Delta_\eta) \cdot P_\eta,\]
    then there exists a rational polyhedral cone $Q \subset \Mov(X/S)$ such that
    \[\psaut(X,\Delta) \cdot (Q \cap N^1(X/S)_\Q) = \Mov(X/S)_\Q.\]
\end{thm}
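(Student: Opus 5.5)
We follow the strategy of \cite[Theorem 1.3]{li2023relative}. The only place where that argument uses good minimal models is through Lemma \ref{lem_existsubcone} and the geography of models (Theorem \ref{thm_geopol}). Both now only require good minimal models for effective klt perturbations of the very general fibre.

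The first step is to lift $P_\eta$ to a cone on $X$. By Lemma \ref{lem_lins}, we may shrink $S$ to an open $U$ with $N^1(X_U/U)_\R \simeq N^1(X_\eta)_\R$. The restriction $N^1(X/S)_\R \to N^1(X_\eta)_\R$ is surjective, since divisors on $X_\eta$ extend by closure. Pick finitely many rational generators of $P_\eta$, each the class of an effective $\Q$-divisor on $X_\eta$. Lift each to an $f$-effective $\Q$-divisor on $X$ by taking closures, and add generators of the kernel $K$ of $N^1(X/S)_\R\to N^1(X_\eta)_\R$. That kernel is spanned by classes of divisors whose support does not dominate $S$; taking both signs of such vertical classes, plus and minus, is allowed because vertical divisors $E$ with $\pm E$ $f$-effective generate $K$ up to the relevant cone. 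Concretely, $E$ and $-E$ restrict to $0$ on $X_\eta$, hence are $f$-effective in the sense of the definition. Call the resulting rational polyhedral cone $P \subset \Eff(X/S)$. Its image is $P_\eta$, and $P + K = P$.

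The second step, which I expect to be the main obstacle, is to show $\Mov(X/S)_\Q \subset \psaut(X/S,\Delta)\cdot P$. Take $[D]\in\Mov(X/S)_\Q$. Then $D_\eta$ is movable on $X_\eta$, so there is $g_\eta \in \psaut(X_\eta,\Delta_\eta)$ with $g_\eta[D_\eta]\in P_\eta$. We must extend $g_\eta$ to an element of $\psaut(X/S,\Delta)$. The map $g_\eta$ is a birational self-map of $X$ over $S$, an isomorphism in codimension one on the generic fibre. It may, however, contract or extract vertical divisors.

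To repair this, we follow Li. Run an MMP over $S$ for $(X,\Delta+\epsilon g^{-1}_*H)$, where $H$ is an $f$-ample divisor; this is possible by Theorem \ref{thm_geopol} and Theorem \ref{thm_hxli}, using the hypothesis on the very general fibre. This produces a $\Q$-factorial terminal model $X'$ isomorphic in codimension one to $X$ over $S$, on which $g$ becomes a pseudo-isomorphism $X\dashrightarrow X'$. Terminality ensures that no vertical divisor is contracted, because $K_X+\Delta\equiv_S 0$ and the discrepancies are positive. The composite then gives $g\in\psaut(X/S,\Delta)$, up to identifying $X'$ with $X$ via the chamber structure. The identification actually needed is that $g_*[D]$ differs from a class in $P$ by an element of $K$. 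This holds because $g_*D$ restricted to $X_\eta$ equals $g_\eta D_\eta\in P_\eta$, and $P+K=P$.

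Finally, apply Lemma \ref{lem_existsubcone} with $\Gamma=\psaut(X/S,\Delta)$ and this $P$. It gives a rational polyhedral $Q\subset P\cap\Mov(X/S)$ with $\Gamma\cdot(Q\cap N^1(X/S)_\Q)=(\Gamma\cdot P)\cap\Mov(X/S)_\Q=\Mov(X/S)_\Q$, the last equality by the second step.
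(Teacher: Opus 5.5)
Your proof has a genuine gap in the first step, and it is the main difficulty the paper's proof addresses. You assert that the kernel $K$ of $N^1(X/S)_\R\to N^1(X_\eta)_\R$ is spanned by vertical classes, i.e.\ $K=V$. This is false in general. It is true that $D_\eta\sim_\Q 0$ forces $D$ to be $\Q$-linearly equivalent to a vertical divisor. But $K$ also contains classes of divisors whose restriction to $X_\eta$ is numerically trivial without being $\Q$-linearly trivial, i.e.\ classes coming from non-torsion points of $\Pic^0(X_\eta)$. Such divisors can have non-zero degree on curves in special fibres that no vertical divisor detects.

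A typical example is an elliptic threefold $X\to S$ over a surface, with zero section $O$ and a section $P$ of infinite order passing through the node of the fibre over a point $0\in S$. After a small resolution, the fibre over $0$ acquires a component $\ell$ with $(P-O)\cdot\ell\neq 0$. For a generic such model, every vertical divisor is a pullback from $S$ and so has degree $0$ on $\ell$.

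Moreover, $K\setminus V$ never meets $\Eff(X/S)$. If $\sum a_i[D_i]\in K$ with $a_i>0$ and $D_i$ $f$-effective, then each $D_{i,\eta}$ is, up to a multiple, effective and numerically trivial, hence $\Q$-linearly trivial, so $[D_i]\in V$. Consequently:
\begin{itemize}
\item Your cone $P$, which satisfies $P+K=P$, is not contained in $\Eff(X/S)$. So neither Theorem \ref{thm_geopol} nor Lemma \ref{lem_existsubcone} applies to it, since both need effective representatives to run MMPs.
\item If you only add $V$ instead, the covering $\Mov(X/S)_\Q\subset\psaut(X/S,\Delta)\cdot P$ fails. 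Knowing $g_\eta[D_\eta]\in P_\eta$ controls $g_*[D]$ only modulo $K$, not modulo $V$.
\end{itemize}
In the example above, $\Eff(X/S)$ is an open half-plane plus the origin. Choosing $g$ only through its action on $N^1(X_\eta)$ cannot move every rational movable class into a fixed rational polyhedral subcone.

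The missing idea is to exploit $\Aut^0(X_\eta,\Delta_\eta)$, which acts trivially on $N^1(X_\eta)$ but by translations on the affine fibres $[B]+N$ of $\Eff(X/S)/V\to N^1(X_\eta)_\R$. The paper does this in four moves:
\begin{enumerate}
\item Pass to a good minimal model of $(X,\Delta+\epsilon B)$ (Theorem \ref{thm_hxli}) and take its semiample contraction $\tau\colon Y\to Z$.
\item Use Theorem \ref{thm_li49} to realise each numerically trivial direction on $Z_\eta$ as $h\cdot B-B$ for some $h\in\Aut^0$.
\item Use Corollary \ref{cor_phiD} to see that $h\mapsto h\cdot B_\eta-B_\eta$ is additive, so the polytope $[B]+\sum_i[0,1][h_i\cdot B-B]$ tiles $[B]+N$.
\item Globalise over the generators $D^1,\dots,D^k$ of the lifted cone by choosing finitely many $g_1,\dots,g_l$ whose images span the image of $\Aut^0(X/S,\Delta)\otimes\Q$ in $\Hom(\bigoplus_i\Q D^i, N^1(X/S)_\Q/V)$.
\end{enumerate}
This yields a rational polyhedral $\Pi$ such that every rational movable class lying over $P_\eta$ belongs to $(\Aut^0(X/S,\Delta)\cdot\Pi)+V$. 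Only then is Lemma \ref{lem_existsubcone} applied, to $\Pi+V\subset\Eff(X/S)$.

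Separately, your MMP argument for extending $g_\eta$ is confused: there is no reason to identify $X'$ with $X$. The isomorphism you need, $\psaut(X/S,\Delta)\simeq\psaut(X_\eta,\Delta_\eta)$, is \cite[Lemma 2.6]{li2023relative}, and it follows directly from terminality and $K_X+\Delta\equiv_S 0$.
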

\begin{proof}
We argue as in \cite[Theorem 1.3]{li2023relative}. After possibly enlarging $P_\eta$, we may assume that it is a rational polyhedral cone since each extremal ray of $P_\eta$ is generated by an $\R$-divisor class that is a linear combination of integral Cartier divisor classes. As $f$ is a terminal $K$-fiber space, we have $\psaut(X/S,\Delta) \simeq \psaut(X_\eta,\Delta_\eta)$ by \cite[Lemma 2.6]{li2023relative} and we will use the same notation to denote both a pseudo-automorphism on $X/S$ and its restriction to the generic fiber. Moreover, as $\Aut^0(X_\eta,\Delta_\eta)$ is a subgroup of $\psaut(X_\eta,\Delta_\eta)$, we denote by $\Aut^0(X/S,\Delta)$ the image of $\Aut^0(X_\eta,\Delta_\eta)$ under the isomorphism $\psaut(X_\eta,\Delta_\eta) \xrightarrow{\simeq} \psaut(X/S,\Delta)$. The construction of the weak fundamental domain is carried out in several steps. A divisor $D$ on $X$ is \emph{vertical} if $f\left(\Supp D\right) \neq S$, where $\Supp D$ denotes the support of the divisor $D$.

\vspace{0.1cm}

\emph{Step 1.} We show that, up to replacing $P_\eta$ by a smaller rational polyhedral cone satisfying the condition in the statement of the theorem, there exists a cone $P \subset \Mov(X/S)$ which maps onto $P_\eta$ under the map $N^1(X/S)_\R \to N^1(X_\eta)_\R$ of Lemma \ref{lem_lins}.(2).

\vspace{0.1cm}

\noindent Let $\xi$ be an effective $\Q$-Cartier divisor on $X_\eta$ such that $[\xi] \in P_\eta$. There exists a unique effective $\Q$-Cartier divisor $D$ on $X$ such that $D_\eta = \xi$ and $\Supp D$ has no vertical components. We show that if $\xi$ is movable, then $D$ is movable over $S$. Indeed, up to replacing $\xi$ by a multiple, we can assume that $\codim \bsloc \abs{\xi} \geq 2$. If $\xi \sim \xi'$ on $X_\eta$, then $\xi - \xi' = \operatorname{div}(\alpha)$ for some rational function $\alpha \in \C(X)$. Note that if $E$ is a prime vertical divisor, then there exists an effectif vertical divisor $E'$ such that $-E \sim_{\Q,S} E'$. It follows that there exists an effective vertical divisor $F$ such that $D \sim_{\Q,S} D' + F$. Since $\codim \bsloc \abs{\xi} \geq 2$, there exists $k \geq 1$ effective divisors $\xi^1, \xi^2, \dotsc, \xi^k$ such that $\xi^i \sim \xi$ for $i = 1, \dotsc, k$ and
\[\codim_{X_\eta} \left( \Supp \xi \cap \Supp \xi^1 \cap \dotsc \cap \Supp \xi^k \right) \geq 2.\]
The above discussion shows that we can find $k$ effective divisors $D^1, D^2, \dotsc, D^k$ on $X$ such that $D^i_\eta = \xi^i$ and $D \sim_{\Q,S} D^i$ for $i = 1, \dotsc, k$ (but it is possible that the supports of the $D^i$ have vertical components). Since $\Supp D$ does not have vertical components, it follows that
\[\codim_{X} \left( \Supp D \cap \Supp D^1 \cap \dotsc \cap \Supp D^k \right) \geq 2,\]
thus proving that $D$ is movable over $S$.

Recall that by Lemma \ref{lem_lins}.(4), there exists an injective map 
\[\mu : N^1(X_\eta)_\R \to N^1(X_{\overline{\eta}})_\R\]
that maps $\Mov(X_\eta)$ into $\Mov(X_{\overline{\eta}})$. Set 
\[P_{\overline{\eta}} := \mu(P_\eta) ~\text{and}~ \Gamma := \enstq{g_{\overline{\eta}}}{g \in \psaut(X_\eta, \Delta_\eta)} \subset \psaut(X_{\overline{\eta}}, \Delta_{\overline{\eta}}).\]
By Lemma \ref{lem_existsubcone}, we can find $k \geq 1$ effective divisors $\xi^1, \xi^2, \dotsc, \xi^k$ on $X_\eta$ such that
\begin{itemize}
    \item each $\xi^{i}_{\overline{\eta}}$ is movable,
    \item and if $\Pi_{\overline{\eta}} = \operatorname{Cone}\left([\xi^{i}_{\overline{\eta}}] ~\vert~ i = 1, \dotsc, k \right) \subset P_{\overline{\eta}}$, then
    \begin{equation}\label{eq_existrpc_1}
        \Gamma \cdot \Pi_{\overline{\eta}} = (\Gamma \cdot P_{\overline{\eta}}) \cap \Mov(X_{\overline{\eta}}).
    \end{equation}
\end{itemize}
By \cite[Lemma 5.1]{li2023relative}, each $\xi^i$ is movable. Let $D^i$ be the unique effective divisor on $X$ such that $D^{i}_{\eta} = \xi^{i}$ and $\Supp D^i$ does not have vertical components. By the above discussion, each $D^i$ is movable over $S$. Define the rational polyhedral cone
\[\Pi := \operatorname{Cone}\left( [D^i] ~\vert~ i=1, \dotsc, k \right) \subset \Mov(X/S)\]
and let $\Pi_\eta$ be the image of $\Pi$ under the natural map $N^1(X/S) \to N^1(X_\eta)$ of Lemma \ref{lem_lins}.(2). By construction, we have $\Pi_\eta \subset P_\eta \subset \Eff(X_\eta)$. We claim that
\begin{equation}\label{eq_existrpc_2}
    \Mov(X_\eta) = \psaut(X_\eta, \Delta_\eta) \cdot \Pi_\eta.
\end{equation}
Indeed, the inclusion "$\supset$" is clear since $\Pi_\eta \subset \Mov(X_\eta)$ by construction and $\psaut(X_\eta, \Delta_\eta)$ preserves the movable cone. For the reverse inclusion "$\subset$", since $\Mov(X_\eta) \subset \psaut(X_\eta, \Delta_\eta) \cdot P_\eta$ by assumption and $\Gamma \cdot \Pi_{\overline{\eta}} = (\Gamma \cdot P_{\overline{\eta}}) \cap \Mov(X_{\overline{\eta}})$ by (\ref{eq_existrpc_1}), we have
\[\mu(\Mov(X_\eta)) \subset \Gamma \cdot \Pi_{\overline{\eta}}.\]
As $\mu$ is injective, we finally obtain $\Mov(X_\eta) \subset \psaut(X_\eta, \Delta_\eta) \cdot \Pi_\eta$ and thus the equality (\ref{eq_existrpc_2}) is proved. Therefore, up to replacing $P_\eta$ by $\Pi_\eta$ and changing our notation by setting $P = \Pi$, we can assume in the following that the following rational polyhedral cone
\[P := \operatorname{Cone}\left( [D^i] ~\vert~ i=1, \dotsc, k \right) \subset \Mov(X/S)\]
maps onto $P_\eta$ under the map $N^1(X/S)_\R \to N^1(X_\eta)_\R$ of Lemma \ref{lem_lins}.(2).

\vspace{0.1cm}

\emph{Step 2.} Let $V \subset N^1(X/S)_\R$ be the vector subspace spanned by vertical divisors. Note that $V$ is defined over $\Q$. In this step, we show that if there exists a rational polyhedral cone $\Pi \subset \Eff(X/S)$ with the following property:
\[\forall [B] \in \Mov(X/S)_\Q, ~[B_\eta] \in P_\eta ~ \Longrightarrow ~ [B] \in \left( \Aut^0(X/S,\Delta) \cdot \Pi \right) + V,\] 
then there exists a rational polyhedral cone $Q \subset \Pi + V$ satisfying the statement of the theorem.

\vspace{0.1cm}

\noindent Suppose that such a cone $\Pi$ exists and let $[D] \in \Mov(X/S)_\Q$. By assumption, there exists $g \in \psaut(X_\eta,\Delta_\eta)$ and $[B_\eta] \in P_\eta$ such that $g[B_\eta] = [D_\eta]$. It follows that $[(g^{-1} \cdot D)_\eta] \in P_\eta$. By assumption on $\Pi$, we have $[g^{-1} \cdot D] \in \left(\Aut^0(X/S,\Delta) \cdot \Pi \right) + V$. As $\Aut^0(X/S,\Delta) \subset \psaut(X/S,\Delta)$ and $\psaut(X/S,\Delta) \cdot V = V$, we have $[D] \in \psaut(X/S,\Delta) \cdot (\Pi + V)$. This shows in particular that $\Mov(X/S)_\Q \subset \psaut(X/S,\Delta) \cdot (\Pi + V)$. By Lemma \ref{lem_existsubcone}, there is then a rational polyhedral cone $Q \subset \Pi + V$ such that
\[\psaut(X/S,\Delta) \cdot \left(Q \cap N^1(X/S)_\Q\right) = \Mov(X/S)_\Q,\]
and this completes this step. In the next two steps, we will construct a cone $\Pi$ as above.

\vspace{0.1cm}

\emph{Step 3.} We begin by proving something weaker. Let $B$ be an effective $\Q$-Cartier divisor on $X$ such that $[B] \in \Mov(X/S)_\Q$ and $[B_\eta] \in P_\eta$. In this step, we construct a rational polyhedral cone $\Pi_{B}^{h_i}$ which depends on $B$ and certain automorphisms $h_i \in \Aut^0(X_\eta,\Delta_\eta)$ such that for any effective divisor $D$ with $D_\eta \equiv B_\eta$, we have
\begin{equation}\label{eq_stp3goal}
    [D] \in \left( \Aut^0(X/S,\Delta) \cdot \Pi_{B}^{h_i} \right) + V.
\end{equation}
It is sufficient to work with a multiple of $B$, so we can assume that $B$ is a Cartier divisor.

\vspace{0.1cm}

\noindent Let $B$ and $D$ as above. For some $0 < \epsilon \ll 1$, $(X,\Delta + \epsilon B)$ has a good minimal model $\phi : X \dashrightarrow Y$ over $S$ by Theorem \ref{thm_hxli} and Remark \ref{rmk_allgood}. Moreover, by \cite[Lemma 2.2]{li2025relative}, we can assume that $\phi$ is an isomorphism in codimension one. Let $\tau : Y \to Z$ be the contraction over $S$ induced by the semiample divisor $B_Y := \phi_*B$. Up to replacing $B_Y$ by a multiple of it, we can further assume that $B_Y = \tau^*H$ for an ample divisor $H$ on $Z$ over $S$. Since $D_ \eta \equiv B_\eta$ by assumption, we can find an open subset $U \subset S$ such that $D_Y \vert_U \equiv_{U} B_Y \vert_U$ by Lemma \ref{lem_lins} and this shows that $D_Y \vert_U$ is nef over $U$. Once again, by Remark \ref{rmk_allgood}, we deduce that $D_Y \vert_U$ is semiample over $U$. In particular, there exists an ample divisor $H'$ on $Z_U$ over $U$ such that $D_Y \vert_U = \tau_U^* H'$. This implies
\begin{equation}\label{eq_stp3.1}
    D_{Y,\eta} - B_{Y,\eta} = \tau_\eta^*(H_\eta - H'_\eta),
\end{equation}
and $\tau_\eta^*H_\eta \equiv \tau_\eta^*H'_\eta$. Since $\tau$ is surjective and proper, the same holds for $\tau_\eta$ by base change. The pullback map $\tau_\eta^*$ is therefore injective and it follows that $H_\eta - H'_\eta \equiv 0$ on $Z_\eta$. By Theorem \ref{thm_li49}, there exists $g_Y \in \Aut^0(Y_\eta,\Delta_{Y,\eta})$ and an integer $k \geq 1$ such that
\[g_Y \cdot B_{Y,\eta} - B_{Y,\eta} \sim k(D_{Y,\eta} - B_{Y,\eta}).\]
As $X_\eta \dashrightarrow X_\eta$ is an isomorphism in codimension one, Lemma \ref{lem_li48} implies that
\[\Aut^0(X_\eta, \Delta_\eta) \xrightarrow{\simeq} \Aut^0(Y_\eta, \Delta_{Y,\eta}), ~g \mapsto g_Y := \phi \circ g \circ \phi^{-1}\]
is an isomorphism. Therefore $g \cdot B_\eta - B_\eta \sim k(D_\eta - B_\eta)$ and it follows that
\begin{equation}\label{eq_stp3.2}
    g \cdot B - B \sim k(D-B) ~\text{mod (vertical divisors)}.
\end{equation}
We claim that under the following map induced by Lemma \ref{lem_lins}
\[\pi : \Eff(X/S)/V \hookrightarrow N^1(X/S)_\R / V \to N^1(X_\eta)_\R,\]
there exists a finite-dimensional vector subspace $N$ such that
\begin{equation}\label{eq_stp3.3}
    \pi^{-1}\left([B_\eta]\right) = [B] + N.
\end{equation}
Indeed, up to replacing $X_\eta$ and $B_\eta$ by $Y_\eta$ and $B_{Y,\eta}$, we can assume that $B_\eta$ is semiample and we denote by $h : X_\eta \to Z_\eta$ the associated contraction. Define
\begin{equation}
    N := \operatorname{Span}_\R \enstq{[h^*\xi]}{\xi \equiv 0 ~\text{on}~ Z_\eta}/V
\end{equation}
where, by abuse of notation, $[h^*\xi]$ denotes the divisor class $[\Xi] \in N^1(X/S)_\R / V$ such that $\Xi$ is a divisor on $X$ satisfying $h^*\xi \sim_\Q \Xi_\eta$. Moreover, the class $[\Xi]$ depends uniquely on $\xi$.

We prove the equality (\ref{eq_stp3.3}). For the inclusion "$\subset$", take an effective $\Q$-Cartier divisor $D$ on $X$ such that $\pi([D]) = [B_\eta]$, that is, such that $D_\eta \equiv B_\eta$ on $X_\eta$. Arguing as above, we find as in (\ref{eq_stp3.2})
\[D \sim_{\Q} B + \dfrac{1}{k}(g \cdot B - B) ~\text{mod (vertical divisors)}\]
such that, by (\ref{eq_stp3.1}), we have
\[\dfrac{1}{k}(g \cdot B - B)_\eta \sim_{\Q} D_\eta - B_\eta.\]
Since $D_\eta \equiv B_\eta$, this proves that $[D] \in [B] + N$. For the reverse inclusion "$\supset$", any $\Q$-Cartier divisor $D$ on $X$ with $[D] \in [B] + N$ is $\Q$-linearly equivalent to an effective divisor since $B_\eta$ is, up to a multiple, the pullback of ample divisor on $Z_\eta$. This proves the desired equality. Furthermore, $N$ is finite-dimensional since $N^1(X/S)_\R$ itself is finite-dimensional by \cite[IV.$\S$4 - Proposition 3]{kleiman1966toward}. 
Let $t \geq 1$ be an integer and $D_1, D_2, \dotsc, D_t$ be Cartier divisors on $X$ such that $D_{i,\eta} \equiv B_\eta$ and the classes $[D_i] - [B]$ span $N$. For $i = 1, \dotsc, t$, by (\ref{eq_stp3.2}), there exists an integer $k_i \geq 1$ and $h_i \in \Aut^0(X_\eta,\Delta_\eta)$ such that
\begin{equation}\label{eq_pav}
h_i \cdot B - B \sim k_i(D_i - B) ~\text{mod (vertical divisors)}.
\end{equation}
We define the following rational polytope
\begin{equation}\label{eq_stp3.4}
    Q_{B}^{h_i} := [B] + \sum_{i=1}^t [0,1] \cdot [h_i \cdot B - B] \subset N^1(X/S)_\R
\end{equation}
and the following rational polyhedral cone
\[\Pi_{B}^{h_i} := \operatorname{Cone}(Q_{B}^{h_i}).\]
Let $\widetilde{Q}_{B}^{h_i}$ and $\widetilde{\Pi}_{B}^{h_i}$ be the images of $Q_{B}^{h_i}$ and $\Pi_{B}^{h_i}$ respectively under the quotient map $N^1(X/S)_\R \to N^1(X/S)_\R /V$. We claim that
\begin{equation}\label{eq_stp3.5}
    \pi^{-1}\left([B]\right) \subset \langle h_i ~\vert~ i=1, \dotsc, t \rangle \cdot \widetilde{\Pi}_{B}^{h_i},
\end{equation}
where $\langle h_i ~\vert~ i=1, \dotsc, t \rangle$ is the subgroup of $\Aut^0(X_\eta, \Delta_\eta)$ generated by the $h_i$. Assuming the inclusion above, we then see that $\widetilde{\Pi}_{B}^{h_i}$ satisfies (\ref{eq_stp3goal}) and we are done. We now prove (\ref{eq_stp3.5}). By Corollary \ref{cor_phiD}, there exists a homomorphism of algebraic groups
\[\vartheta_{B_{Y,\eta}} : \textbf{Aut}^0(Y_\eta, \Delta_{Y,\eta}) \to \bPic^0(Y_\eta).\]
The corresponding map on the $\overline{\C(S)}$-points is the following morphism of abelian groups
\[\Aut^0(Y_\eta, \Delta_{Y,\eta}) \to \Pic^0(Y_\eta), ~h_Y \mapsto h_Y^*B_{Y,\eta} - B_{Y,\eta}.\]
By Lemma \ref{lem_li48}, we have an isomorphism of groups $\Aut^0(X_\eta,\Delta_\eta) \simeq \Aut^0(Y_\eta, \Delta_{Y,\eta})$, consequently
\[\Aut^0(X_\eta, \Delta_{\eta}) \to \Pic^0(X_\eta), ~h \mapsto h^*B_{\eta} - B_{\eta}\]
is still a morphism of abelian groups. Finally, since $h \cdot B_\eta = (h^{-1})^*B_\eta$ for any $h \in \Aut(X_\eta)$, we see that
\begin{equation}\label{eq_stp3.6}
    \Aut^0(X_\eta, \Delta_{\eta}) \to \Pic(X_\eta), ~h \mapsto h \cdot B_{\eta} - B_{\eta}
\end{equation}
is a morphism of abelian groups. If $\enstq{[D_i] - [B]}{i = 1, \dotsc, s}$ is a basis of $N$ for $s \leq t$, then $\langle h_i ~\vert~ i = 1, \dotsc, s \rangle$ acts on $[B] + N$ by translation according to (\ref{eq_stp3.6}). By definition of $Q_{B}^{h_i}$, we see that the polytope $\widetilde{Q}_{B}^{h_i}$ tiles $[B] + N = \pi^{-1}([B_\eta])$ under the action of $\langle h_i ~\vert~ i = 1, \dotsc, s \rangle$. This proves (\ref{eq_stp3.5}).

\vspace{0.1cm}

\emph{Step 4.} We globalize the construction of the cone $\Pi_{B}^{h_i}$ to obtain the cone $\Pi$ mentioned in Step 2.

\vspace{0.1cm}

\noindent For all $g \in \Aut^0(X/S,\Delta)$ and for all Cartier divisor $D$ on $X$, we define
\begin{equation}\label{eq_action}
    g \star D := [g \cdot D - D] \in N^1(X/S)/\left(V\cap N^1(X/S)\right).
\end{equation}
Recall that in Step 1, we have lifted $P_\eta$ to a rational polyhedral cone
\begin{equation}\label{eq_rationalcone}
    P = \operatorname{Cone}\left( [D^i] ~\vert~ i=1, \dotsc, k \right) \subset \Mov(X/S),
\end{equation}
which means that $P$ maps onto $P_\eta$ under the map $N^1(X/S)_\R \to N^1(X_\eta)_\R$ of Lemma \ref{lem_lins}.(2). We use the above action (\ref{eq_action}) to define the following map
\[G_g : \bigoplus_{i=1}^k \Q \cdot D^i \to N^1(X/S)_\Q/\left(V \cap N^1(X/S)_\Q\right), \sum_{i=1}^k a_i D^i \mapsto \sum_{i=1}^k a_i (g \star D^i).\]
Since $\Aut^0(X_\eta,\Delta_\eta) \simeq \Aut^0(X/S,\Delta)$ is an abelian group and
\begin{equation}\label{eq_stp4.1}
    \Aut^0(X_\eta, \Delta_{\eta}) \to \Pic(X_\eta), ~h \mapsto h \cdot D_{\eta} - D_{\eta}
\end{equation}
is a morphism of abelian groups for all Cartier divisor $D$ on $X$ by (\ref{eq_stp3.6}), we have $G_{g+h} = G_g + G_h$. Set $\Aut^0(X/S,\Delta)_\Q := \Aut^0(X/S,\Delta) \otimes_\Z \Q$ and, for $\tau = \sum_j r_j g_j \in \Aut^0(X/S,\Delta)_\Q$ and $D \in \oplus_{i=1}^k \Q \cdot D^i$, we define 
\[G_\tau(D) := \sum_j r_j(g_j \star D) \in N^1(X/S)_\Q / \left(V \cap N^1(X/S)_\Q \right).\]
It is not clear that this definition is independent of the decomposition $\tau = \sum_j r_j g_j$, so we need to prove that everything is well defined. Take another decomposition $\tau = \sum_t r'_t g'_t$ and choose an integer $m \geq 1$ such that both $mr_j$ and $mr'_t$ lie in $\Z$ for all indices $j$ and $t$; in particular, we have $m\tau \in \Aut^0(X/S,\Delta)$. Write $D = \sum_{i=1}^k a_i D^i$ with $a_1, \dotsc, a_k \in \Q$. Since (\ref{eq_stp4.1}) is a morphism of abelian groups, it follows that
\[(mr_jg_j) \star D^i = mr_j(g_j \star D^i), ~(mr'_tg'_t) \star D^i = mr'_t(g'_t \star D^i)\]
and
\[G_{m\tau}(D) = \sum_{i=1}^k a_i \left((m\tau) \star D^i\right) = \sum_{i=1}^k a_i \left( \sum_j mr_j(g_j \star D^i) \right) = \sum_{i=1}^k a_i \left( \sum_t mr'_t(g'_t \star D^i) \right).\]
This shows that the definition of $G_\tau(D)$ does not depend on the expression of $\tau$. Therefore, we obtain a well-defined map
\[\Aut^0(X/S,\Delta)_\Q \times \bigoplus_{i=1}^k \Q \cdot D^i \to N^1(X/S)_\Q / \left(V \cap N^1(X/S)_\Q \right), ~(\tau, D) \mapsto G_\tau(D)\]
inducing the following linear map
\[j : \Aut^0(X/S,\Delta)_\Q \to \Hom_\Q\left(\bigoplus_{i=1}^k \Q \cdot D^i, ~N^1(X/S)_\Q / \left(V \cap N^1(X/S)_\Q \right) \right).\]
Note that the $\Q$-vector space on the right is finite-dimensional. Choose $g_1, \dotsc, g_l \in \Aut^0(X/S,\Delta)$ such that $\{G_{g_1}, \dotsc, G_{g_l}\}$ is a basis of $j(\Aut^0(X/S,\Delta)_\Q)$. For each $D^i$, we define as in (\ref{eq_stp3.4}) a rational polytope $Q_{D^i}^{g_j}$ associated to $g_1, \dotsc, g_l$. Then, we define the following rational polytope
\begin{equation}\label{eq_stp4lambda}
    \Lambda := \sum_{i = 1}^k [0,1]Q_{D^i}^{g_j} \subset N^1(X/S)
\end{equation}
and, finally, we define the following rational polyhedral cone
\begin{equation}
    \Pi := \operatorname{Cone}(\Lambda).
\end{equation}
We claim that $\Pi$ satisfies the condition of Step 2, that is, we claim that for any $[D] \in \Mov(X/S)_{\Q}$, if $[D_\eta] \in P_\eta$, then $[D] \in \left(\Aut^0(X/S,\Delta) \cdot \Pi\right) + V$. First, if $B = \sum_i r_i D^i$ with $r_i \in \Q_{\geq 0}$, we know by Step 3 that there exists an integer $t \geq 1$ and $\enstq{h_i}{i = 1, \dotsc, t} \subset \Aut^0(X/S,\Delta)$ such that $\enstq{[h_i \cdot B - B]}{i = 1, \dotsc, t}$ generates $N$. By the choice of the $g_i$, we have
\[\operatorname{Span}_\Q \enstq{G_{h_i}}{i = 1, \dotsc, t} \subset \operatorname{Span}_\Q \enstq{G_{g_j}}{j = 1, \dotsc, l}\]
and, consequently, we also have
\[\operatorname{Span}_\Q \enstq{G_{h_i}(B)}{i = 1, \dotsc, t} \subset \operatorname{Span}_\Q \enstq{G_{g_j}(B)}{j = 1, \dotsc, l}.\]
The above inclusion shows in particular that $\enstq{[g_j \cdot B - B]}{j = 1, \dotsc, l}$ generates $N$. Therefore, the work done in Step 3 shows that
\begin{equation}\label{eq_stp4.2}
    \pi^{-1}\left([B_\eta]\right) \subset \Aut^0(X/S,\Delta) \cdot \widetilde{\Pi}^{g_i}_{B}.
\end{equation}
Then, note that if $[D] \in \Mov(X/S)_\Q$ is a rational class satisfying $[D_\eta] \in P_\eta$, then we can find $B = \sum_i r_i D^i$ with $r_i \in \Q_{\geq 0}$ such that $D_\eta \equiv B_\eta$ or, equivalently, such that $[D] \in \pi^{-1}\left([B_\eta]\right)$. Indeed, recall that we have lifted $P_\eta$ to a rational polyhedral cone $P \subset \Mov(X/S)$ generated by the $[D^i]$, see (\ref{eq_rationalcone}). Therefore, by (\ref{eq_stp4.2}), it is sufficient to show that $\Pi_{B}^{g_i} \subset \Pi$ for any $B = \sum_i r_i D^i$ with $r_i \in \Q_{\geq 0}$.

Finally, let $B$ be as above. For every $j = 1, \dotsc, l$ and for every real number $\mu_j \in [0,1]$, we have
\[B + \sum_{j=1}^l \mu_j(g_j \cdot B - B) = \sum_{i} r_i \left(D^i + \sum_{j=1}^l \mu_j(g_j \cdot D^i - D^i) \right).\]
By the definition (\ref{eq_stp3.4}) of $Q_{B}^{g_i}$ and by the definition (\ref{eq_stp4lambda}) of $\Lambda$, we have $\Pi^{g_i}_{B} \subset \Pi$ and we are done. As stated in Step 2, we now only need to apply Lemma \ref{lem_existsubcone} to obtain the cone $Q$ in the statement of the theorem, and this completes the proof.
\end{proof}
These versions with more flexible assumptions allow us to prove the following theorem which improves on \cite[Theorem 1.8]{li2023relative}.

\begin{thm}\label{thm_limovconj}
    Let $f : (X,\Delta) \to S$ be a terminal $K$-trivial fiber space. Assume that good minimal models exist for effective klt perturbations of the very general fibre $(X_s,\Delta \vert_{X_s})$. Denote by $\Gamma$ the image of $\psaut(X/S,\Delta)$ under the natural group homomorphism $\psaut(X/S,\Delta) \to \GL(N^1(X/S)_\R)$. If there exists a polyhedral cone $P_\eta \subset \Eff(X_\eta)$ such that
    \[\Mov(X_\eta) \subset \psaut(X_\eta,\Delta_\eta) \cdot P_\eta,\]
    then $\Mov^+(X/S)$ admits a weak rational polyhedral fundamental domain under the action of $\Gamma$. Moreover, if $\Mov(X/S)$ is non-degenerate, then $\Mov^+(X/S)$ admits a rational polyhedral fundamental domain under the action of $\Gamma$.
\end{thm}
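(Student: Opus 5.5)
The plan is to combine Theorem \ref{thm_existrpc} with the convex-geometric Proposition \ref{prop_li3.6} (and, in the non-degenerate case, with Proposition \ref{prop_poltype} and the subsequent Lemma of Looijenga). Set $C := \Mov(X/S)^\circ$, an open convex cone in $N^1(X/S)_\R$. It is full-dimensional, since the relative ample cone is open and contained in $\Mov(X/S)$. The group $\Gamma$ acts linearly on $N^1(X/S)_\R$, preserves $C$ and the lattice $N^1(X/S)$, and acts faithfully by definition. Moreover $C^+ = \Mov^+(X/S)$, because $\overline{C} = \clMov(X/S)$.

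First, I apply Theorem \ref{thm_existrpc}. This gives a rational polyhedral cone $Q \subset \Mov(X/S)$ with $\psaut(X/S,\Delta)\cdot(Q \cap N^1(X/S)_\Q) = \Mov(X/S)_\Q$. Since $Q$ is rational polyhedral and contained in $\clMov(X/S)$, we get $Q \subset C^+$. Also, every rational point of $C$ lies in $\Mov(X/S)_\Q$, so $C \cap N^1(X/S)_\Q \subset \Gamma\cdot Q$.

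In the non-degenerate case, condition (3) of Proposition \ref{prop_poltype} holds. Hence $(C^+,\Gamma)$ is of polyhedral type, and the Lemma following that proposition yields a rational polyhedral fundamental domain for $\Mov^+(X/S)$.

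In general, I use Proposition \ref{prop_li3.6}, which requires two things. The first is that $W := W(\clMov(X/S))$ is defined over $\Q$. The second is that $(\widetilde{C}^+,\widetilde{\Gamma})$ is of polyhedral type in $N^1(X/S)_\R/W$.

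For the rationality of $W$, I would argue as follows. The cone $\clMov(X/S)$ is the $\Gamma$-orbit closure of a rational polyhedral cone. Its rational points are dense: they are dense in $C$, and $C$ is dense in $\clMov(X/S)$. Using $\Gamma\cdot Q \supset C\cap N^1_\Q$, one gets $\overline{\Gamma\cdot Q}=\clMov(X/S)$. The lineality space $W$ should then be cut out by rational linear conditions. The cleanest approach is to follow \cite{li2023relative}: there, $W$ is identified with a rational subspace spanned by classes of the form $g\cdot D - D$ with $g\in\Aut^0(X/S,\Delta)$, together with vertical classes, as in Steps 3--4 of Theorem \ref{thm_existrpc}. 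I expect this identification to be the main obstacle, and I would try to show $W\cap N^1_\Q$ spans $W$ by exhibiting enough rational vectors of this form.

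Given that, set $p : N^1(X/S)_\R \to N^1(X/S)_\R/W$. The image $\widetilde{C}$ is open, full-dimensional and non-degenerate. The images of the rational points of $C$ are exactly the rational points of $\widetilde{C}$ (rational classes lift because $W$ is rational), so $\widetilde{C}\cap(\cdot)_\Q \subset \widetilde{\Gamma}\cdot\widetilde{Q}$. Faithfulness is not needed here, since $\widetilde{\Gamma}$ is defined as the image group. Proposition \ref{prop_poltype}(3)$\Rightarrow$(2) then gives polyhedral type. Proposition \ref{prop_li3.6} now yields a rational polyhedral $\Pi \subset C^+$ with $\Gamma\cdot\Pi = \Mov^+(X/S)$ and, for each $\gamma\in\Gamma$, either $\gamma\Pi=\Pi$ or $\gamma\Pi\cap\Pi^\circ=\emptyset$. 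This is exactly a weak rational polyhedral fundamental domain.
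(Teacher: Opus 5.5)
Your outline matches the paper's proof. Theorem \ref{thm_existrpc} supplies $Q$, and the rational points of $\Mov(X/S)$, hence of its image in $N^1(X/S)_\R/W$, lie in $\Gamma\cdot Q$. Proposition \ref{prop_poltype}(3) then gives polyhedral type on the quotient, and Proposition \ref{prop_li3.6} yields the weak fundamental domain. In the non-degenerate case, your appeal to Looijenga's lemma is equivalent to the paper's remark that the stabiliser in Proposition \ref{prop_li3.6} is trivial when $W=0$. Your checks that $\widetilde{C}$ is non-degenerate and that rational points of $\widetilde{C}$ lift to rational points of $C$ are correct.

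The step you leave open is that $W$ is defined over $\Q$. The paper does not prove this; it imports it from \cite[Proposition 3.8]{li2023relative}. Neither of your two sketches would supply it.

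\emph{The convex-geometric heuristic is false in general.} Take $\gamma=\begin{pmatrix}2&1\\1&1\end{pmatrix}$ acting on $\Z^2$ and $\ell(x,y)=\tfrac{1+\sqrt5}{2}x+y$. Then $\ell\circ\gamma=\tfrac{3+\sqrt5}{2}\,\ell$, so $\gamma$ preserves the open half-plane $C=\{\ell>0\}$. The cones $\gamma^{-n}(\R_{\ge0}^2)$ increase and exhaust $C\cup\{0\}$, so $C\cap\Q^2$ is covered by the orbit of a rational polyhedral cone. Yet $W=\ker\ell$ has irrational slope. So the covering property alone cannot force rationality; input from the fibration is needed.

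\emph{Your proposed description of $W$ is also wrong,} because vertical classes need not lie in $\clMov(X/S)$. Take a relatively minimal elliptic surface over a curve with an $I_2$ fibre $E_1+E_2$. Then $[E_1]=-[E_2]\neq 0$ in $N^1(X/S)$ and $E_1^2=E_2^2=-2$, so neither $\pm[E_1]$ is relatively nef. Since $\clMov(X/S)\subset\Nef(X/S)$ for surfaces, neither lies in $\clMov(X/S)$, and in this example $W=0$.

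Replace that paragraph by the citation of \cite[Proposition 3.8]{li2023relative}; with that in place, the rest of your argument goes through unchanged.
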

\begin{proof}
     Let $W$ be the maximal vector space contained in $\clMov(X/S)$. By \cite[Proposition 3.8]{li2023relative}, $W$ is defined over $\Q$. If $p : N^1(X/S)_\R \to N^1(X/S)_\R / W$ is the quotient map, for every subset $A \subset N^1(X/S)_\R$, we denote by $\tilde{A}$ the image $p(A)$ of $A$ by $p$. By Theorem \ref{thm_existrpc}, there exists a rational polyhedral cone $Q \subset \Mov(X/S)$ such that
    \begin{equation}\label{eq_egalcone}
        \psaut(X,\Delta) \cdot (Q \cap N^1(X/S)_\Q) = \Mov(X/S)_\Q.
    \end{equation}
    Note that $\widetilde{\Mov(X/S)}$ is non-degenerate and $\widetilde{Q} \subset \widetilde{\Mov^+(X/S)}$. The above equality (\ref{eq_egalcone}) implies that
    \[\widetilde{\Mov(X/S)}_\Q \subset \psaut(X,\Delta) \cdot \widetilde{Q},\]
    therefore $(\widetilde{\Mov^+(X/S)}, \widetilde{\Gamma_B})$ is of polyhedral type by Proposition \ref{prop_poltype}.(3), where $\widetilde{\Gamma_B}$ is the image of $\psaut(X/S,\Delta)$ in $\GL(N^1(X/S)_\R/W)$. It follows that there exists a weak rational polyhedral fundamental domain $\Pi \subset \Mov^+(X/S)$ for $\Mov^+(X/S)$ under the action of $\Gamma_B$ by Proposition \ref{prop_li3.6} such that
    \[\enstq{\gamma \in \Gamma_B}{\gamma \Pi = \Pi} = \enstq{\gamma \in \Gamma_B}{\gamma \text{ acts trivially on } N^1(X/S)_\R / W}.\]
    In particular, if $W=0$, then $\Pi$ is a rational polyhedral fundamental domain.
\end{proof}
Similarly, we obtain the following version of \cite[Theorem 1.5]{li2023relative}.

\begin{thm}\label{thm_fincontmov}
    Let $f : (X, \Delta) \to S$ be a klt $K$-trivial fiber space. Assume that good minimal models exist for effective klt perturbations of the very general fibre $(X_s,\Delta \vert_{X_s})$. If there exists a rational polyhedral cone $Q \subset \Mov(X/S)$ such that
    \[\Mov(X/S)_\Q \subset \psaut(X/S,\Delta) \cdot Q,\]
    then the set $\enstq{Z}{X \to Z ~\text{is a contraction over}~ S}$ is finite.
\end{thm}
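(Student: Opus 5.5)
We follow the proof of \cite[Theorem 1.5]{li2023relative}, using Theorem \ref{thm_geopol} in place of the stronger minimal model hypothesis there. The idea is that every contraction $X \to Z$ over $S$ comes from a movable class, and Theorem \ref{thm_geopol} leaves only finitely many birational models above $Q$. Combining this with the $\psaut(X/S,\Delta)$-action gives finiteness.

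\emph{Step 1: reduce to $Q$.} Let $\pi : X \to Z$ be a contraction over $S$, and fix a relatively ample Cartier divisor $A$ on $Z$. Then $D := \pi^*A$ is semiample over $S$, hence movable, and $[D] \in \Mov(X/S)_\Q$. By hypothesis there is $g \in \psaut(X/S,\Delta)$ with $g[D] \in Q$. Since $g$ is an isomorphism in codimension one, the composite $\pi \circ g^{-1} : X \dashrightarrow Z$ is a birational contraction-type map over $S$. The divisor $g \cdot D$ is the strict transform of $D$, and its section ring over $S$ equals that of $D$. Hence $Z \simeq \operatorname{Proj}_S \bigoplus_m f_*\cO_X(m\, g\cdot D)$. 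So $Z$ is the ample model over $S$ of some rational class in $Q \subset \Eff(X/S)$, and it suffices to show that only finitely many varieties $Z$ arise this way from classes in $Q$.

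\emph{Step 2: finiteness over $Q$.} Apply Theorem \ref{thm_geopol} to $Q$. This gives a decomposition $Q = \bigcup_{i=1}^m Q_i$ and birational contractions $\phi_i : X \dashrightarrow X_i$ over $S$ such that, for any effective $B$ with $[B] \in Q_i$ and small $\epsilon$, the pair $(X_i,\Delta_i+\epsilon B_i)$ is a good minimal model over $S$. Here we use Lemma \ref{lem_effrel} to choose an effective representative, since $Q \subset \Mov(X/S) \subset \Eff(X/S)$. Because $K_{X_i}+\Delta_i \equiv_S 0$, the divisor $B_i$ is semiample over $S$. The ample model $Z$ of $B$ is therefore the contraction of $X_i$ defined by the face of $\Nef(X_i/S)$ containing $[B_i]$ in its relative interior.

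Each $\phi_i$ may be taken to be an isomorphism in codimension one, by \cite[Lemma 2.3]{li2025relative}, since $[B]$ is movable. Then $B_i$ is nef, and by the relative cone theorem for the klt pair $(X_i,\Delta_i+\epsilon B_i)$, the relevant part of $\Nef(X_i/S)$ meeting the image of $Q_i$ has finitely many faces. Concretely, $\overline{Q_i}$ is rational polyhedral and the map $B \mapsto B_i$ sends it into $\Nef(X_i/S)$. To obtain finitely many faces, apply Theorem \ref{thm_geopol} again to each face of $\overline{Q_i}$, or refine the decomposition so that on each relatively open cell the ample model is constant. This is the standard finiteness of ample models, \cite[Corollary 1.1.5]{birkar2012existence}, in its relative form. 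Hence only finitely many $Z$ occur.

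\emph{Main obstacle.} The delicate point is Step 2. Theorem \ref{thm_geopol} only gives constancy of the minimal model on each $Q_i$, not of the ample model, so the decomposition must be refined. I would prove a refinement: for a rational polyhedral cone of classes that are nef on a fixed good minimal model $X_i$, there are finitely many ample models. Indeed, $\overline{Q_i}$ maps linearly into $\Nef(X_i/S)$, and semiampleness holds on all rational classes. The ample model depends only on which curves over $S$ are contracted, and this is determined by the smallest face of $\Nef(X_i/S)$ containing the class. The linear image of the polyhedral cone $\overline{Q_i}$ meets only finitely many such faces, provided these faces are cut out by finitely many extremal rays of $\overline{NE}(X_i/S)$. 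This in turn follows from the cone theorem applied to $K_{X_i}+\Delta_i+\epsilon' B'$, with $B'$ in the interior of the image.
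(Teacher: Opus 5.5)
Your reduction matches the paper's proof of Theorem \ref{thm_fincontmov}. You move $g^*H$ into $Q$ by an element of $\psaut(X/S,\Delta)$ and note that section rings are unchanged under maps that are isomorphisms in codimension one. You then decompose $Q=\bigcup Q_i$ via Theorem \ref{thm_geopol}, take each $\phi_i$ small by \cite[Lemma 2.3]{li2025relative}, and identify $Z$ with the ample model on $X_i$ of the semiample class $\phi_{i,*}\tau_*g^*H$. At that point the paper just observes that the transform $Q_i^{X_i}$ of $\overline{Q_i}$ is a rational polyhedral cone inside $\Nef(X_i/S)$, and that only finitely many contractions correspond to its faces.

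What goes wrong is your justification of that last point, the step you call the main obstacle. The cone theorem applied to $K_{X_i}+\Delta_i+\epsilon' B'$ gives nothing here. Since $K_{X_i}+\Delta_i\equiv_S 0$, that divisor is numerically equivalent over $S$ to the nef divisor $\epsilon' B'$, so there are no negative extremal rays to count. Likewise, BCHM-type finiteness of ample models needs a general ample, hence big, part in the boundary, which $B$ need not have; this is why Theorem \ref{thm_geopol} is used at all. Re-applying Theorem \ref{thm_geopol} to faces is also unnecessary.

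The finiteness is pure convexity, and your idea of using the minimal face of $\Nef(X_i/S)$ containing the class already contains it:
\begin{itemize}
\item Let $x$ lie in the relative interior of a face $F$ of $Q_i^{X_i}$, and write $x=\sum_j a_j v_j$ with $a_j>0$ and $v_j$ generators of $F$.
\item Each $v_j$ is nef over $S$. So for a curve $C\subset X_i$ lying in a fibre over $S$, we have $x\cdot C=0$ if and only if $v\cdot C=0$ for all $v\in F$.
\item Hence the curves contracted by the semiample class $x$ depend only on $F$. So does the contraction it defines, up to isomorphism over $S$, since it has connected fibres and normal target.
\item Therefore at most as many contractions occur as there are faces of the finitely many cones $Q_i^{X_i}$.
\end{itemize}
You only need semiampleness of the particular class $\phi_{i,*}\tau_*g^*H$, which lies in $Q_i$ and is supplied by Theorem \ref{thm_geopol}(2). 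You do not need it for all rational classes of $\overline{Q_i}$, which you assert without proof.

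A minor point: deducing that $B_i$ is semiample from semiampleness of $K_{X_i}+\Delta_i+\epsilon B_i$ requires $K_{X_i}+\Delta_i$ to be $\mathbb{R}$-linearly trivial over $S$, as the paper uses in Lemma \ref{lem_existsubcone}. Numerical triviality over $S$ alone does not transfer semiampleness.
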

\begin{proof}
    We argue as in \cite[Theorem 1.5]{li2023relative}. Define
    \[C := \operatorname{Cone}\enstq{g^*H}{g : X \to Z ~\text{over}~ S, ~H ~\text{an ample Cartier divisor over}~S}\]
    and note that $C \subset \Mov(X/S)$. By assumption, we have
    \begin{equation}\label{eq_cpsaut}
        C \cap N^1(X/S)_\Q \subset \psaut(X/S,\Delta) \cdot Q.
    \end{equation}
    Let $Q = \cup_{i = 1}^m Q_i$ be a decomposition of $Q$ in subcones as in Theorem \ref{thm_geopol} and denote by $\phi_i : X \dashrightarrow X_i$ the associated $\Q$-factorial birational contractions over $S$ for $i = 1, \dotsc, m$. By \cite[Lemma 2.3]{li2025relative}, we may assume that each $\phi_i$ is an isomorphism in codimension one. For simplicity, denote by $Q_i^{X_i}$ the strict transform by $\phi_i$ of the rational polyhedral cone $\overline{Q_i}$. Observe that $Q_i^{X_i} \subset \Nef(X_i/S)$. It follows that there are only finitely many contractions 
    \[h_i^{X_{i_k}} : X_i \to Y_i^{i_k}\]
    over $S$ corresponding to the faces of $Q_i^{X_i}$. We claim that if $g : X \to Y$ is a contraction over $S$, then $Y$ is isomorphic over $S$ to one of the $Y_i^{i_k}$.
    
    Indeed, let $H$ be an ample Cartier divisor on $Z$. By (\ref{eq_cpsaut}), there exists $\tau \in \psaut(X/S,\Delta)$ such that $\tau_*g^*H \in Q$. Let $i \in \{1, 2, \dotsc, m\}$ be an index such that $\tau_*g^*H \in Q_i$. By Theorem \ref{thm_geopol}.(2), the $\Q$-divisor $\phi_{i,*}\tau_*g^*H$ is semiample over $S$. After replacing it by a suitable multiple, we may assume that it is Cartier. Let $i_k$ be an index such that
    \[Y_i^{i_k} = \operatorname{Proj}_S R(X_i, \phi_{i,*}\tau_*g^*H), ~\text{where}~ R(X_i, \phi_{i,*}\tau_*g^*H) := \bigoplus_{n \geq 0} H^0(X_i, n\phi_{i,*}\tau_*g^*H).\]
    Similarly, we have $Y = \operatorname{Proj}_S R(X,g^*H)$. Since $\phi_i \circ \tau : X \dashrightarrow X \dashrightarrow X_i$ is a composition of birational maps that are isomorphisms in codimension one, we obtain
    \[R(X,g^*H) \simeq R(X,\tau_*g^*H) \simeq R(X_i, \phi_{i,*}\tau_*g^*H).\]
    Therefore $Y$ is isomorphic to $Y_i^{i_k}$ over $S$ and this completes the proof.
\end{proof}
An immediate consequence of the theorem is then this version of \cite[Corollary 1.6]{li2023relative}.

\begin{cor}\label{cor_fincontgen}
   Let $f : (X, \Delta) \to S$ be a terminal $K$-trivial fiber space. Assume that good minimal models exist for effective klt perturbations of the very general fibre $(X_s,\Delta \vert_{X_s})$. If there exists a rational polyhedral cone $P_\eta \subset \Eff(X_\eta)$ such that
    \[\Mov(X_\eta) \subset \psaut(X_\eta,\Delta_\eta) \cdot P_\eta,\]
    then the set $\enstq{Z}{X \to Z ~\text{is a contraction over}~ S}$ is finite. 
\end{cor}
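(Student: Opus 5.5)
The corollary should follow by chaining Theorem \ref{thm_existrpc} into Theorem \ref{thm_fincontmov}; there is essentially no new geometry to supply.

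\emph{Step 1.} The hypotheses of the corollary are exactly those of Theorem \ref{thm_existrpc}:
\begin{itemize}
    \item $f$ is a terminal $K$-trivial fiber space;
    \item good minimal models exist for effective klt perturbations of the very general fibre;
    \item $P_\eta \subset \Eff(X_\eta)$ is polyhedral, and in fact rational polyhedral here, with $\Mov(X_\eta) \subset \psaut(X_\eta,\Delta_\eta)\cdot P_\eta$.
\end{itemize}
Theorem \ref{thm_existrpc} therefore gives a rational polyhedral cone $Q \subset \Mov(X/S)$ with
\[\psaut(X/S,\Delta)\cdot\left(Q \cap N^1(X/S)_\Q\right) = \Mov(X/S)_\Q.\]
The statement of Theorem \ref{thm_existrpc} writes $\psaut(X,\Delta)$. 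In its proof this group is identified with $\psaut(X/S,\Delta)$ via \cite[Lemma 2.6]{li2023relative}, so the group acting is indeed the relative pseudoautomorphism group. This equality yields at once the inclusion $\Mov(X/S)_\Q \subset \psaut(X/S,\Delta)\cdot Q$.

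\emph{Step 2.} A terminal pair is in particular klt, so $f$ is a klt $K$-trivial fiber space. The good minimal model hypothesis on the very general fibre is the same one as before. Step 1 supplies the cone $Q$ required in Theorem \ref{thm_fincontmov}. Applying that theorem shows that the set of contractions $X \to Z$ over $S$ is finite, which is the claim.

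\emph{Main obstacle.} The only point needing care is bookkeeping: the cone produced by Theorem \ref{thm_existrpc} must have exactly the form Theorem \ref{thm_fincontmov} requires, namely a rational polyhedral cone inside $\Mov(X/S)$. We must also check that both theorems use the same group acting on $N^1(X/S)_\R$, which is the identification recalled in Step 1. Once this is verified, the corollary is immediate.
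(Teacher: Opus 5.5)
Your proposal is correct and matches the paper's proof: it obtains $Q \subset \Mov(X/S)$ with $\Mov(X/S)_\Q \subset \psaut(X/S,\Delta)\cdot Q$ from Theorem \ref{thm_existrpc} and then applies Theorem \ref{thm_fincontmov}. Your extra bookkeeping, namely that terminal implies klt and that the group in Theorem \ref{thm_existrpc} is the relative pseudoautomorphism group, is accurate and harmless.
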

\begin{proof}
    By Theorem \ref{thm_existrpc}, there exists a rational polyhedral cone $Q \subset \Mov(X/S) \subset \Eff(X/S)$ such that $\Mov(X/S)_\Q \subset \psaut(X/S,\Delta) \cdot Q$. The conclusion follows from Theorem \ref{thm_fincontmov}.
\end{proof}
Finally, as in \cite[Theorem 1.4]{li2023relative}, we are able to prove the finiteness of minimal models up to isomorphism.

\begin{thm}\label{thm_finmm}
    Let $f : (X, \Delta) \to S$ be a terminal $K$-trivial fiber space. Assume that good minimal models exist for effective klt perturbations of the very general fibre $(X_s,\Delta \vert_{X_s})$. If there exists a rational polyhedral cone $P_\eta \subset \Eff(X_\eta)$ such that
    \[\Mov(X_\eta) \subset \psaut(X_\eta,\Delta_\eta) \cdot P_\eta,\]
    then $(X,\Delta)$ has only finitely many minimal models over $S$, up to isomorphism.
\end{thm}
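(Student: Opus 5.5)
We follow the argument of \cite[Theorem 1.4]{li2023relative}, replacing its inputs by Theorem \ref{thm_existrpc} and Theorem \ref{thm_geopol}. Let $\phi : (X,\Delta) \dashrightarrow (Y,\Delta_Y)$ be a minimal model over $S$. Since $K_X+\Delta \equiv_S 0$ and $(X,\Delta)$ is terminal, the discrepancy inequality in Definition \ref{def_minmodel} shows that $\phi$ contracts no divisor. Since $Y$ is a minimal model, $\phi^{-1}$ also contracts no divisor. Hence $\phi$ is an isomorphism in codimension one, i.e.\ a small $\Q$-factorial modification over $S$.

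First, Theorem \ref{thm_existrpc} gives a rational polyhedral cone $Q \subset \Mov(X/S)$ with $\psaut(X/S,\Delta) \cdot (Q \cap N^1(X/S)_\Q) = \Mov(X/S)_\Q$. Apply Theorem \ref{thm_geopol} to $Q \subset \Eff(X/S)$. This yields a decomposition $Q = \bigcup_{i=1}^m Q_i$ and birational contractions $\phi_i : X \dashrightarrow X_i$ over $S$. By \cite[Lemma 2.3]{li2025relative}, we may take these to be isomorphisms in codimension one.

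Next, take $Y$ as above and choose an ample $\Q$-Cartier divisor $A_Y$ on $Y$ over $S$ such that $(Y,\Delta_Y + \epsilon A_Y)$ is klt. Set $A := \phi^{-1}_* A_Y$. Then $[A] \in \Mov(X/S)_\Q$, so there is $\tau \in \psaut(X/S,\Delta)$ with $\tau_*[A] \in Q_i$ for some $i$. By Lemma \ref{lem_effrel}, replace $\tau_* A$ by an effective $\Q$-linearly equivalent divisor $B$ over $S$ with $(X,\Delta+\epsilon B)$ klt; this is possible because $X$ is terminal and $\epsilon$ is small. By Theorem \ref{thm_geopol}(2), $\phi_i : (X,\Delta+\epsilon B) \dashrightarrow (X_i,\Delta_i+\epsilon B_i)$ is a good minimal model over $S$.

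On the other hand, $\phi \circ \tau^{-1} : X \dashrightarrow Y$ is small, and it sends $B$ to a divisor $\Q$-linearly equivalent over $S$ to the ample divisor $A_Y$. Therefore $Y \simeq \operatorname{Proj}_S R(X,B)$, and this ample model is also the ample model of $(X_i, \Delta_i+\epsilon B_i)$. Because $Y$ and $X_i$ are both $\Q$-factorial and $\phi_i \circ \tau \circ \phi^{-1}$ is small, $B_i$ is semiample with an isomorphism-in-codimension-one contraction $X_i \to Y$. A small birational morphism onto a $\Q$-factorial variety is an isomorphism, so $Y \simeq X_i$ over $S$. The map respects the boundaries, since $\tau$ preserves $\Delta$. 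Hence every minimal model is isomorphic to one of $X_1,\dots,X_m$.

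The main obstacle is the ample-model identification in the last step: one must make sure that the Proj construction over $S$ identifies $Y$ with $X_i$, and not merely with a contraction of $X_i$. Corollary \ref{cor_fincontgen} is a possible fallback, but it only bounds the contractions of $X$, so the argument should rely on the $\Q$-factoriality argument rather than on that corollary.
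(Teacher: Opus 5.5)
Your proof is correct. Its core matches the paper's: Theorem~\ref{thm_existrpc} and Theorem~\ref{thm_geopol} are used to move the strict transform of a relatively ample class on the model into some chamber $Q_i$ by an element of $\psaut(X/S,\Delta)$, and the model is then identified with $X_i$.

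You differ from the paper, which follows \cite{li2023relative} closely, in two places.

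\emph{Handling general minimal models.} The paper first bounds small $\Q$-factorial modifications. It then treats an arbitrary minimal model $\mu$ by factoring it, via \cite[Lemma 2.3]{li2025relative}, as a small modification $X \dashrightarrow Y'$ followed by a morphism $\nu : Y' \to Y$. It bounds $\nu$ using the finiteness of contractions (Corollary~\ref{cor_fincontgen}). You observe instead that, with the strict discrepancy inequality of Definition~\ref{def_minmodel}, every minimal model of a $K$-trivial fiber space is already small. This removes the second stage: under this definition $\nu$ would be a small projective morphism onto a $\Q$-factorial variety, hence an isomorphism. You should spell this step out, and terminality plays no role in it. On a common resolution $p : W \to X$, $q : W \to Y$, the divisor $p^*(K_X+\Delta)-q^*(K_Y+\Delta_Y)$ is $q$-exceptional and numerically trivial over $Y$. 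The negativity lemma then makes it zero, so all discrepancies agree and no $\phi$-exceptional divisor can satisfy the strict inequality.

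\emph{Identifying $Y$ with $X_i$.} The paper uses Kawamata's decomposition of small modifications into flops \cite[Theorem 1]{kawamata2008flops}. You use the ample model $\operatorname{Proj}_S R(X,B)$ together with the elementary fact that a small projective birational morphism onto a $\Q$-factorial variety is an isomorphism, which is more self-contained.

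\emph{What each route buys.} The paper's two-stage argument survives weaker notions of minimal model, such as non-$\Q$-factorial targets or a non-strict inequality. Yours exploits exactly the definition adopted here and is shorter.

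As in the paper, passing from semiampleness of $K_{X_i}+\Delta_i+\epsilon B_i$ to that of $B_i$, and comparing section rings, tacitly uses $K_{X_i}+\Delta_i \sim_{\R,S} 0$.
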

\begin{proof}
    We argue as in \cite[Theorem 1.4]{li2023relative}. By Theorem \ref{thm_existrpc}, there exists a rational polyhedral cone $Q \subset \Mov(X/S)$ such that
    \begin{equation}\label{eq_conemovdom}
        \Mov(X/S)_\Q \subset \psaut(X,\Delta) \cdot Q.
    \end{equation}
    Let $Q = \cup_{i = 1}^m Q_i$ be a decomposition of $Q$ in subcones as in Theorem \ref{thm_geopol} and denote by $\phi_i : X \dashrightarrow X_i$ for $i = 1, \dotsc, m$ the associated $\Q$-factorial birational contractions over $S$. By \cite[Lemma 2.3]{li2025relative}, for each $i$, there exist a $\Q$-factorial variety $Y_i$ projective over $S$, a small $\Q$-factorial modification $g_i : X \dashrightarrow Y_i$ over $S$ and a morphism $\nu_i : Y_i \to X_i$ over $S$ such that diagram
    \begin{center}
        \begin{tikzcd}
                        & X \arrow[rd, "\phi_i", dashed] \arrow[ld, "g_i"', dashed] &     \\
Y_i \arrow[rr, "\nu_i"] &                                                           & X_i
\end{tikzcd}
    \end{center}
    commutes. We first show that, up to isomorphism over $S$, there are only finitely many small $\Q$-factorial modifications of $X$. 
    
    Let $\sigma : X \dashrightarrow Y$ be a small $\Q$-factorial modification. Let $H$ be a relatively ample divisor on $Y$ over $S$ and denote by $H_X$ its strict transform on $X$. Note that $H_X \in \Mov(X/S)_\Q$. By (\ref{eq_conemovdom}) and the decomposition $Q = \cup_{i = 1}^m Q_i$, there exist $i \in \{1, \dotsc, m\}$ and $\tau \in \psaut(X,\Delta)$ such that $\tau_*H_X \in Q_i$. By construction of $Q_i$, the divisor $H_{X_i} := \phi_{i,*}\tau_*H_X$ is nef over $S$ on $X_i$. Consider the small $\Q$-factorial modification $\psi : Y \dashrightarrow Y_i$ given by the composition
    \[Y \overset{\sigma^{-1}}{\dashrightarrow} X \overset{\tau}{\dashrightarrow} X \overset{g_i}{\dashrightarrow} X_i.\]
    Note that $H_{X_i}$ is also the strict transform of $H$ by $\psi$. By \cite[Theorem 1]{kawamata2008flops}, $\psi$ can be decomposed as a sequence of flops. Since $H$ is ample, no such flop can exist, hence $Y \simeq X_i$ over $S$. This proves that there are only finitely many small $\Q$-factorial modifications of $X$ over $S$ up to isomorphism.
    
    Next, we show that $(X,\Delta)$ has finitely many minimal models over $S$ up to isomorphism. Let $\mu : (X,\Delta) \dashrightarrow (Y, \Delta_Y)$ be such a minimal model. By \cite[Lemma 2.3]{li2025relative}, there exist a $\Q$-factorial variety $Y'$ projective over $S$, a small $\Q$-factorial modification $\mu' : X \dashrightarrow Y'$ over $S$ and a morphism $\nu : Y' \to Y$ over $S$ such that the following diagram
    \begin{center}
        \begin{tikzcd}
                        & X \arrow[rd, "\mu", dashed] \arrow[ld, "\mu'"', dashed] &     \\
        Y' \arrow[rr, "\nu"] &                                                           & Y
        \end{tikzcd}
    \end{center}
    commutes. By the previous step, there exists $i \in \{1, \dotsc, m\}$ such that $Y' \simeq X_i$ over $S$. We therefore identify $\mu'$ to $g_i$. Since $g_i$ is an isomorphism in codimension one, we see that
    \[\Mov(Y_{i,\eta}) \subset \psaut(Y_{i,\eta}, \Delta_{Y_{i,\eta}}) \cdot P_{\eta,_i}\]
    where $P_{\eta,i}$ denotes the strict transform of $P_{\eta}$ on $Y_{i,\eta}$. By Corollary \ref{cor_fincontgen}, the set
    \[\enstq{Z}{Y_i \to Z ~\text{is a contraction over}~ S}\]
    is finite. Since $\nu$ belongs to this finite set, this shows that up to isomorphism over $S$, there are only finitely many minimal models of $(X,\Delta)$ over $S$.
\end{proof}
We now have all the tools we need to prove Theorem \ref{thm_C}.

\begin{proof}[Proof of Theorem \ref{thm_C}]
    The result is an immediate consequence of Theorem \ref{thm_limovconj} and Theorem \ref{thm_finmm}.
\end{proof}

\section{Product of good minimal models}\label{sect_prod}
In this section, we show that if (good) minimal models exist for effective klt perturbations of each factor of a product of $\Q$-factorial klt Calabi-Yau pairs $(X_i,\Delta_i)$ for $i=1,2$, then the same holds for the product, provided that all but possibly one factor have vanishing irregularity. This result is motivated by a key step in the proof of Theorem \ref{thm_A}. In that setting, the very general fibre of a $K$-trivial fibration is a quotient of a product $W = A \times \prod_i Y_i \times \prod_j S_j$, where $W$ has a boundary divisor $\Delta = \sum_j p_j^*\Delta_j$ such that the pairs $(S_j,\Delta_j)$ are klt Calabi-Yau pairs. Note that in this decomposition, the only factor with non-vanishing irregularity is the abelian variety $A$. To prove Theorem \ref{thm_A}, we will need to show that good minimal models exist for effective klt perturbations of $(W,\Delta)$. As we will see, this property holds for each factor of $W$, and the lemmas below explain how to deduce it for the product. The argument proceeds by induction on the number of factors of $W$.

\begin{lem}\label{lem_picsplit}
    Let $X_1$ and $X_2$ be projective complex varieties such that $h^1(X_2,\cO_{X_2}) = 0$. Denote by $p_1$ and $p_2$ the projections. The following properties hold.
    \begin{enumerate}
        \item The natural morphism 
        \[\Pic(X_1) \oplus \Pic(X_2) \to \Pic(X_1 \times X_2), ~(L_1,L_2) \mapsto p_1^*L_1 \otimes p_2^*L_2\]
        is an isomorphism. 
        
        \item The induced morphism
        \[N^1(X_1)_\R \oplus N^1(X_2)_\R \to N^1(X_1 \times X_2)_\R, ~([D_1],[D_2]) \mapsto p_1^*[D_1] + p_2^*[D_2]\]
        is also an isomorphism. Under this identification, we have 
        \[\Nef(X_1 \times X_2) = \Nef(X_1) \oplus \Nef(X_2).\]
    \end{enumerate}
\end{lem}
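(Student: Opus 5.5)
For (1), I would use the seesaw principle together with the vanishing $h^1(X_2,\cO_{X_2})=0$. Injectivity comes first. If $p_1^*L_1\otimes p_2^*L_2$ is trivial, restrict it to a slice $\{x_1\}\times X_2$; this shows $L_2\simeq\cO_{X_2}$. Restricting to $X_1\times\{x_2\}$ then gives $L_1\simeq \cO_{X_1}$.

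For surjectivity, take $L\in\Pic(X_1\times X_2)$ and fix a point $x_1\in X_1$. Set $L_2 := L\vert_{\{x_1\}\times X_2}$ and $M := L\otimes p_2^*L_2^{-1}$. The family $x\mapsto M\vert_{\{x\}\times X_2}$ defines a morphism from the connected variety $X_1$ to $\bPic(X_2)$ sending $x_1$ to the identity. Hence it lands in $\bPic^0(X_2)$. Since $h^1(X_2,\cO_{X_2})=0$, the tangent space of $\bPic(X_2)$ at the identity vanishes, so $\bPic^0(X_2)$ is a point. Therefore $M\vert_{\{x\}\times X_2}$ is trivial for every closed point $x$. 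By the seesaw theorem (using $h^0(X_2,\cO_{X_2})=1$ and cohomology and base change), $p_{1*}M=:L_1$ is a line bundle and $M\simeq p_1^*L_1$. If the varieties are only normal rather than smooth, I would phrase the argument through the Picard scheme, which exists by the theorem quoted in Section \ref{sect_groupscheme} since $X_2$ is projective and integral. This step, making the seesaw argument rigorous with $\bPic^0(X_2)$ trivial, is the only real input.

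For (2), tensoring the isomorphism of (1) with $\R$ gives surjectivity onto $N^1(X_1\times X_2)_\R$, since $N^1$ is a quotient of $\Pic\otimes\R$. It remains to check that numerical equivalence splits. Suppose $p_1^*D_1+p_2^*D_2\equiv 0$. Intersect with curves $C\times\{x_2\}$; since $p_2^*D_2$ restricts trivially to such curves, this gives $D_1\cdot C=0$ for all curves $C\subset X_1$. Symmetrically, intersecting with $\{x_1\}\times C'$ gives $D_2\equiv 0$. The same computation shows that the map is well defined on numerical classes, so it is an isomorphism.

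For the nef cones, the inclusion $\supseteq$ holds because pullbacks of nef classes are nef and sums of nef classes are nef. For $\subseteq$, let $p_1^*D_1+p_2^*D_2$ be nef. Restricting to $X_1\times\{x_2\}$, which is isomorphic to $X_1$, gives exactly $D_1$, so $D_1$ is nef; restricting to $\{x_1\}\times X_2$ gives $D_2$ nef. This is routine once (1) and the identification of $N^1$ are in place.
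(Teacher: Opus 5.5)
Your proof is correct and follows the paper's argument for (2) essentially verbatim: surjectivity comes from (1), injectivity from intersecting with $C_1\times\{x_2\}$ and $\{x_1\}\times C_2$, and the nef cone equality from the same slice restrictions. For (1), the paper only cites Colliot-Th\'el\`ene--Skorobogatov and remarks that the argument extends without smoothness, whereas you give a self-contained, smoothness-free proof: the family $x\mapsto M\vert_{\{x\}\times X_2}$ defines a morphism $X_1\to\bPic^0(X_2)$, this target is a point because $h^1(X_2,\cO_{X_2})=0$, and the seesaw theorem then gives $M\simeq p_1^*L_1$.
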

\begin{proof}
    Item (1) follows from \cite[Equation (4.25), p.123]{colliot2021brauer}. The same argument applies over $\C$ without assuming smoothness. For item (2), surjectivity follows from (1). To prove injectivity, assume that $\alpha := p_1^*[D_1]+p_2^*[D_2] = 0$ in $N^1(X_1 \times X_2)_\R$. Fix two closed points $x_1 \in X_1$ and $x_2 \in X_2$. For every irreducible curve $C_1$ on $X_1$, the projection formula yields the equality
    \[\alpha \cdot \left( C_1 \times \{x_2\}\right) = [D_1] \cdot C_1 = 0,\]
    hence $[D_1]=0$ in $N^1(X_1)_\R$. Similarly, intersecting with $\{x_1\}\times C_2$ for irreducible curves $C_2 \subset X_2$ gives $[D_2]=0$. This proves injectivity. The equality $\Nef(X_1 \times X_2)=\Nef(X_1) \oplus \Nef(X_2)$ follows from the same argument.
\end{proof}

\begin{cor}\label{cor_morisplit}
With the notation and assumptions of Lemma~\ref{lem_picsplit}, the natural morphism
\[N_1(X_1 \times X_2)_\R \to N_1(X_1)_\R \oplus N_1(X_2)_\R, \gamma \mapsto (p_{1*}\gamma, p_{2*}\gamma)\]
is an isomorphism and, under this identification, we have 
\[\overline{NE}(X_1 \times X_2) = \overline{NE}(X_1) \oplus \overline{NE}(X_2).\]
\end{cor}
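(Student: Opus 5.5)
The plan is to obtain the statement by duality from Lemma \ref{lem_picsplit}(2). The intersection pairing $N^1(Y)_\R \times N_1(Y)_\R \to \R$ is perfect for any projective variety $Y$. So $N_1(Y)_\R$ is identified with the dual of $N^1(Y)_\R$, and $\overline{NE}(Y)$ is identified with the dual cone of $\Nef(Y)$ by Kleiman's criterion.

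\emph{Step 1: the map is the transpose of the pullback.} Write $X := X_1 \times X_2$ and let
\[\Phi : N^1(X_1)_\R \oplus N^1(X_2)_\R \to N^1(X)_\R\]
be the isomorphism of Lemma \ref{lem_picsplit}(2). By the projection formula, for every $\gamma \in N_1(X)_\R$ we have
\[\Phi([D_1],[D_2]) \cdot \gamma = [D_1]\cdot p_{1*}\gamma + [D_2]\cdot p_{2*}\gamma.\]
Hence the map $\gamma \mapsto (p_{1*}\gamma, p_{2*}\gamma)$ is exactly the transpose of $\Phi$, once we use the identifications $N_1(Y)_\R \simeq N^1(Y)_\R^\vee$. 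Since $\Phi$ is an isomorphism, its transpose is an isomorphism as well.

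\emph{Step 2: identify the cones.} Under Kleiman duality, $\overline{NE}(X)$ is the dual cone of $\Nef(X)$. Lemma \ref{lem_picsplit}(2) gives $\Nef(X) = \Phi(\Nef(X_1) \oplus \Nef(X_2))$. Taking duals, and using that the transpose of $\Phi$ is the map of Step 1, the image of $\overline{NE}(X)$ is the dual cone of $\Nef(X_1) \oplus \Nef(X_2)$ inside $N_1(X_1)_\R \oplus N_1(X_2)_\R$.

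It remains to note that the dual of a direct sum of cones $C_1 \oplus C_2$, each containing $0$, is $C_1^\vee \oplus C_2^\vee$. Indeed, $(c_1,c_2)$ pairs nonnegatively with all of $C_1 \oplus C_2$ if and only if each $c_i$ pairs nonnegatively with $C_i$; for one direction test against $(x,0)$ and $(0,y)$. Applying this with $C_i = \Nef(X_i)$ and Kleiman duality on each factor gives $\overline{NE}(X_1) \oplus \overline{NE}(X_2)$.

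As a sanity check, one inclusion can also be seen directly: $p_{i*}$ sends effective curves to effective cycles, and the curves $C_1 \times \{x_2\}$ and $\{x_1\} \times C_2$ show surjectivity onto each summand.

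The only point needing care is that $X_1$ and $X_2$ may be singular. Kleiman's criterion and the perfectness of the pairing between $N^1$ (Cartier classes modulo numerical equivalence) and $N_1$ hold for arbitrary projective varieties, so no smoothness is needed. Beyond that, everything is formal linear duality.
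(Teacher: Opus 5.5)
Your proposal is correct and follows the same route as the paper, whose proof is the one-line remark that the corollary follows from Lemma~\ref{lem_picsplit} by duality. You have simply spelled that duality out: the projection formula shows the map is the transpose of the isomorphism $\Phi$, and Kleiman duality together with $(C_1 \oplus C_2)^\vee = C_1^\vee \oplus C_2^\vee$ identifies the cones.
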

\begin{proof}
    It follows directly from Lemma \ref{lem_picsplit} by duality.
\end{proof}

\begin{lem}\label{lem_prodklt}
    Let $(X_1, \Delta_1)$ and $(X_2,\Delta_2)$ be $\Q$-factorial klt Calabi-Yau pairs. Let $X := X_1 \times X_2$ and denote by $p_1 : X \to X_1$ and $p_2 : X \to X_2$ the projections. Let $\Delta := p_1^*\Delta_1 + p_2^*\Delta_2$. Then $(X,\Delta)$ is a $\Q$-factorial klt Calabi-Yau pair.
\end{lem}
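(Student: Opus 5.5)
We need to check three properties of $(X,\Delta)$: $\Q$-factoriality of $X$, the Calabi–Yau condition $K_X+\Delta\equiv 0$, and the klt property.

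\textbf{Calabi–Yau condition.} This is immediate. Since $K_X = p_1^*K_{X_1} + p_2^*K_{X_2}$, we get
\[K_X + \Delta = p_1^*(K_{X_1}+\Delta_1) + p_2^*(K_{X_2}+\Delta_2).\]
Numerical triviality is preserved by pullback: for a curve $C\subset X$, the projection formula gives $p_i^*L\cdot C = L\cdot p_{i*}C = 0$. Hence this divisor is $\R$-Cartier and $K_X+\Delta\equiv 0$.

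\textbf{Klt property.} I would use the standard fact that a product of klt pairs is klt (see \cite[Proposition 5.20]{Kollár_Mori_1998} or its proof). Take log resolutions $\mu_i : X_i' \to X_i$ of $(X_i,\Delta_i)$ and write
\[K_{X_i'} = \mu_i^*(K_{X_i}+\Delta_i) + \textstyle\sum_j a_{ij}E_{ij}, \qquad a_{ij}>-1.\]
Then $\mu = \mu_1\times\mu_2$ resolves $(X,\Delta)$, and the divisor $\sum_j E_{1j}\times X_2' + \sum_k X_1'\times E_{2k}$ has simple normal crossings. Its discrepancy coefficients are exactly the $a_{ij}$, all $>-1$. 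This is enough: klt can be tested on a single log resolution, since for a simple normal crossing boundary with coefficients $<1$ every further exceptional divisor has discrepancy $>-1$.

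\textbf{$\Q$-factoriality.} This is the main obstacle, because $\Q$-factoriality of a product is not automatic in general. Klt varieties have rational singularities, so $X_i$ is rational-singular. I would first reduce to $h^1(X_2,\cO_{X_2})=0$ for one factor, in line with the hypothesis of Lemma \ref{lem_picsplit} and the intended use in this section. In the application the only factor with non-vanishing irregularity is $A$, which is smooth. If that reduction is not available, I would instead argue via $\mathrm{Cl}(X_1\times X_2)$ and the Picard scheme.

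Under this reduction, the goal is to show
\[\mathrm{Cl}(X_1\times X_2) = p_1^*\mathrm{Cl}(X_1)\oplus p_2^*\mathrm{Cl}(X_2)\]
modulo torsion. The plan is to restrict a Weil divisor $D$ to the smooth loci $X_1^{\rm reg}\times X_2^{\rm reg}$, whose complement has codimension $\ge 2$. There I apply the splitting of Lemma \ref{lem_picsplit}(1) on suitable resolutions, using $h^1(\cO)=0$ (which is preserved by resolution because of rational singularities). This produces Weil divisors $D_1$ on $X_1$ and $D_2$ on $X_2$ with $D\sim p_1^*D_1+p_2^*D_2$ after pushing forward. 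Each $D_i$ is $\Q$-Cartier by $\Q$-factoriality of $X_i$, so $D$ is $\Q$-Cartier.

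The delicate point is justifying the splitting for $\Pic(X_1'\times X_2')$ with $X_2'$ a smooth model. The argument of \cite{colliot2021brauer} requires $\Pic^0(X_2')=0$, and this follows from $h^1(X_2',\cO)=0$. I would present this step carefully and invoke the irregularity assumption explicitly.
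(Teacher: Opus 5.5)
Your Calabi--Yau computation matches the paper's. Your klt argument is a correct, self-contained substitute for the paper's citation of \cite[Lemma 4.2]{wei2023hyperbolicity}: you compute discrepancies on the product $\mu_1\times\mu_2$ of log resolutions, whose boundary is again simple normal crossing with all coefficients $>-1$. The Koll\'ar--Mori proposition you cite concerns finite morphisms, not products, so it is your direct computation that does the work.

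\textbf{The gap is in $\Q$-factoriality.} The statement has no irregularity hypothesis, so nothing allows you to ``reduce'' to $h^1(X_2,\cO_{X_2})=0$. What you actually prove is the lemma under that extra assumption. Under it your sketch is sound: take the strict transform $\tilde D$ on $\tilde X_1\times\tilde X_2$, split its class in $\Pic$, push forward by $\mu=\mu_1\times\mu_2$ using $\mu_*(q_1^*L_1)=p_1^*\mu_{1*}L_1$, and use $\Q$-factoriality of the $X_i$. This restricted version does cover every use of the lemma in the paper, since Lemmas \ref{lem_goodminprod} and \ref{lem_mmpprod1} both assume $h^1(X_2,\cO_{X_2})=0$. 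But your fallback ``argue via $\mathrm{Cl}(X_1\times X_2)$ and the Picard scheme'' is not an argument. The general case is exactly where $\Q$-factoriality of products can fail. The paper handles it by citing \cite[Th\'eor\`eme 6.5]{boissiere2011produit}, with no assumption on $h^1$.

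Concretely, when both factors are irregular, $\Pic(\tilde X_1\times\tilde X_2)$ also contains divisorial correspondences, parametrized by $\Hom(\operatorname{Alb}(\tilde X_1),\bPic^0(\tilde X_2))$. You give no reason why their pushforwards to $X_1\times X_2$ are $\Q$-Cartier. Your method can be completed using the fact that klt singularities are rational:
\begin{itemize}
\item $\mu_2^*\colon\bPic^0(X_2)\to\bPic^0(\tilde X_2)$ is an isomorphism. Leray gives $H^1(X_2,\cO_{X_2})\cong H^1(\tilde X_2,\cO_{\tilde X_2})$, and $\mu_{2*}\cO^*_{\tilde X_2}=\cO^*_{X_2}$ gives injectivity.
\item The Albanese map of $\tilde X_1$ factors through $X_1$. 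It classifies the Poincar\'e bundle on $\tilde X_1\times\bPic^0(\tilde X_1)$, which is pulled back from $X_1\times\bPic^0(X_1)$.
\end{itemize}
Hence a normalized correspondence on $\tilde X_1\times\tilde X_2$ equals $\mu^*M$, where $M=(\bar\phi\times\id)^*\mathcal P$ for some $\bar\phi\colon X_1\to\bPic^0(X_2)$ and a Poincar\'e bundle $\mathcal P$ on $\bPic^0(X_2)\times X_2$. So $\mu_*\mu^*M=M$ is Cartier, and $D\sim p_1^*D_1+p_2^*D_2+M$ is $\Q$-Cartier. Without this step, or the citation, $\Q$-factoriality of $X$ is not established in the generality stated.
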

\begin{proof}
    Since $X_1$ and $X_2$ are $\Q$-factorial, $X$ is also $\Q$-factorial by \cite[Théorème 6.5]{boissiere2011produit}. Moreover, $(X,\Delta)$ is klt by \cite[Lemma 4.2]{wei2023hyperbolicity}. Finally, we have
    \[K_X + \Delta \sim p_1^*(K_{X_1}+\Delta_1) + p_2^*(K_{X_2}+\Delta_2),\]
    thus $(X,\Delta)$ is a Calabi-Yau pair. This completes the proof.
\end{proof}

\begin{lem}\label{lem_goodminprod}
    Let $(X_1, \Delta_1)$ and $(X_2,\Delta_2)$ be $\Q$-factorial klt Calabi-Yau pairs, where $\Delta_1$ and $\Delta_2$ are $\Q$-divisors. Assume that $h^1(X_2,\cO_{X_2}) = 0$. Let $X := X_1 \times X_2$, and denote by $p_1 : X \to X_1$ and $p_2 : X \to X_2$ the projections. Let $\Delta := p_1^*\Delta_1 + p_2^*\Delta_2$. If (good) minimal models exist for effective klt perturbations of $(X_1,\Delta_1)$ and $(X_2,\Delta_2)$, then the same holds for $(X,\Delta)$.
\end{lem}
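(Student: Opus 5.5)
Let $D$ be an $\R$-divisor on $X$ such that $(X,\Delta+D)$ is klt and $\kappa(X,K_X+\Delta+D)\ge 0$. Since $K_X+\Delta\equiv 0$, the plan is to decompose $D$ numerically along the two factors, run the MMP separately on each factor, and take the product of the resulting models. Because $K_X+\Delta\sim_\R p_1^*(K_{X_1}+\Delta_1)+p_2^*(K_{X_2}+\Delta_2)$, the divisor $K_X+\Delta+D$ is $\R$-linearly equivalent to an effective divisor $E$, and $E\equiv D$. By Lemma \ref{lem_picsplit} (using $h^1(X_2,\cO_{X_2})=0$), we may write
\[
D\sim_\R p_1^*D_1+p_2^*D_2 \pmod{\text{numerically trivial classes}}.
\]
Indeed, every line bundle on $X$ is of the form $p_1^*L_1\otimes p_2^*L_2$, so up to $\R$-linear equivalence we may take $D\sim_\R p_1^*D_1+p_2^*D_2$ exactly. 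The first step is to show that the $D_i$ can be chosen effective with $(X_i,\Delta_i+D_i)$ klt. For effectivity: $H^0(X,mE)=H^0(X_1,mD_1)\otimes H^0(X_2,mD_2)$ by Künneth, so $\kappa\ge0$ forces $\kappa(X_i,D_i)\ge 0$, and we replace each $D_i$ by a general effective member of $|mD_i|/m$ (for $\R$-divisors, decompose into $\Q$-parts first). For the klt condition: fix small $\epsilon>0$ and use
\[
(1-\epsilon)D+\epsilon\,(p_1^*D_1'+p_2^*D_2'),
\]
with general members $D_i'$; in the end it suffices to find \emph{some} klt effective perturbation $\R$-equivalent to $D$. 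Here we use that having a good minimal model depends only on the $\R$-linear class of $K+\Delta+D$, provided both pairs are klt (\cite{birkar2012existence}, or via a log resolution argument). This step is the main obstacle: one must ensure that $(X_i,\Delta_i+\delta D_i)$ is klt for general effective $D_i$ of small coefficient, and transfer the model back to $(X,\Delta+D)$ itself.

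Next, run the minimal model programs separately. Let $\phi_i:(X_i,\Delta_i+D_i)\dashrightarrow (Y_i,\Delta_{Y_i}+D_{Y_i})$ be (good) minimal models, which exist by hypothesis. Set $Y=Y_1\times Y_2$ and $\phi=\phi_1\times\phi_2$. By Lemma \ref{lem_prodklt} and the $\Q$-factoriality of products, $Y$ is $\Q$-factorial. The contraction $\phi$ extracts no divisors, and its exceptional divisors are the pullbacks $p_i^*E$ of $\phi_i$-exceptional divisors. Discrepancies satisfy $a(p_1^*E,X,\Delta+D)=a(E,X_1,\Delta_1+D_1)$, since products with a smooth generic point of the other factor preserve log discrepancies of such divisors. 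Hence the strict negativity condition holds for $\phi$.

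Finally, check nefness. The divisor $K_Y+\Delta_Y+D_Y\sim_\R p_1^*(K_{Y_1}+\Delta_{Y_1}+D_{Y_1})+p_2^*(K_{Y_2}+\Delta_{Y_2}+D_{Y_2})$ is nef by Lemma \ref{lem_picsplit}(2). Alternatively, it is a sum of pullbacks of nef divisors, so it is nef without needing that lemma; note that $h^1(Y_2,\cO_{Y_2})=0$ persists anyway, since klt birational contractions preserve $h^1(\cO)$. If each model is good, this divisor is the sum of pullbacks of semiample divisors, hence semiample, so $Y$ is a good minimal model. Conclude by applying Remark \ref{rmk_allgood} to identify the model obtained for the perturbed divisor with a (good) minimal model of $(X,\Delta+D)$.
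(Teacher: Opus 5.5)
Your overall architecture matches the paper's: split $D$ up to $\R$-linear equivalence into effective pieces on the factors, take minimal models $\phi_i$ of the factors, and use $\phi_1\times\phi_2$. Your negativity check is a correct and shorter substitute for the paper's product of common resolutions. It computes $a(G\times X_2,X,\Delta+B)=a(G,X_1,\Delta_1+D_1)$, and the analogous equality on $Y$, directly. The gap is in the reduction step, which you leave open and partly get wrong.

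You only use $K_X+\Delta\equiv 0$, and so you only get $E\equiv D$. But your Künneth identity $H^0(X,mE)=H^0(X_1,mD_1)\otimes H^0(X_2,mD_2)$ requires $mE\sim p_1^*(mD_1)+p_2^*(mD_2)$. That holds only if $K_X+\Delta\sim_\Q 0$, and the formula $K_X+\Delta\sim_\R p_1^*(K_{X_1}+\Delta_1)+p_2^*(K_{X_2}+\Delta_2)$ does not give it. You need the genuine input that a klt Calabi--Yau pair with $\Q$-boundary satisfies $K+\Delta\sim_\Q 0$; the paper cites \cite[V.~Corollary 4.9]{Nakayama2004}. With it, $D\sim_\R E\ge 0$. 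The $\R$-coefficients are then best handled as in the paper, rather than by ``decomposing $D_i$ into $\Q$-parts'': decompose $E$ into prime components, split a Cartier multiple of each via Lemma \ref{lem_picsplit}, and make both pieces effective by Künneth.

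Your proposed klt fix does not work. The divisor $(1-\epsilon)D+\epsilon(p_1^*D_1'+p_2^*D_2')$ is not of product form, so you cannot run the factorwise MMPs on it. In fact there is no obstacle here, and no generality is needed. Take any effective $D_i$ and set $B:=p_1^*D_1+p_2^*D_2$. For $0<\epsilon\ll 1$, the pairs $(X_i,\Delta_i+\epsilon D_i)$ and $(X,\Delta+\epsilon B)$ are klt, simply because the unperturbed pairs are klt and everything is $\R$-Cartier. Moreover $K_X+\Delta\sim_\Q 0$ gives $K_X+\Delta+\epsilon B\sim_\R\epsilon(K_X+\Delta+D)$.

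A (good) minimal model of $(X,\Delta+\epsilon B)$ is therefore also one of $(X,\Delta+D)$. Nefness, semiampleness and the discrepancy inequalities are preserved under positive rescaling and $\R$-linear equivalence. The paper checks the discrepancy part by writing $D-B$ as an $\R$-combination of principal divisors, whose push-forward and pull-backs are principal with the same functions. This transfer again uses $K_X+\Delta\sim_\R 0$. Remark \ref{rmk_allgood} is not the relevant tool for it: that remark only says that once one minimal model is good, all of them are.
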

\begin{proof}
    First, $(X,\Delta)$ is a $\Q$-factorial klt Calabi-Yau pair by Lemma \ref{lem_prodklt}. Let $D$ be an $\R$-divisor on $X$ such that $(X,\Delta + D)$ is klt and $\kappa(X,K_X + \Delta + D) \geq 0$. We reduce to the case where $D$ is effective. Since $K_{X} + \Delta \equiv 0$, we have
    \[K_X + \Delta \sim_{\Q} 0\]
    by \cite[V. Corollary 4.9]{Nakayama2004} (see also \cite[Theorem 0.1]{ambro2005moduli}). Thus $\kappa(X,K_X + \Delta + D) \geq 0$ implies that for some integer $m \geq 1$, the divisor $\lfloor mD \rfloor$ is linearly equivalent to an effective divisor $E_m$. Let $D_m := \frac{1}{m}(E_m + (mD - \lfloor mD \rfloor)) \geq 0$ so that $D \equiv D_m$. For every $0 < \epsilon \ll 1$, the pair $(X,\Delta + \epsilon D_m)$ is klt and
    \[K_X + \Delta + D \equiv \frac{1}{\epsilon}(K_X + \Delta + \epsilon D_m).\]
    Replacing $D$ by $D_m$, we can therefore assume that $D$ is effective by \cite[Lemma 3.6.8]{birkar2010existence}.
    
    We now claim that there exist effective $\R$-divisors $D_1$ on $X_1$ and $D_2$ on $X_2$
    such that
    \begin{equation}\label{eq_divsplit}
        D \sim_{\R} p_1^*D_1 + p_2^*D_2.
    \end{equation}
    Indeed, if $D = 0$, there is nothing to prove. Otherwise, write $D = \sum_i a_i D^i$ with $a_i \in \R_{\geq 0}$ and $D^i$ a prime divisor on $X$. Since $X$ is $\Q$-factorial, we can assume that the $D^i$ are Cartier. By Lemma \ref{lem_picsplit}, we have 
    \[\Pic(X) = p_1^*\Pic(X_1) \oplus p_2^*\Pic(X_2).\]
    For each $i$, there exist Cartier divisors $D^i_1$ on $X_1$ and $D^i_2$ on $X_2$ such that 
    \[D^i \sim p_1^*D^i_1 + p_2^*D^i_2.\]
    By the Künneth formula, we can assume that $D^i_1$ and $D^i_2$ are effective. Let $D_1 := \sum_i a_i D^i_1$ and $D_2 := \sum_i a_i D^i_2$. Then $D_1$ and $D_2$ are effective, and (\ref{eq_divsplit}) follows immediately. 
    
    Let 
    \[B := p_1^*D_1 + p_2^*D_2 \geq 0.\]
    Since the pairs $(X,\Delta)$, $(X_1, \Delta_1)$ and $(X_2,\Delta_2)$ are klt, we can choose $0 \leq \epsilon \ll 1$ such that the pairs 
    \[(X, \Delta + \epsilon D), ~(X,\Delta + \epsilon B), ~(X_1,\Delta_1 + \epsilon D_1) ~\text{and}~ (X_2,\Delta_2 + \epsilon D_2)\]
    are all klt. As $K_{X} + \Delta \sim_{\Q} 0$ (respectively, $K_{X_i} + \Delta_i \sim_{\Q} 0$ for $i = 1,2$), the inequality $\kappa(X,K_X + \Delta + D) \geq 0$ (respectively, $\kappa(X_i, K_{X_i} + \Delta_i + D_i) \geq 0$) remains valid if we replace $D$ by $\epsilon D$ (respectively, $D_i$ by $\epsilon D_i$). Moreover, we have
    \[K_X + \Delta + D \equiv \frac{1}{\epsilon}(K_X + \Delta + \epsilon D) ~\text{and}~ K_{X_i} + \Delta_i + D_i \equiv \frac{1}{\epsilon}(K_{X_i} + \Delta_i + \epsilon D_i),\]
    thus a (good) minimal model for $(X, K_X + \Delta + D)$ (respectively, for $(X_i, K_{X_i} + \Delta_i + D_i)$) exists if and only if a (good) minimal model exists for $(X,K_X + \Delta + \epsilon D)$ (respectively, for $(X_i, K_{X_i} + \Delta_i + D_i)$) by \cite[Lemma 3.6.8]{birkar2010existence}. After replacing $D$, $D_1$, and $D_2$ by $\epsilon D$, $\epsilon D_1$, and $\epsilon D_2$, we can therefore assume that $\epsilon = 1$. By assumption, there exist (good) minimal models 
    \[\phi_1 : (X_1, \Delta_1 + D_1) \dashrightarrow (X'_1, \Delta'_1 + D'_1) ~\text{and}~ \phi_2 : (X_2, \Delta_2 + D_2) \dashrightarrow (X'_2, \Delta'_2 + D'_2).\]
    Define
    \[X' := X'_1 \times X'_2 ~\text{and}~ \phi := \phi_1 \times \phi_2 : X \dashrightarrow X'.\]
    Denote by $p'_1 : X' \to X'_1$ and $p'_2 : X' \to X'_2$ the projections. Then 
    \[B' := \phi_{*}B = {p'_{1}}^{*}D'_{1} + {p'_{2}}^{*}D'_{2}\]
    satisfies
    \[K_{X'} + \Delta' + B' \sim {p'_{1}}^*(K_{X'_1} + \Delta'_{1} + D'_{1}) + {p'_{2}}^*(K_{X'_2} + \Delta'_{2} + D'_{2}),\]
    and $K_{X'} + \Delta' + B'$ is therefore nef (and semiample) on $X'$. 
    
    We claim that $\phi : (X, \Delta + B) \dashrightarrow (X', \Delta' + B')$ is a (good) minimal model. By the above discussion, it suffices to show that $a(F,X,\Delta + B) > a(F,X',\Delta' + B')$ for all $\phi$-exceptional divisors $F$. By \cite[Lemma 3.6.3]{birkar2010existence}, this is equivalent to $\phi$ being $(K_X + \Delta + B)$-negative. Since $\phi_1$ is $(K_{X_1} + \Delta_1 + D_1)$-negative by the same lemma, there exists a common resolution
    \begin{center}
        \begin{tikzcd}
                               & W_1 \arrow[ld, "q_1"'] \arrow[rd, "r_1"] &    \\
X_1 \arrow[rr, "\phi_1", dotted] &                                      & X'_1
    \end{tikzcd} ~~~~
    \end{center}
    and a divisor $E_1$ on $W_1$ such that 
    \[q_1^*(K_{X_1} + \Delta_1 + D_1) = r_1^*(K_{X'_1} + \Delta'_1 + D'_{1}) + E_1,\]
    where ${q_1}_*E_1 \geq 0$ and whose support contains the union of all $\phi_1$-exceptional divisors. Similarly, since $\phi_2$ is $(K_{X_2} + \Delta_2 + D_2)$-negative, there exists a common resolution
    \begin{center}
        \begin{tikzcd}
                               & W_2 \arrow[ld, "q_2"'] \arrow[rd, "r_2"] &    \\
X_2 \arrow[rr, "\phi_2", dotted] &                                      & X'_2
\end{tikzcd}
    \end{center}
    and a divisor $E_2$ on $W_2$ such that 
    \[q_2^*(K_{X_2} + \Delta_2 + D_2) = r_2^*(K_{X'_2} + \Delta'_2 + D'_{2}) + E_2,\]
    with ${q_2}_*E_2 \geq 0$ and whose support contains the union of all $\phi_2$-exceptional divisors. Let $W := W_1 \times W_2$. Then $W$ is a common resolution of $X$ and $X'$ and we obtain the following commutative diagram.
    \begin{center}
        \begin{tikzcd}
                             & W \arrow[ld, "q ~:=~ q_1 \times q_2"'] \arrow[rd, "r ~:=~ r_1 \times r_2"] &    \\
X \arrow[rr, "\phi", dotted] &                                                      & X'
\end{tikzcd}
    \end{center}
    Denote by $\pi_1 : W \to W_1$ and $\pi_2 : W \to W_2$ the projections. We have 
    \[q^*(K_X + \Delta + B) = r^*(K_{X'} + \Delta' + B') + E,\]
    where $E := \pi_1^*E_1 + \pi_2^*E_2$, and 
    \[q_* E = p_1^*q_{1*}E_1 + p_2^*q_{2*}E_2 \geq 0.\]
    It remains to show that the support of $q_*E$ contains every $\phi$-exceptional prime divisor. Let $F$ be such a divisor. Since $\codim_{X_1} (p_1(F)) \leq 1$ and $\codim_{X_2} (p_2(F)) \leq 1$, there are three possibilities:
    \begin{itemize}
        \item $p_1(F)$ is a divisor on $X_1$,
        \item $p_2(F)$ is a divisor on $X_2$,
        \item $p_1(F) = X_1$ and $p_2(F) = X_2$.
    \end{itemize}
    The third case cannot occur, otherwise $F$ would not be $\phi$-exceptional. We treat only the first case, as the second is analogous. If $G := p_1(F)$ is a divisor, then $F \subset p_1^{-1}(G) = G \times X_2$. As both are irreducible of the same dimension, we have $F = G \times X_2$. Since $F$ is $\phi$-exceptional, we deduce from $\phi(F) = \phi_1(G) \times X_2'$ that $G$ is $\phi_1$-exceptional. Hence $G$ is contained in the support of $q_{1*}E_1$, and therefore 
    \[F = G \times X_2 \subset \Supp\left(p_1^*(q_{1*}E_1)\right) \subset \Supp(q_*E).\]
    This proves that $\phi : (X,\Delta + B) \dashrightarrow (X',\Delta' + B')$ is a (good) minimal model. 
    
    Finally, we show that $\phi$ is also a (good) minimal model for $(X,\Delta + D)$. Let $D' := \phi_*D$. Since $D \sim_{\R} B$, we have $D' \sim_{\R} B'$ and thus $K_{X'} + \Delta' + D'$ is nef (and semiample). It remains to check that $a(F,X,\Delta + D) > a(F,X',\Delta' + D')$ for all $\phi$-exceptional divisors $F$. Let $n\geq 1$, let $f_1, \dotsc, f_n \in \C(X)^\times$ be rational functions and let $a_1, \dotsc, a_n \in \R$ be such that 
    \[D = B + \sum_{i=1}^n a_i \operatorname{div}_{X}(f_i).\]
    Identifying $\C(X')$ with $\C(X)$ through the isomorphism induced by $\phi$, we have $D' = B' + \sum_{i=1}^n a_i\operatorname{div}_{X'}(f_i)$ by \cite[Proposition 1.4]{fulton1988intersection}. The discrepancy inequalities for $(X,\Delta + D)$ and $(X',\Delta' + D')$ therefore follow from those for $(X,\Delta + B)$ and $(X',\Delta' + B')$. This proves that $\phi : (X,\Delta + D) \dashrightarrow (X',\Delta' + D')$ is a (good) minimal model, and this completes the proof.
\end{proof}
The following lemma shows that, under the above assumptions, any divisorial contraction or flip appearing in a $(K_X+\Delta+D)$-MMP on the product $X = X_1 \times X_2$ is induced by a corresponding step of the MMP on one of the two factors.

\begin{lem}\label{lem_mmpprod1}
    Let $(X_1, \Delta_1)$ and $(X_2,\Delta_2)$ be $\Q$-factorial klt Calabi-Yau pairs, where $\Delta_1$ and $\Delta_2$ are $\Q$-divisors. Assume that $h^1(X_2,\cO_{X_2}) = 0$. Let $X := X_1 \times X_2$ and denote by $p_1 : X \to X_1$ and $p_2 : X \to X_2$ the projections. Let $\Delta := p_1^*\Delta_1 + p_2^*\Delta_2$ and let $D$ be an effective $\R$-divisor on $X$ such that $(X,\Delta + D)$ is klt. Let
    \begin{equation}\label{eq_stepX}
        \begin{tikzcd}
{X} \arrow[rr, "f", dashed] \arrow[rd, "\eta"'] &   & {X'} \arrow[ld, "\eta^+"] \\
                                                    & S &                               
\end{tikzcd}
    \end{equation}
    be either a divisorial contraction (in which case $X' = S$, $f = \eta$ and $\eta^+ = \id$) or a flip of a $(K_X+ \Delta + D)$-negative extremal ray. Then exactly one of the following two cases occurs.
    
    \begin{itemize}
        \item There exist an effective $\R$-divisor $D_1$ on $X_1$ such that $(X_1,\Delta_1 + D_1)$ is klt, a divisorial contraction or flip
        \begin{equation}\label{eq_step1}
            \begin{tikzcd}
            X_1 \arrow[rr, dashed, "f_1"] \arrow[rd, "\eta_1"'] & & X'_1 \arrow[ld, "\eta_1^+"] \\
            & S_1 &
            \end{tikzcd}
        \end{equation}
        associated to a $(K_{X_1} + \Delta_1 + D_1)$-negative extremal ray, and isomorphisms $S \simeq S_1 \times X_2$, $X' \simeq X'_1 \times X_2$ such that the diagram (\ref{eq_stepX}) is obtained from (\ref{eq_step1}) by base change along $\id_{X_2}$.
        
        \item There exist an effective $\R$-divisor $D_2$ on $X_2$ such that $(X_2,\Delta_2 + D_2)$ is klt, a divisorial contraction or flip
        \begin{equation}\label{eq_step2}
            \begin{tikzcd}
            X_2 \arrow[rr, dashed, "f_2"] \arrow[rd, "\eta_2"'] & & X'_2 \arrow[ld, "\eta_2^+"] \\
            & S_2 &
            \end{tikzcd}
        \end{equation}
        associated to a $(K_{X_2} + \Delta_2 + D_2)$-negative extremal ray, and isomorphisms $S \simeq X_1 \times S_2$, $X' \simeq X_1 \times X'_2$ such that the diagram (\ref{eq_stepX}) is obtained from (\ref{eq_step2}) by base change along $\id_{X_1}$.
    \end{itemize}
\end{lem}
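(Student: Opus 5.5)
Let $R \subset \overline{NE}(X)$ be the $(K_X+\Delta+D)$-negative extremal ray contracted by $\eta$. By Corollary \ref{cor_morisplit} we have $\overline{NE}(X) = \overline{NE}(X_1) \oplus \overline{NE}(X_2)$. An extremal ray of a direct sum of two cones must lie in one of the summands: if $v = v_1 + v_2$ generates $R$ with $v_1,v_2$ both nonzero, then $v_1, v_2 \in \overline{NE}(X)$ and $v_1+v_2 \in R$, so extremality forces $v_1, v_2 \in R$. This is impossible, since $v_1$ and $v_2$ lie in complementary subspaces. Hence $R$ lies either in $\overline{NE}(X_1) \oplus 0$ or in $0 \oplus \overline{NE}(X_2)$, and both cannot hold. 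Say $R = R_1 \oplus 0$, where $R_1$ is an extremal ray of $\overline{NE}(X_1)$. This dichotomy gives the ``exactly one'' in the statement; the other case is symmetric, using $h^1(X_2,\cO_{X_2})=0$ only through Lemma \ref{lem_picsplit}.

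Next, produce $D_1$. As in the proof of Lemma \ref{lem_goodminprod}, use Lemma \ref{lem_picsplit}(1) and Künneth to write $D \sim_\R p_1^*D_1 + p_2^*D_2$ with $D_1, D_2 \ge 0$. Restrict to a general slice $X_1 \times \{x_2\}$. Curves in that slice are exactly those whose classes lie in $\overline{NE}(X_1)\oplus 0$, so for $C_1$ with $[C_1] \in R_1$ we get $(K_X+\Delta+D)\cdot (C_1\times\{x_2\}) = (K_{X_1}+\Delta_1+D_1)\cdot C_1 < 0$. Thus $R_1$ is $(K_{X_1}+\Delta_1+D_1)$-negative. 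To ensure $(X_1,\Delta_1+D_1)$ is klt, I would instead take $D_1$ to be a small multiple $\epsilon D_1$, or use $D|_{X_1\times\{x_2\}}$ for general $x_2$; the latter is klt by inversion of adjunction/general slicing, using the fact that $\Delta|_{X_1 \times \{x_2\}} = \Delta_1$ for $x_2 \notin \Supp \Delta_2$. Since $K_{X_1}+\Delta_1 \equiv 0$, the sign of $R_1$ is unchanged by scaling, so the small multiple is the simplest choice. The cone and contraction theorems then give the contraction $\eta_1 : X_1 \to S_1$ of $R_1$.

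Now identify $\eta$ with $\eta_1 \times \id_{X_2}$. The morphism $\eta_1\times\id_{X_2}$ is a contraction with connected fibres, and it contracts exactly the curves whose classes lie in $R$: by Corollary \ref{cor_morisplit}, a curve is contracted iff its $X_2$-component class vanishes and its $X_1$-component lies in $R_1$. By uniqueness of the contraction of an extremal ray (rigidity, \cite[Prop.~1.14]{Kollár_Mori_1998}), $S \simeq S_1\times X_2$ compatibly. Divisorial versus small type is preserved, since the exceptional locus is $\operatorname{Exc}(\eta_1)\times X_2$. In the flipping case, let $f_1 : X_1 \dashrightarrow X_1'$ be the flip of $\eta_1$, whose existence follows from \cite{birkar2010existence} for klt pairs. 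Then $X_1'\times X_2 \to S_1\times X_2$ is small, and $K+\Delta'+D'$ is ample over the base because it is the pullback of the relatively ample $(K_{X_1'}+\Delta_1'+D_1')$ up to $\R$-linear equivalence. Uniqueness of flips, as $\operatorname{Proj}$ of the relative canonical ring, which is compatible with flat base change along $X_2 \to \Spec \C$, gives $X'\simeq X_1'\times X_2$.

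The main obstacle is the klt/perturbation bookkeeping for $D_1$. The splitting $D\sim_\R p_1^*D_1+p_2^*D_2$ need not preserve the klt condition, and the relative ampleness in the flip step uses $\R$-linear rather than actual equivalence. I would handle both by working throughout with $\epsilon$-multiples, exploiting $K_{X_i}+\Delta_i\sim_\Q 0$, as in Lemma \ref{lem_goodminprod}.
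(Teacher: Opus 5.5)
Your proposal is correct and follows essentially the same route as the paper. In both, the splitting $\overline{NE}(X)=\overline{NE}(X_1)\oplus\overline{NE}(X_2)$ forces $R$ into one summand, and $D$ is split as $D\sim_\R p_1^*D_1+p_2^*D_2$ and rescaled by a small $\epsilon$ for klt-ness (harmless since $K_{X_i}+\Delta_i\equiv 0$). Then $\eta$ is identified with $\eta_1\times\id_{X_2}$ by uniqueness of extremal contractions, and the flipping case follows from smallness, relative ampleness and uniqueness of flips. The only cosmetic difference is that you get $\R$-Cartierness and relative ampleness on $X_1'\times X_2$ directly from the $\R$-linear equivalence, whereas the paper uses $\Q$-factoriality of the product and Kleiman's criterion.
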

\begin{proof}
    As in the proof of Lemma \ref{lem_goodminprod}, there exist effective $\R$-divisors $D_1$ on $X_1$ and $D_2$ on $X_2$ such that
    \begin{equation}\label{eq_decomp}
        D \sim_{\R} p_1^*D_1 + p_2^*D_2.
    \end{equation}
    By Corollary \ref{cor_morisplit}, the natural map
    \[N_1(X)_\R \to N_1(X_1)_\R \oplus N_1(X_2)_\R, ~~\gamma \mapsto (p_{1*}\gamma, p_{2*}\gamma)\]
    is an isomorphism and, under this identification, we have $\overline{NE}(X) = \overline{NE}(X_1) \oplus \overline{NE}(X_2)$. In particular, if $\R_{\geq 0}(\gamma_1,\gamma_2)$ is an extremal ray of $\overline{NE}(X)$, then necessarily $\gamma_1 = 0$ or $\gamma_2 = 0$ since otherwise the decomposition
    \[(\gamma_1, \gamma_2) = (\gamma_1, 0) + (0,\gamma_2)\]
    would contradict extremality. Let $R \subset \overline{NE}(X)$ be the $(K_X + \Delta + D)$-negative extremal ray associated with (\ref{eq_stepX}). By the above discussion and the projection formula, $R$ is of the form
    \[R = R_1 \oplus 0 ~\text{or}~ R = 0 \oplus R_2\]
    where $R_1$ (respectively, $R_2$) is a $(K_{X_1} + \Delta_1 + D_1)$-negative extremal ray of $\overline{NE}(X_1)$ (respectively, a $(K_{X_2} + \Delta_2 + D_2)$-negative extremal ray of $\overline{NE}(X_2)$). By symmetry, we can assume that $R = R_1 \oplus 0$. By Lemma \ref{lem_prodklt}, the pair $(X,\Delta)$ is a $\Q$-factorial klt Calabi-Yau pair. Since $(X_1,\Delta_1)$ and $(X_2,\Delta_2)$ are klt, we can choose $0 < \epsilon \ll 1$ such that the pairs
    \[(X,\Delta + \epsilon D), ~(X_1, \Delta_1 + \epsilon D_1) ~\text{and}~ (X_2,\Delta_2 + \epsilon D_2)\]
    are all klt. In addition, $R$ remains $(K_X + \Delta + \epsilon D)$-negative, so after replacing $D$, $D_1$ and $D_2$ with $\epsilon D$, $\epsilon D_1$ and $\epsilon D_2$, we can assume that $\epsilon = 1$. Let $\eta_1 : X_1 \to S_1$ be the contraction associated with $R_1$. It follows that 
    \[\eta_1 \times \id_{X_2} : X \to S_1 \times X_2\]
    contracts exactly the curves whose numerical classes lie in $R = R_1 \oplus 0$. By the uniqueness of extremal contractions \cite[Theorem 4.1.5(ii)]{fujino2017foundations}, we obtain
    \begin{equation}\label{eq_identif}
        S \simeq S_1 \times X_2 ~\text{and}~ \eta \simeq \eta_1 \times \id_{X_2}.
    \end{equation}
    Moreover
    \[\operatorname{Ex}(\eta) \simeq \operatorname{Ex}(\eta_1) \times X_2,\]
    so $\eta$ is a divisorial contraction (respectively a flipping contraction) if and only if $\eta_1$ is. From now on, we identify $S$ with $S_1 \times X_2$ and $\eta$ with $\eta_1 \times \id_{X_2}$.
    
    If $\eta$ is divisorial, then we are done by taking $X'_1 := S_1$, $f_1 := \eta_1$ and $\eta_1^+ := \id_{S_1}$. Assume now that $\eta$ is a flipping contraction. We show that the diagram (\ref{eq_stepX}) is obtained, up to isomorphism, from (\ref{eq_step1}) by base change along $\id_{X_2}$. Since flips are unique, it suffices to verify that
    \begin{enumerate}
        \item[(i)] $\codim_{X'_1 \times X_2} \operatorname{Ex}(\eta_1^+ \times \id_{X_2}) \geq 2$,
        \item[(ii)] $K_{X'_1 \times X_2} + \Delta' + D'$ is $\R$-Cartier and $(\eta_1^+ \times \id_{X_2})$-ample, where $\Delta'$ and $D'$ are the birational transforms of $\Delta$ and $D$ on $X'_1 \times X_2$.
    \end{enumerate}
    Item (i) holds because $\operatorname{Ex}(\eta_1^+ \times \id_{X_2}) \simeq \operatorname{Ex}(\eta_1^+) \times X_2$ and $\eta_1^+$ is small. For (ii), since $X'_1$ is $\Q$-factorial by \cite[Proposition 3.5, Proposition 3.7]{fujino2017foundations}, the product $X'_1 \times X_2$ is also $\Q$-factorial by \cite[Théorème 6.5]{boissiere2011produit} and therefore $K_{X'_1 \times X_2} + \Delta' + D'$ is $\R$-Cartier. Since $K_{X_1} + \Delta_1 \equiv 0$, we also have $K_{X'_1} + \Delta'_1 \equiv 0$. Since $D \equiv_S p_1^*D_1$ by (\ref{eq_decomp}) and (\ref{eq_identif}), if $D_1'$ denotes the birational transform of $D_1$ on $X'_1$, it follows that
    \begin{equation}\label{eq_equivS}
        K_{X'_1 \times X_2} + \Delta' + D' \equiv_S {p'_1}^*(K_{X'_1} + \Delta'_1 + D'_1),
    \end{equation}
    where $p'_1 : X'_1 \times X_2 \to X'_1$ is the projection. Now $K_{X'_1} + \Delta'_1 + D'_1$ is $\eta_1^+$-ample, hence ${p'_1}^*(K_{X'_1} + \Delta'_1 + D'_1)$ is $(\eta_1^+ \times \id_{X_2})$-ample. We conclude that $K_{X'_1 \times X_2} + \Delta' + D'$ is $(\eta_1^+ \times \id_{X_2})$-ample by (\ref{eq_equivS}) and \cite[IV.4. Theorem 1]{kleiman1966toward}. This completes the proof.
\end{proof}
In particular, any $(K_X+\Delta+D)$-MMP on such a product is induced by log-MMPs on the two factors.

\begin{cor}\label{cor_productMMP}
Let $(X_1, \Delta_1)$ and $(X_2,\Delta_2)$ be $\Q$-factorial klt Calabi-Yau pairs, where $\Delta_1$ and $\Delta_2$ are $\Q$-divisors. Assume that $h^1(X_2,\cO_{X_2}) = 0$. Let $X := X_1 \times X_2$ and denote by $p_1 : X \to X_1$ and $p_2 : X \to X_2$ the projections. Let $\Delta := p_1^*\Delta_1 + p_2^*\Delta_2$ and let $D$ be an effective $\R$-divisor on $X$ such that $(X,\Delta + D)$ is klt. Let
\begin{equation}
    (X^0,\Delta^0 + D^0) = (X,\Delta + D) \overset{f^0}{\dashrightarrow} (X^1,\Delta^1 + D^1) \overset{f^1}{\dashrightarrow} (X^2,\Delta^2 + D^2) \overset{f^2}{\dashrightarrow} \cdots
\end{equation}
be a $(K_X + \Delta +D)$-MMP. Then there exist effective $\R$-divisors $D_1$ on $X_1$ and $D_2$ on $X_2$ such that $(X_1,\Delta_1 + D_1)$ and $(X_2,\Delta_2 + D_2)$ are klt, together with sequences of $\Q$-factorial projective klt pairs
\[(X_1^i,\Delta_1^i + D_1^i)_{i \geq 0}, ~(X_2^i,\Delta_2^i + D_2^i)_{i \geq 0}\]
and birational maps
\[f_1^i : (X_1^i,\Delta_1^i + D_1^i) \dashrightarrow (X_1^{i+1},\Delta_1^{i+1} + D_1^{i+1}), ~ f_2^i : (X_2^i,\Delta_2^i + D_2^i) \dashrightarrow (X_2^{i+1},\Delta_2^{i+1} + D_2^{i+1})\]
such that:

\begin{enumerate}
\item $(X_1^0,\Delta_1^0 + D_1^0)=(X_1,\Delta_1 + D_1)$ and $(X_2^0,\Delta_2^0 + D_2^0)=(X_2,\Delta_2 + D_2)$.

\item For every $i \geq 0$, exactly one of the maps $f_1^i$ and $f_2^i$ is the identity, and the other is a divisorial contraction or a flip associated with a negative extremal ray of the corresponding pair $(X_1^i,\Delta_1^i + D_1^i)$ or $(X_2^i, \Delta_2^i + D_2^i)$.

\item For every $i \geq 0$, there exists an isomorphism $\phi_i: X^i \xrightarrow{\sim} X_1^i \times X_2^i$ under which either $f^i \simeq f_1^i \times \id_{X_2^i}$ or $f^i \simeq \id_{X_1^i}\times f_2^i$.

\item For every $i \geq 0$, $K_{X_1^i} + \Delta_1^i \equiv 0$, $K_{X_2^i} + \Delta_2^i \equiv 0$ and $h^1(X_2^i,\cO_{X_2^i}) = 0$.
\end{enumerate}

Moreover, if we remove the identity maps from the sequences $(f_1^i)_{i \geq 0}$ and $(f_2^i)_{i \geq 0}$, the remaining maps form respectively a $(K_{X_1}+\Delta_1+D_1)$-MMP on $(X_1,\Delta_1+D_1)$ and a $(K_{X_2}+\Delta_2+D_2)$-MMP on $(X_2,\Delta_2+D_2)$ (possibly empty).
\end{cor}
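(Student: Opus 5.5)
The plan is an induction on the number $i$ of steps of the MMP, applying Lemma \ref{lem_mmpprod1} once per step. First, as in the proof of Lemma \ref{lem_goodminprod}, we fix effective $\R$-divisors $D_1$ on $X_1$ and $D_2$ on $X_2$ with $D \sim_\R p_1^*D_1 + p_2^*D_2$. After rescaling $D$, $D_1$, $D_2$ by a small $\epsilon>0$ as in that proof, the pairs $(X_1,\Delta_1+D_1)$ and $(X_2,\Delta_2+D_2)$ are klt. Rescaling does not change the sequence of extremal rays, since $K_X+\Delta\equiv 0$, so the given MMP is also a $(K_X+\Delta+\epsilon D)$-MMP. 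For $i=0$ we set $X_j^0 = X_j$, $D_j^0 = D_j$, $\Delta_j^0=\Delta_j$ and $\phi_0=\id$. Items (1) and (4) at level $0$ are then the hypotheses.

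For the inductive step, assume $\phi_i : X^i \simeq X_1^i \times X_2^i$ has been constructed with properties (3) and (4) up to level $i$. We also carry along the auxiliary invariants that $(X_j^i,\Delta_j^i)$ is a $\Q$-factorial klt Calabi--Yau pair with $\Delta_j^i$ a $\Q$-divisor, that $\Delta^i$ corresponds to $p_1^*\Delta_1^i + p_2^*\Delta_2^i$, and that $D^i \sim_\R p_1^*D_1^i + p_2^*D_2^i$ (strict transforms). The map $f^i$ is a divisorial contraction or flip of a $(K_{X^i}+\Delta^i+D^i)$-negative extremal ray, and $(X^i,\Delta^i+D^i)$ is klt. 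So Lemma \ref{lem_mmpprod1} applies to $X_1^i\times X_2^i$, using $h^1(X_2^i,\cO)=0$. It says $f^i$ is the base change of a step $f_1^i$ on $X_1^i$ along $\id_{X_2^i}$, or of a step $f_2^i$ on $X_2^i$ along $\id_{X_1^i}$. We then set the other map to be the identity and define $\phi_{i+1}$ from the isomorphism $X^{i+1}\simeq X_1^{i+1}\times X_2^{i+1}$ given by the lemma. This gives (2) and (3).

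The main obstacle is that Lemma \ref{lem_mmpprod1} produces \emph{some} divisor on the factor. We need the extremal ray to be negative for the specific pair $(X_1^i,\Delta_1^i+D_1^i)$ that we carry. Inspecting its proof settles this: the ray is $R_1\oplus 0$, and by the projection formula $(K_{X^i}+\Delta^i+D^i)\cdot(\gamma_1,0) = (K_{X_1^i}+\Delta_1^i+D_1^i)\cdot\gamma_1$, using the $\R$-linear decomposition of $D^i$. So the decomposition we transport is exactly the one that works, and the rescaling by $\epsilon$ is done only once, at the start.

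Next we propagate the invariants. Strict transform commutes with the product structure, so $D^{i+1}\sim_\R p_1^*D_1^{i+1}+p_2^*D_2^{i+1}$ and $\Delta^{i+1} = p_1^*\Delta_1^{i+1}+p_2^*\Delta_2^{i+1}$. The factors remain $\Q$-factorial klt, by the standard MMP facts \cite[Propositions 3.5, 3.7]{fujino2017foundations} applied on the factor that moved. Numerical triviality $K_{X_j^{i+1}}+\Delta_j^{i+1}\equiv 0$ is preserved because a push-forward of a numerically trivial divisor under a step of the MMP stays numerically trivial. For divisorial contractions this follows from the negativity lemma; for flips it holds because they are isomorphisms in codimension one and numerically trivial classes are pullbacks from the base $S_j$. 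The vanishing $h^1(X_2^{i+1},\cO)=0$ is preserved because klt singularities are rational, so $h^1(\cO)$ is a birational invariant. This gives (4). The final assertion follows by discarding identities: the surviving maps on each factor are consecutive steps of negative extremal rays for the transported pairs, hence form a $(K_{X_j}+\Delta_j+D_j)$-MMP.
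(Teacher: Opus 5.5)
Your proposal is correct and follows the paper's proof. Both argue by induction on the number of MMP steps, applying Lemma \ref{lem_mmpprod1} to $X_1^i \times X_2^i$ at each step; numerical triviality of $K_{X_j^i}+\Delta_j^i$ is preserved because numerically trivial classes descend to the base of the contraction, and $h^1(\cO)=0$ is preserved because klt pairs have rational singularities.

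One difference is worth keeping. You fix the decomposition $D \sim_\R p_1^*D_1 + p_2^*D_2$ and the rescaling by $\epsilon$ once at the start, and you check via the projection formula that each contracted ray is negative for the transported pair $(X_1^i,\Delta_1^i+D_1^i)$. The paper's proof leaves this consistency implicit, and it is exactly what the final assertion needs: that the factor steps form an MMP for one fixed $D_j$.
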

\begin{proof}
    We construct the sequences $(X_1^i,\Delta_1^i + D_1^i)_{i \geq 0}$, $(X_2^i,\Delta_2^i + D_2^i)_{i \geq 0}$ and the maps $f_1^i$, $f_2^i$ by induction on $i$. Assume that for some $i \geq 0$ we have already constructed
    \[(X_1^j,\Delta_1^j + D_1^j), ~(X_2^j,\Delta_2^j + D_2^j), ~\phi_j ~\text{for}~ 0\leq j\leq i, ~\text{and}~ f_1^j, f_2^j ~\text{for}~ 0\leq j < i\]
    The isomorphism $\phi_i : X^i \xrightarrow{\sim} X_1^i \times X_2^i$ identifies the divisors $\Delta^i$ and $D^i$ with $\R$-divisors on $X_1^i \times X_2^i$ which we still denote by $\Delta^i$ and $D^i$ for simplicity. Apply Lemma \ref{lem_mmpprod1} to the klt pair $(X_1^i \times X_2^i, \Delta^i + D^i)$ and to the step $(X^i,\Delta^i + D^i) \dashrightarrow (X^{i+1},\Delta^{i+1} + D^{i+1})$. There are two cases, depending on whether the contracted extremal ray lies in the Mori cone of $X_1^i$ or in that of $X_2^i$. Since the arguments are identical, we treat only the first case. We obtain an effective $\R$-divisor $D_1^{i+1}$ on a normal projective $\Q$-factorial variety $X_1^{i+1}$ such that $(X_1^{i+1},\Delta_1^{i+1} + D_1^{i+1})$ is klt, together with a birational map
    \[f_1^i : (X_1^i,\Delta_1^i + D_1^i) \dashrightarrow (X_1^{i+1},\Delta_1^{i+1} + D_1^{i+1})\]
    which is either a divisorial contraction or a flip of a $(K_{X_1^i}+\Delta_1^i+D_1^i)$-negative extremal ray, and an isomorphism
    \[\phi_{i+1} : X^{i+1} \xrightarrow{\sim} X_1^{i+1} \times X_2^i\]
    under which $f^i$ corresponds to $f_1^i \times \id_{X_2^i}$. Set $X_2^{i+1} := X_2^i$, $D_2^{i+1} := D_2^i$, and define $f_2^i := \id_{X_2^i}$. This proves the first three items. For (4), by induction we have $K_{X_1^i} + \Delta_1^i \equiv 0$ and $K_{X_2^i} + \Delta_2^i \equiv 0$. For $\ell=1,2$, each pair $(X_\ell^i,\Delta_\ell^i + D_\ell^i)$ appearing in a $(K_{X_\ell}+\Delta_\ell+D_\ell)$-MMP still satisfies $K_{X_\ell^i}+\Delta_\ell^i \equiv 0$ by \cite[Theorem 3.7(4)]{Kollár_Mori_1998}. Since $(X_2^i,\Delta_2^i+D_2^i)$ and $(X_2^{i+1},\Delta_2^{i+1}+D_2^{i+1})$ are klt, they have rational singularities by \cite[Theorem 4.9]{fujino2017foundations}, hence $h^1(X_2^{i+1},\cO_{X_2^{i+1}}) = h^1(X_2^i,\cO_{X_2^i}) = 0$. This completes the proof.
\end{proof}
As an immediate consequence, termination of log-MMPs on the two factors implies termination of log-MMPs on the product.

\begin{cor}\label{cor_termprodgood}
Let $(X_1, \Delta_1)$ and $(X_2,\Delta_2)$ be $\Q$-factorial klt Calabi-Yau pairs, where $\Delta_1$ and $\Delta_2$ are $\Q$-divisors. Assume the following conditions.
    \begin{enumerate}
        \item $h^1(X_2,\cO_{X_2}) = 0$.
        \item For $i = 1,2$ and every $\R$-divisor $D_i$ on $X_i$ such that $(X_i,\Delta_i + D_i)$ is klt and $\kappa(X_i,K_{X_i} + \Delta_i + D_i) \geq 0$, any log-MMP for $(X_i,\Delta_i + D_i)$ terminates with a (good) minimal model.
    \end{enumerate}
Let $X := X_1 \times X_2$ and denote by $p_1 : X \to X_1$ and $p_2 : X \to X_2$ the projections. Let $\Delta := p_1^*\Delta_1 + p_2^*\Delta_2$ and let $D$ be an $\R$-divisor such that $(X,\Delta + D)$ is klt and $\kappa(X,K_X + \Delta + D) \geq 0$. Then any MMP for $(X,\Delta + D)$ terminates with a (good) minimal model.
\end{cor}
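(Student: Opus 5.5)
We use Corollary \ref{cor_productMMP} to reduce termination on the product to termination on the factors, and then Lemma \ref{lem_goodminprod}, or a direct argument, to see that the final model is (good) minimal.

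\emph{Reduction to effective $D$.} As in the proof of Lemma \ref{lem_goodminprod}, $K_X+\Delta\sim_\Q 0$. Hence $\kappa(X,K_X+\Delta+D)\ge 0$ gives an effective $D_m$ with $D\sim_\R D_m$. A $(K_X+\Delta+D)$-MMP is the same sequence of steps as a $(K_X+\Delta+D_m)$-MMP. Indeed, $\R$-linearly equivalent divisors have the same negative extremal rays, and the flips depend only on the numerical class relative to the contraction. So we may assume $D\ge 0$, or simply carry the $\R$-linear equivalence along the steps, since Lemma \ref{lem_mmpprod1} only uses effectivity through the splitting (\ref{eq_decomp}).

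\emph{Splitting the MMP.} Corollary \ref{cor_productMMP} gives effective $D_1,D_2$ with $(X_\ell,\Delta_\ell+D_\ell)$ klt. Every step $f^i$ is $f_1^i\times\id$ or $\id\times f_2^i$. Removing the identities yields a $(K_{X_1}+\Delta_1+D_1)$-MMP and a $(K_{X_2}+\Delta_2+D_2)$-MMP. Since $D_\ell\ge 0$ and $K_{X_\ell}+\Delta_\ell\sim_\Q 0$, we have $\kappa(X_\ell,K_{X_\ell}+\Delta_\ell+D_\ell)\ge 0$. By hypothesis (2), both factor MMPs are finite. At each step exactly one factor moves, so the MMP on $X$ has at most as many steps as the two factor MMPs combined, and it terminates.

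\emph{Identifying the output.} Let $X^N\simeq X_1^N\times X_2^N$ be the last model, on which no further $(K+\Delta^N+D^N)$-negative extremal ray exists. By Corollary \ref{cor_morisplit}, applicable because of item (4) of Corollary \ref{cor_productMMP}, we have $\overline{NE}(X^N)=\overline{NE}(X_1^N)\oplus\overline{NE}(X_2^N)$. Moreover, $D^N\equiv p_1^*D_1^N+p_2^*D_2^N$, and this relation propagates through the steps exactly as in (\ref{eq_equivS}). Thus $K_{X^N}+\Delta^N+D^N$ is nef if and only if both $K_{X_\ell^N}+\Delta_\ell^N+D_\ell^N$ are nef, which means both factor MMPs have ended. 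By hypothesis their outputs are (good) minimal models. By the final part of the proof of Lemma \ref{lem_goodminprod}, which handles the product of minimal models and the passage from $B=p_1^*D_1+p_2^*D_2$ to $D$ via $\R$-linear equivalence, $X^N$ is a (good) minimal model of $(X,\Delta+D)$. Alternatively, an MMP output is automatically a minimal model, and semiampleness follows from semiampleness on the factors via pullback.

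\emph{Main obstacle.} The delicate point is the stopping criterion. We must check that the MMP on $X$ cannot stop while a factor still has a negative ray, and cannot fail to continue when one exists. This is handled by the splitting of the Mori cone together with the numerical decomposition of $D^N$, so that a negative ray on a factor gives a negative ray $R_1\oplus 0$ on the product.
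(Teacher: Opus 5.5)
Your proposal is correct and follows the paper's route. Termination comes from Corollary \ref{cor_productMMP}, because the induced factor MMPs are finite by hypothesis (2), and the (good) minimal model property comes from Lemma \ref{lem_goodminprod}.

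The only difference is how goodness of the output is obtained. The paper combines Lemma \ref{lem_goodminprod} with Remark \ref{rmk_allgood}: once one good minimal model exists, every minimal model is good. You instead identify the output directly as the product of the factor outputs, by pushing forward the $\R$-linear equivalence $D \sim_\R p_1^*D_1 + p_2^*D_2$. If you do replace $D$ by $D_m$, rescale to $\epsilon D_m$ so the pair stays klt, as in Lemma \ref{lem_goodminprod}.
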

\begin{proof}
    Termination of any MMP for $(X,\Delta + D)$ follows directly from Corollary \ref{cor_productMMP}. If in addition, any MMP for effective klt perturbations of $(X_i,\Delta_i)$ terminates with a good minimal model for $i = 1,2$, then the same holds for $(X,\Delta)$ by Lemma \ref{lem_goodminprod} and Remark \ref{rmk_allgood}.
\end{proof}

\section{Proof of the main theorem}\label{sect_proof}
In this section, we prove Theorem \ref{thm_A} and Corollary \ref{cor_B}. We will need the following lemma.

\begin{lem}\label{lem_ggfiber}
    Let $f : X \to S$ be a fibration between normal quasi-projective varieties. Identifying $N^1(X_\eta)_\R$ to a vector subspace of $N^1(X_{\overline{\eta}})_\R$ through the natural injective morphism $N^1(X_\eta)_\R \to N^1(X_{\overline{\eta}})_\R$ of Lemma \ref{lem_lins}.(4), we have the following equalities.
    \begin{enumerate}
        \item $\Eff(X_\eta) = \Eff(X_{\overline{\eta}}) \cap N^1(X_\eta)_\R$.
        \item $\Mov(X_\eta) = \Mov(X_{\overline{\eta}}) \cap N^1(X_\eta)_\R$.
        \item $\clMov(X_\eta) = \clMov(X_{\overline{\eta}}) \cap N^1(X_\eta)_\R$.
    \end{enumerate}
\end{lem}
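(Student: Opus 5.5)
The idea is to reduce everything to flat base change of cohomology along the field extension $K := \C(S) \subset \overline{K} := \overline{\C(S)}$. For (1), if $D$ is an effective $\R$-divisor on $X_\eta$, then its pullback $D_{\overline{\eta}}$ is effective. This gives $\Eff(X_\eta) \subset \Eff(X_{\overline{\eta}}) \cap N^1(X_\eta)_\R$.

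For the reverse inclusion, take a class $\alpha \in N^1(X_\eta)_\R$ that is effective on $X_{\overline{\eta}}$. I would proceed in three steps.
\begin{enumerate}
\item Use that the cones in question are generated by classes of Cartier divisors. Write the effective representative over $\overline{K}$ as $\sum a_i E_i$ with prime divisors $E_i$, all defined over a finite Galois extension $L/K$.
\item Average over $\Gal(L/K)$. Since the class $\alpha$ is defined over $K$, the Galois group fixes it, and numerical classes are preserved by Galois conjugation. Hence $\alpha = \frac{1}{|G|}\sum_{\sigma} \sigma^*\bigl(\sum a_i E_i\bigr)$.
\item Observe that this averaged divisor is Galois-invariant, so it descends to an effective $\R$-divisor on $X_\eta$, namely a combination of the images of the $E_i$ under $X_L \to X_\eta$. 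Its numerical class agrees with $\alpha$ by the injectivity in Lemma \ref{lem_lins}.(4).
\end{enumerate}
A minor subtlety is that $\R$-divisors may have irrational coefficients. This is harmless, because the averaging is linear.

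For (2), the same two-directional argument applies to movable divisors. For a Cartier divisor $D$ on $X_\eta$, flat base change gives $H^0(X_{\overline{\eta}}, mD_{\overline{\eta}}) = H^0(X_\eta, mD) \otimes_K \overline{K}$. Base loci commute with field extension, so $\bsloc|mD_{\overline{\eta}}| = (\bsloc|mD|)_{\overline{K}}$, and codimension is preserved. Hence $D$ is movable if and only if $D_{\overline{\eta}}$ is, which gives the forward inclusion. (This is essentially \cite[Lemma 5.1]{li2023relative}, which was already used in Step 1 of Theorem \ref{thm_existrpc}.)

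For the reverse inclusion, a class in $\Mov(X_{\overline{\eta}}) \cap N^1(X_\eta)_\R$ is a positive combination of movable divisors defined over some finite Galois $L$. Galois averaging as above keeps each summand movable, since conjugates of movable divisors are movable. It yields a Galois-invariant movable $\R$-divisor. Writing it as a combination of norms $\sum_\sigma \sigma^* M$ with $M$ movable Cartier, each norm descends to a movable divisor on $X_\eta$. To see this, apply the base-change criterion over $L$: $\sum_\sigma \sigma^*M$ is the pullback of the norm divisor $N_{L/K}(M)$, and the latter is movable because $\bsloc|N(M)|$ is contained in the image of $\bigcup \bsloc|\sigma^*M|$. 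Then (2) follows.

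For (3), the inclusion $\subset$ follows from (2) by continuity of the linear injection $\mu$. For $\supset$, take $\alpha$ in $\clMov(X_{\overline{\eta}}) \cap N^1(X_\eta)_\R$. I would use that the Galois group of a large enough finite extension $L$ acts on $N^1(X_L)_\R$ through a finite group $G$ (classes over $\overline{K}$ are defined over finite extensions, and $N^1$ is finite-dimensional). The averaging projector $p = \frac1{|G|}\sum \sigma$ is continuous, maps $\Mov$ into the $G$-invariant part of $\Mov$, and fixes $\alpha$. Then $\alpha = \lim p(\alpha_n)$ with $\alpha_n \in \Mov$, and $p(\alpha_n)$ lies in $\Mov(X_\eta)$ by the argument for (2).

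The main obstacle is the identification of the $G$-invariant subspace of $N^1(X_L)_\R$ with $N^1(X_\eta)_\R$, i.e. that Galois-invariant classes come from $K$-rational classes. Remark \ref{rmk_incpic} warns that $\Pic(X_\eta) \to \bPic(X_\eta)(K)$ can fail to be surjective, with the obstruction lying in $\Brauer(K)$. This obstruction is torsion, so it disappears after tensoring with $\Q$. This suffices for $\R$-coefficients, and I would rely on it throughout.
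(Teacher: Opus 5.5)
Your proposal is correct. For (1) and (2) it is the paper's argument. You write the class as a positive combination of effective (resp.\ movable) Cartier divisors defined over a finite Galois extension $L/K$, replace each by its orbit sum $\sum_{\sigma}\sigma^*D_i$, which is Galois-invariant and descends to $X_\eta$, and use that the class is Galois-fixed. You also supply what the paper leaves to ``exactly the same way'': orbit sums of movable divisors are movable, and movability descends because $\bsloc|mD_{\overline{\eta}}|=(\bsloc|mD|)_{\overline{K}}$. Working directly with these Cartier divisors, as the paper does, is slightly cleaner than splitting into prime components, which need not be $\R$-Cartier.

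For (3) your route is genuinely different. The paper deduces (3) from (2) via the convex-geometry Lemma~\ref{lem_coneboundary}. Since $N^1(X_\eta)_\R$ meets the interior of $\Mov(X_{\overline{\eta}})$ (ample classes pull back to ample classes), taking closures commutes with intersecting with this subspace. You instead use the averaging projector $p$. It is continuous, fixes $\alpha$, and maps $\Mov(X_{\overline{\eta}})$ into $\mu(\Mov(X_\eta))$ by the descent step of (2). Then $\alpha=\lim p(\alpha_n)\in\mu(\clMov(X_\eta))$, because $\mu$ has closed image.

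What your route buys is that it needs no interior point and no boundary lemma. What it costs is that the Galois action on $N^1(X_{\overline{\eta}})_\R$ must factor through a finite quotient $\Gal(L/K)$. If some $\alpha_n$ needs divisors defined over a larger $L_n$, averaging over $\Gal(L_n/K)$ induces the same $p$ on $N^1$, so nothing changes.

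Finally, the ``main obstacle'' you single out never arises. Your averaging produces actual Galois-invariant Cartier divisors, which descend by Galois descent. The Brauer obstruction concerns invariant line bundle classes, not invariant divisors, so it plays no role. In fact your argument directly proves $N^1(X_{\overline{\eta}})_\R^{G}=\mu(N^1(X_\eta)_\R)$.
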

\begin{proof}
    We start with items (1) and (2). For the inclusion "$\subset$", it is sufficient to prove that the pullback of an effective divisor (respectively, of a movable divisor) on $X_{\eta}$ remains effective (respectively, movable) on $X_{\overline{\eta}}$. The case of movable divisors has been dealt with in \cite[Lemma 5.1]{li2023relative}, so we will focus on the case of effective divisors. Take an effective Cartier divisor $D$ on $X_\eta$. By the flat base change theorem \cite[III - Proposition 9.3]{hartshorne1977graduate}, we have
    \[H^0\left(X_{\overline{\eta}}, \cO_{X_{\overline{\eta}}}(D_{\overline{\eta}})\right) \simeq H^0\left(X_\eta, \cO_{X_\eta}(D)\right) \otimes_F \overline{F} \neq (0).\]
    Therefore, $D_{\overline{\eta}}$ is effective. For the reverse inclusion "$\supset$", we will only discuss the case of effective classes, since the case of movable classes can be proven in exactly the same way. Let $[D] = \sum a_i [D_i]$ be an element of $\Eff(X_{\overline{\eta}}) \cap N^1(X_\eta)_\R$, where the $a_i$ are positive real numbers and the $D_i$ are effective Cartier divisors on $X_{\overline{\eta}}$. Take a finite Galois extension $F'/F$ such that the $D_i$ comes from effective divisors on $X_{\eta, F'} := X_{\eta} \otimes_F F'$ and let $G := \Gal(F'/F)$. For each index $i$, we set 
    \[D_i' = \sum_{\sigma \in G} \sigma^*D_i.\]
    Note that the $D_i'$ are effective $\Gal(\overline{F}/F)$-invariant divisors on $X_{\overline{\eta}}$, therefore they come from effective divisors defined on $X_\eta$. In particular, we deduce that the $[D_i']$ lie in $\Eff(X_\eta)$. Since $[D]$ is $\Gal(\overline{F}/F)$-invariant, it follows that 
    \[[D] = \dfrac{1}{\abs{G}} \sum a_i [D_i'] \in \Eff(X_\eta),\]
    which proves the equality of items (1) and (2) in the statement of the lemma. Finally, the equality between the cones in item (3) can be deduced from the following lemma.
\end{proof}
\begin{lem}[Lemma 3.8 - \cite{bright2020finiteness}]\label{lem_coneboundary}
    Let $V$ be a real vector space, let $C \subset V$ be a convex cone, and let $S \subset V$ be a subspace having non-empty intersection with the interior of $C$. Then we have
    \[\partial_S(C \cap S) = \partial_V(C) \cap S.\]
\end{lem}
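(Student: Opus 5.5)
The claim is that taking the relative boundary in $S$ of the slice $C \cap S$ gives the same set as intersecting the boundary of $C$ in $V$ with $S$. I will show this by comparing interiors: $\partial_S(C\cap S) = \overline{C\cap S}^{S}\setminus (C\cap S)^{\circ_S}$ and $\partial_V(C)\cap S = (\overline{C}\setminus C^\circ)\cap S$. It then suffices to prove two equalities:
\begin{enumerate}
\item[(a)] $\overline{C\cap S} = \overline{C}\cap S$, closures in $S$ and in $V$ respectively;
\item[(b)] $(C\cap S)^{\circ_S} = C^\circ\cap S$.
\end{enumerate}
All work is in finite dimensions, so $S$ is closed in $V$ and closure in $S$ agrees with closure in $V$ for subsets of $S$.

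For (a), the inclusion $\subset$ is immediate. For $\supset$, fix $x_0\in C^\circ\cap S$, which exists by hypothesis, and take $x\in\overline{C}\cap S$. The standard convexity fact is that for $t\in(0,1]$ the point $(1-t)x+tx_0$ lies in $C^\circ$. Indeed, a small ball $B(x_0,r)\subset C$, combined with points of $C$ approximating $x$, shows that a ball around $(1-t)x+tx_0$ lies in $C$. These points also lie in $S$, since $S$ is a subspace. Letting $t\to0$ gives $x\in\overline{C\cap S}$.

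For (b), the inclusion $\supset$ holds because $C^\circ\cap S$ is open in $S$ and contained in $C\cap S$. For $\subset$, take $y$ in the relative interior of $C\cap S$ in $S$; if $y=x_0$ we are done. Otherwise, move slightly past $y$ along the line through $x_0$ and $y$, which stays in $S$. This gives a point $z=y+\delta(y-x_0)\in C\cap S$ for small $\delta>0$. Then $y=\frac{1}{1+\delta}z+\frac{\delta}{1+\delta}x_0$ is a convex combination with positive weight on the interior point $x_0$. By the same convexity fact, $y\in C^\circ$.

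Subtracting (b) from (a) yields $\partial_S(C\cap S)=\partial_V(C)\cap S$. The only real content is the segment lemma: for convex $C$, $x\in\overline{C}$ and $x_0\in C^\circ$, the half-open segment $(x,x_0]$ lies in $C^\circ$. I expect this to be the step needing care, and I would prove it with the explicit ball argument above. Convexity is essential here; the cone property plays no role.
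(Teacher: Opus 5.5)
Your proof is correct. The paper gives no proof of this lemma. It quotes the statement from \cite{bright2020finiteness}, where $C$ is assumed closed, and then asserts without argument in Remark~\ref{rmk_invariantcommute} that it holds for any convex cone with $S\cap C^\circ\neq\emptyset$.

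For closed $C$ your step (a) is vacuous, since both sides equal $C\cap S$, and only the interior comparison (b) has content. In the non-closed case, (a) is a genuine extra step that needs the interior point $x_0$. For instance, with $C=\{y>0\}\cup\{0\}\subset\R^2$ and $S$ the $x$-axis, both (a) and the lemma fail. So your segment-lemma argument supplies exactly the justification the remark omits.

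Step (a) is also precisely the form in which the paper uses the lemma. In \eqref{eq_movquot} it passes from $\Mov(W/G)=\Mov(W)\cap N^1(W/G)_\R$ to the corresponding equality of closed movable cones, and Lemma~\ref{lem_ggfiber}(3) does the same for $X_\eta$ inside $X_{\overline{\eta}}$. In both cases what is needed is $\overline{C\cap S}=\overline{C}\cap S$ for a movable cone $C$ that need not be closed. The hypothesis $S\cap C^\circ\neq\emptyset$ is supplied by the pullback of an ample class.

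Your observations that only convexity is used, and that finite-dimensionality enters only to make $S$ closed, are accurate. They are harmless here, since $V$ is always some $N^1$.
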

\begin{rmk}\label{rmk_invariantcommute}
    Lemma \ref{lem_coneboundary} is stated in \cite{bright2020finiteness} for \emph{closed} convex cones, but it remains true more generally for convex cones $C$ satisfying the assumption $S \cap C^\circ \neq \emptyset$.
\end{rmk}
To simplify the proof of Theorem \ref{thm_A}, we will need the following lemma, which is certainly well known.

\begin{lem}\label{lem_mov+ab}
Let $(X,\Delta)$ be a complex klt Calabi-Yau pair such that either $X$ is a surface, or $(X,\Delta) = (A,0)$ with $A$ an abelian variety. Then $\Mov^+(X) = \clMov^e(X)$.
\end{lem}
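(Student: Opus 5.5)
The plan is to prove the two inclusions $\clMov^e(X) \subset \Mov^+(X)$ and $\Mov^+(X) \subset \clMov^e(X)$ separately. First I would reduce to nef cones. If $X$ is a surface, a movable divisor has no fixed curve and is therefore nef, so $\clMov(X) = \Nef(X)$. If $X = A$ is abelian, every pseudoautomorphism is an automorphism, and every prime divisor $E$ is nef, since its translates $E + a$ move it off any given curve. Hence $\overline{\Eff(A)} = \Nef(A) = \clMov(A)$. In both cases the statement becomes $\operatorname{Conv}(\Nef(X)\cap N^1(X)_\Q) = \Nef(X) \cap \Eff(X)$.

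For $\Mov^+(X) \subset \clMov^e(X)$, since $\Eff(X)$ is a convex cone, it suffices to show that every rational nef class is effective. On a surface, let $D$ be a nef $\Q$-divisor and pick $0<\epsilon\ll1$ with $(X,\Delta+\epsilon D)$ klt; one may first replace $D$ by a general member of its numerical class if needed, or argue directly with log abundance for numerical classes. Since $K_X+\Delta \sim_\Q 0$ by \cite[V. Corollary 4.9]{Nakayama2004}, the divisor $K_X+\Delta+\epsilon D \equiv \epsilon D$ is nef. Log abundance for klt surface pairs then makes it semiample, so $[D]$ is effective. On an abelian variety, a rational nef class is, after passing to a multiple and up to algebraic equivalence, the pullback of an ample class under a quotient homomorphism $A \to B$ of abelian varieties. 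Such a class is effective.

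For $\clMov^e(X) \subset \Mov^+(X)$, take a class $[D] \in \Nef(X)\cap\Eff(X)$ and, by Lemma \ref{lem_effrel}, an effective representative $D = \sum a_i E_i$ with $a_i>0$ real. In the abelian case this is immediate: each prime divisor $E_i$ is a rational nef class, so $[D]$ is a nonnegative combination of rational nef classes. In the surface case the $E_i$ need not be nef, and I would proceed as follows. Choose $\epsilon$ with $(X,\Delta+\epsilon D)$ klt. Then $K_X+\Delta+\epsilon D \equiv \epsilon D$ is a nef $\R$-divisor, and abundance for $\R$-divisors on klt surface pairs gives that it is $\R$-semiample. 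Concretely, via Shokurov's rational polytope argument one writes $D \equiv \sum_j b_j M_j$ with $b_j>0$ and $M_j$ nef $\Q$-divisors, taking the $M_j$ in a rational polytope of boundaries around $D$ inside the nef locus. This puts $[D]$ in $\operatorname{Conv}(\Nef(X)\cap N^1(X)_\Q)$.

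The main obstacle is this last step: passing from an effective nef $\R$-divisor on a surface to a positive combination of rational nef classes. The clean tool is the $\R$-version of log abundance in dimension two, combined with the local rational polyhedrality of the nef locus near $D$ inside the finite-dimensional rational affine space of $\R$-divisors supported on $\Supp D$. Restricting to curves $C$ with $D\cdot C = 0$, which are finitely many numerical types by the cone theorem for $(X,\Delta+\epsilon D)$, makes the nef condition near $D$ a rational polyhedral one. Alternatively, I would cite Totaro's description of $\Nef^e$ for klt Calabi--Yau surface pairs \cite{totaro10conepairs}.
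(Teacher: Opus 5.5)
\paragraph{The gap.} It is in the surface case of $\Mov^+(X)\subset\clMov^e(X)$, i.e.\ in showing that every rational nef class is effective. You choose $\epsilon$ with $(X,\Delta+\epsilon D)$ klt and then apply log abundance. But log abundance needs $\epsilon D$ to be an effective boundary, and effectivity of $[D]$ is exactly what you are trying to prove. Your suggestion to ``replace $D$ by a general member of its numerical class'' presupposes that this class already contains an effective divisor, so the step is circular. What is really needed is a nonvanishing statement for nef classes not known to be effective. Your fallback, ``log abundance for numerical classes'', is generalised nonvanishing/abundance. That is a separate, nontrivial theorem, and it is precisely what the paper invokes for this inclusion (Lazi\'c--Peternell, Corollary C).

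\paragraph{A possible fix.} Riemann--Roch does most of the work. Take $m$ with $mD$ Cartier and $D\not\equiv 0$. Then $h^2(mD)=h^0(K_X-mD)=0$ for $m\gg 0$, since $(K_X-mD)\cdot H<0$ for an ample $H$. Moreover $\chi(mD)=\chi(\cO_X)+\frac{m^2}{2}D^2+\frac{m}{2}\Delta\cdot D$ with $\Delta\cdot D\ge 0$. This gives $h^0(mD)>0$ unless $D^2=\Delta\cdot D=0$ and $\chi(\cO_X)\le 0$. That irregular case (abelian, bielliptic, elliptic ruled surfaces) still needs its own argument. For the smooth rational surfaces actually used later, $\chi(\cO_X)=1$ and Riemann--Roch alone suffices.

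\paragraph{The rest of your argument.} The abelian case is fine and matches the paper. Effective implies nef (the paper uses the theorem of the square, you use translates). Your quotient-homomorphism argument for ``rational nef implies effective'' is the content of Bauer's lemma $\Eff(A)\cap N^1(A)_\Q=\Nef(A)\cap N^1(A)_\Q$, which the paper cites.

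The surface inclusion $\Nef^e(X)\subset\Nef^+(X)$, for which the paper cites Totaro, is correct in outcome but heavier than necessary. Your justification via curves with $D\cdot C=0$ would not by itself control nearby curves with small positive $D\cdot C$. A simpler argument works. If $D=\sum a_iE_i$ is nef with $a_i>0$, then for $t\in\R^n_{\ge 0}$ the divisor $\sum t_iE_i$ is nef if and only if $(\sum t_iE_i)\cdot E_j\ge 0$ for the finitely many $E_j$. This cuts out a rational polyhedral cone containing $(a_i)$. Hence $[D]$ is a nonnegative combination of classes of nef effective $\Q$-divisors, with no abundance needed.
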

\begin{proof}
    If $(X,\Delta)$ is a klt Calabi-Yau pair of dimension two, then $\clMov(X) = \Nef(X)$, so it suffices to show that $\Nef^+(X) = \Nef^e(X)$. The inclusion $\Nef^+(X) \subset \Nef^e(X)$ follows from \cite[Corollary C]{lazic2020generalised}, and the reverse inclusion $\Nef^e(X) \subset \Nef^+(X)$ follows from \cite[Theorem 4.1]{totaro10conepairs}.
    
    Assume now that $(X,\Delta) = (A,0)$ with $A$ an abelian variety. Every effective divisor on $X$ is semiample by the theorem of the square \cite[II.6, Corollary 4]{mumford1970abelian}, hence $\clMov(X) = \overline{\Eff}(X)$. Moreover, by \cite[Lemma 1.1]{bauer1998cone}, we have
    \begin{equation}\label{eq_bauer}
        \Eff(X) \cap N^1(X)_\Q = \Nef(X)\cap N^1(X)_\Q,
    \end{equation}
    so $\Nef(X) = \overline{\Eff}(X) = \clMov(X)$. The equality $\Mov^+(X) = \clMov^e(X)$ then follows from the fact that $\Nef^+(X) = \Nef^e(X)$. Indeed, since every effective divisor on $X$ is nef, we have
    \[\Nef^e(X) = \Eff(X) = \operatorname{Conv}(\Eff(X) \cap N^1(X)_\Q) = \operatorname{Conv}(\Nef(X)\cap N^1(X)_\Q) = \Nef^+(X).\]
    This completes the proof.
\end{proof}
We now have all the tools we need to prove Theorem \ref{thm_A} and Corollary \ref{cor_B}.

\begin{proof}[Proof of Theorem \ref{thm_A}]
    Recall that the very general fibre $(X_s, \Delta \vert_{X_s})$ is isomorphic to $(W/G, \Delta_{W/G})$ where $W = A \times \prod_{i=1}^p Y_i \times \prod_{j=1}^q S_j$ is a product of 
    \begin{itemize}
        \item [(i)] an abelian variety $A$,
        \item [(ii)] projective primitive symplectic varieties $Y_i$ with $\Q$-factorial terminal singularities such that $b_2(Y_i) \geq 5$, $\Mov^+(Y_i) \subset \Eff(Y_i)$, and for every klt pair $(Y_i,D_i)$, any $(K_{Y_i} + D_i)$-MMP terminates with a good minimal model,
        \item [(iii)] smooth rational surfaces $S_j$ underlying klt Calabi–Yau pairs $(S_j,\Delta_j)$.
    \end{itemize}
    Moreover, $G$ is a finite subgroup of $\Aut(W,\sum_{j=1}^q p_j^*\Delta_j)$ acting freely in codimension one, and $\Delta_{W/G}$ is the unique $\mathbb{Q}$-divisor on $W/G$ such that
    \[K_W + \sum_{j=1}^q p_j^*\Delta_j = \gamma^*(K_{W/G}+\Delta_{W/G}),\]
    where $\gamma : W \to W/G$ is the quotient morphism. We now check that the assumptions of Theorem \ref{thm_C} are satisfied. From now on, we identify the pair $(X_s, \Delta \vert_{X_s})$ with the pair $(W/G, \Delta_{W/G})$. 
    
    \vspace{0.2cm}
    
    \emph{Step 1.} We prove that good minimal models exist for effective klt perturbations of $(W/G, \Delta_{W/G})$.
    
    \vspace{0.1cm}
    
    \noindent We first establish the corresponding statement for $(W,\Delta_W)$, and then descend it to the quotient. As recalled in the proof of Lemma \ref{lem_mov+ab}, every effective divisor on an abelian variety is nef and semiample. In particular, if $D_A$ is an effective $\R$-divisor on $A$ such that $(A,D_A)$ is klt, then $(A,D_A)$ is already a good minimal model. By assumption, any log-MMP for an effective klt pair on each $Y_i$ terminates with a good minimal model. Likewise, \cite[Theorem 1.1]{fujino2012minimal} shows that any log-MMP on an effective klt perturbation of $(S_j,\Delta_j + D_j)$ terminates with a good minimal model. Corollary \ref{cor_termprodgood} therefore implies that for any $\R$-divisor $D_W$ on $W$ such that $(W, \Delta_W + D_W)$ is klt and $\kappa(W, \Delta_W + D_W) \geq 0$,
    \begin{equation}\label{eq_terminategmm}
        \text{any}~ (K_W + \Delta_W + D_W)\text{-MMP terminates with a good minimal model of}~ (W,\Delta_W + D_W).
    \end{equation}
    We now pass to the quotient. Since $G$ is finite and acts freely in codimension one, the quotient morphism $\gamma : W \to W/G = X_s$ is a quasi-étale Galois cover. Let $D_{W/G}$ be an $\R$-divisor on $W/G$ such that the pair $(W/G,\Delta_{W/G} + D_{W/G})$ is klt and $\kappa(W/G, K_{W/G} + \Delta_{W/G} + D_{W/G}) \geq 0$. By \cite[Proposition 3.3 and 3.4]{denisi2024mmp}, any $(K_{W/G}+\Delta_{W/G} + D_{W/G})$-MMP terminates with a minimal model $((W/G)^{m},\Delta_{W/G}^{m} + D_{W/G}^{m})$ (although these statements are formulated for Galois quasi-étale covers between normal projective varieties with numerically trivial canonical divisor, the same arguments apply to Calabi-Yau pairs). More precisely, the above MMP lifts to a $G$-equivariant $(K_W+\Delta_W + D_W)$-MMP, where $D_W = \gamma^*D_{W/G}$, terminating with a minimal model $(W^{m,G},\Delta_W^{m,G} + D_W^{m,G})$ together with a quasi-étale Galois cover $\gamma_{m} : W^{m,G} \to (W/G)^{m}$ such that
    \begin{equation}\label{eq_pullbackequiv}
        K_{W^{m,G}}+\Delta_W^{m,G} + D_W^{m,G} \sim_{\R} \gamma_m^*\left(K_{(W/G)^{m}}+\Delta_{W/G}^{m} + D_{W/G}^m\right).
    \end{equation}
    On the other hand, this $G$-equivariant MMP is dominated by an ordinary $(K_W + \Delta_W + D_W)$-MMP which terminates with a good minimal model $(W^{m},\Delta_W^{m} + D_W^{m})$ by (\ref{eq_terminategmm}), and there exists a projective birational morphism $f_m : W^{m} \to W^{m,G}$ such that
    \begin{equation}\label{eq_pullback}
        K_{W^{m}} + \Delta_W^{m} + D_W^{m} \sim_{\R} f_m^*(K_{W^{m,G}}+\Delta_W^{m,G} + D_W^{m,G}).
    \end{equation}
    The situation is summarized by the following diagram:
    \begin{center}
        \begin{tikzcd}
            {(W^{m},\Delta_W^{m} + D_W^{m})} \arrow[rr, "f_m"] &  & {(W^{m,G},\Delta_W^{m,G} + D_W^{m,G})} \arrow[rr, "\gamma_{m}"] &  & {\left((W/G)^{m},\Delta_{W/G}^{m} + D_{W/G}^{m}\right)}
        \end{tikzcd}
    \end{center}
    Since $K_{W^{m}} + \Delta_W^{m} + D_W^{m}$ is semiample, \eqref{eq_pullback} and the fact that $f_m$ is projective and birational imply that $K_{W^{m,G}}+\Delta_W^{m,G}+D_W^{m,G}$ is semiample. Then \cite[Lemma 2.6]{das2022log}, together with (\ref{eq_pullbackequiv}), shows that $K_{(W/G)^{m}}+\Delta_{W/G}^{m} + D_{W/G}^m$ is semiample. Thus good minimal models exist for effective klt perturbations of $(W/G,\Delta_{W/G})$.
    
    \vspace{0.1cm}
    
    \emph{Step 2.} We show that the movable cone conjecture holds for $W/G$, that $\Mov^+(W/G) \subset \Eff(W/G)$ and that $\Mov^+(W/G)$ is well-clipped.
    
    \vspace{0.1cm}

    \noindent The movable cone conjecture holds for each factor of $W$ by \cite[Theorem 0.1]{Prendergast-Smith2012}, \cite[Theorem 1.2]{lehn2024morrison}, \cite[Theorem 4.1]{totaro10conepairs} and Lemma \ref{lem_mov+ab}. By Lemma \ref{lem_picsplit} and \cite[Lemma 2.16]{gachet2025well}, the closed movable cone of $W$ is the direct sum of the closed movable cones of its factors; the same holds for the modified cone of $W$. Hence \cite[Corollary 2.18]{gachet2025well} implies that the movable cone conjecture holds for $W$. Moreover, by \cite[Lemma 3.5]{gachet2025well} and Example \ref{ex_wellclipped}, 
    \begin{equation}\label{eq_wellclipped}
        \text{the cone}~ \clMov(W) ~\text{is well-clipped.}
    \end{equation}
    Therefore, \cite[Theorem 1.6]{gachet2025well} shows that the movable cone conjecture also holds for $W/G$.
    
    We now prove that $\Mov^+(W/G) \subset \Eff(W/G)$ and that $\Mov^+(W/G)$ is well-clipped. By Lemma \ref{lem_mov+ab} and by assumption, the modified movable cone of each factor of $W$ is contained in its effective cone. Therefore
    \begin{equation}\label{eq_movWeff}
        \Mov^+(W) = \Mov^+(A) \oplus \bigoplus_{i=1}^p \Mov^+(Y_i) \oplus \bigoplus_{j=1}^q \Mov^+(S_j) \subset \Eff(W).
    \end{equation}
    By \cite[Lemma 2.8]{denisi2024mmp}, the pullback by $\gamma$ induces an isomorphism $N^1(W/G)_\R \simeq N^1(W)^G_\R$, where $N^1(W)^G_\R$ denotes the $G$-invariant subspace. Identifying $N^1(W/G)_\R$ with a subspace of $N^1(W)_\R$, it follows from the proof of \cite[Lemma 2.8]{denisi2024mmp} that
    \begin{equation}\label{eq_effquot}
        \Eff(W/G) = \Eff(W) \cap N^1(W/G)_\R.
    \end{equation}
    On the other hand, \cite[Theorem 1.1]{gomez2025stable} gives $\Mov(W/G) = \Mov(W) \cap N^1(W/G)_\R$ (see also the proof of \cite[Theorem 1.6]{gachet2025well}), and Lemma \ref{lem_coneboundary} then implies that 
    \begin{equation}\label{eq_movquot}
        \clMov(W/G) = \clMov(W) \cap N^1(W/G)_\R.
    \end{equation}
    Combining (\ref{eq_movWeff}), (\ref{eq_effquot}), and (\ref{eq_movquot}), we conclude that $\Mov^+(W/G) \subset \Eff(W/G)$. Moreover, (\ref{eq_wellclipped}) together with \cite[Proposition 4.5]{gachet2025well} implies that $\Mov^+(W/G)$ is well-clipped.
    
    \vspace{0.1cm}
    
    \emph{Step 3.} We show that the movable cone conjecture holds for $X_\eta$ and that $\Mov^+(X_\eta) \subset \Eff(X_\eta)$.
    
    \vspace{0.1cm}
    
    \noindent Let $F = \C(S)$ be the function field of $S$ and let $\overline{F}$ be an algebraic closure of $F$. Write $\Delta = \sum_i a_i D_i$. Applying \cite[Lemma 2.1]{vial2013algebraic} to $f : X \to S$ and to each $f\vert_{D_i} : D_i \to S$, it follows that for a very general point $s \in S$, there exists a field isomorphism $\phi : \C \to \overline{F}$ and an isomorphism of schemes $X_{\overline{\eta}} \to X_s$ over $\Spec(\phi)$ such that $\alpha^*(\Delta_s) = \Delta_{\overline{\eta}}$. More precisely, for each $i$, we have the following Cartesian diagrams.
    \begin{equation}\label{eq_cartfib}
        \begin{tikzcd}
(D_i)_{\overline{\eta}} \arrow[dd] \arrow[rr, "\simeq"] &         & ( D_i)_s \arrow[dd] \\
                                                             & \square &                          \\
X_{\overline{\eta}} \arrow[dd] \arrow[rr, "\simeq"]          &         & X_s \arrow[dd]           \\
                                                             & \square &                          \\
\operatorname{Spec}(\overline{F}) \arrow[rr, "\simeq"]       &         & \operatorname{Spec}(\C) 
\end{tikzcd}
    \end{equation}
    In particular, $\alpha$ induces an isomorphism between $\psaut(X_s, \Delta_s)$ and $\psaut(X_{\overline{\eta}}, \Delta_{\overline{\eta}})$. We deduce by Step 2 that $\clMov(X_{\overline{\eta}})$ is well-clipped, that the movable cone conjecture holds for $(X_{\overline{\eta}}, \Delta_{\overline{\eta}})$ and that
    \begin{equation}\label{eq_mov+effggen}
        \Mov^+(X_{\overline{\eta}}) \subset \Eff(X_{\overline{\eta}}).
    \end{equation}
    The movable cone conjecture for $(X_{\eta}, \Delta_{\eta})$ then follows from \cite[Theorem 1.6]{gachet2025well}. By Lemma \ref{lem_lins} and (\ref{eq_mov+effggen}), we also obtain $\Mov^+(X_{\eta}) \subset \Eff(X_{\eta})$.
    
    \vspace{0.1cm}
    
    \emph{Step 4.} We apply Theorem \ref{thm_C}.
    
    \vspace{0.1cm}
    
    \noindent The above discussion shows that there exists there exists a rational polyhedral cone $P_\eta \subset \Mov^+(X_\eta) \subset \Eff(X_\eta)$ such that
    \[\Mov(X_\eta) \subset \Mov^+(X_\eta) = \psaut(X_\eta, \Delta_\eta) \cdot P_\eta.\]
    The conclusion now follows from Theorem \ref{thm_C}.
\end{proof}

\begin{proof}[Proof of Corollary \ref{cor_B}]
    Note that the very general fibre $(X_s, \Delta_s)$ is of the form $(W/G,\Delta_{W/G})$, where
    \[W = A \times \prod_{i=1}^p Y_i \times \prod_{k=1}^\ell Y'_k \times \prod_{j=1}^q S_j,\]
    each $Y'_k$ is a projective irreducible holomorphic symplectic manifold of a known type, and
    \[G = \{\id_A\} \times \prod_{i=1}^p \{\id_{Y_i}\} \times \prod_{k=1}^\ell \pi_1(Z_k) \times \prod_{j=1}^q \{\id_{S_j}\}\]
    is a finite subgroup of $\Aut(W, \sum_{j=1}^q p_j^*\Delta_j)$. We now verify that the assumptions of Theorem \ref{thm_A} are satisfied. The only points requiring justification are:
    \begin{itemize}
        \item [(i)] the inclusions $\Mov^+(Y_i) \subset \Eff(Y_i)$ and $\Mov^+(Y'_k) \subset \Eff(Y'_k)$;
        \item [(ii)] the existence of good minimal models for effective klt pairs on each $Y_i$ and $Y'_k$.
    \end{itemize}
    Item (i) follows from \cite[Lemma 2.17]{denisi2022pseudo}, and item (ii) follows from the termination of flips on a projective irreducible holomorphic symplectic manifold by \cite[Theorem 1.2]{lehn2016deformations}, together with the SYZ conjecture, which is known for all known types by \cite[Theorem 1.5]{bayer2014mmp}, \cite[Theorem 1.3]{markman2014lagrangian}, \cite[Proposition 3.38]{yoshioka16bridgeland}, \cite[Theorem 7.2, Corollary 7.3]{mongardi2021monodromy}, \cite[Theorem 2.2]{mongardi2022birational} and \cite[Corollary 1.1]{matsushita17isotropic}. 

    More precisely, let $V$ be a projective irreducible holomorphic symplectic manifold, and let $D$ be an effective $\R$-divisor on $V$ such that $(V,D)$ is klt. Then any $(K_V+D)$-MMP on $(V,D)$ terminates with a minimal model $(V^{\min}, D^{\min})$, and each variety $V_i$ appearing at the $i$-th step admits a crepant resolution by an irreducible holomorphic symplectic manifold $T_i$ by \cite[Lemma 4.1]{lehn2016deformations}. We thus obtain the following diagram:
    \begin{center}
        \begin{tikzcd}
V = T_0 \arrow[dd, equal] \arrow[rr, dashed] &  & T_1 \arrow[rr, dashed] \arrow[dd] &  & T_2 \arrow[rr, dashed] \arrow[dd] &  & \dotsc \arrow[rr, dashed] \arrow[dd] &  & T_n \arrow[dd]                \\
                                                             &  &                                   &  &                                   &  &                                      &  &                               \\
V = W_0 \arrow[rr, dashed]                                   &  & V_1 \arrow[rr, dashed]            &  & V_2 \arrow[rr, dashed]            &  & \dotsc \arrow[rr, dashed]            &  & V_n =: V^{\min}
\end{tikzcd}
    \end{center}
    In particular, $V$ is birational to $T_n$, and hence they are deformation equivalent by \cite[Theorem 4.6]{huybrechts1999compact}. Consequently, if $V$ is of a known type, then so is $T_n$, and the SYZ conjecture holds for $T_n$. It follows that $(V^{\min},D^{\min})$ is a good minimal model of $(V,D)$, which completes the proof.
\end{proof}

\printbibliography
\Addresses

\end{document}